\documentclass[a4paper,12pt]{amsart}

\usepackage{color}
\usepackage{amsxtra}
\usepackage{graphicx}
\graphicspath{ {./}}
\usepackage{comment}
\usepackage{bbm} 
\usepackage{amsmath,amssymb,amsthm,graphicx,url}
\usepackage{hyperref}

\setlength{\textheight}{8in} \setlength{\oddsidemargin}{0.0in}
\setlength{\evensidemargin}{0.0in} \setlength{\textwidth}{6.4in}
\setlength{\topmargin}{0.18in} \setlength{\headheight}{0.18in}
\setlength{\marginparwidth}{1.0in}
\setlength{\abovedisplayskip}{0.2in}
\setlength{\belowdisplayskip}{0.2in}
\setlength{\parskip}{0.05in}

\newtheorem{thm}{Theorem}
\newtheorem{prop}{Proposition}[section]
\newtheorem{cor}[prop]{Corollary}
\newtheorem{lem}[prop]{Lemma}
\newtheorem{defi}[prop]{Definition}
\newtheorem*{rem}{Remark}

\newcommand{\Q}{\mathbb{Q}}
\newcommand{\eps}{\varepsilon}

\newcommand{\CIc}{{\mathcal C}^\infty_{\rm{c}} }

\newcommand{\C}{\mathbb C}
\newcommand{\N}{\mathbb N}

\newcommand{\R}{\mathbb R}
\newcommand{\Z}{\mathbb Z}

\newcommand{\Op}{\operatorname{Op}}
\newcommand{\Tr}{\operatorname{Tr}}
\newcommand{\supp}{\operatorname{supp}}

\newcommand{\cjg}{\operatorname{\langle}}
\newcommand{\cjd}{\operatorname{\rangle}}
\title{Quantum ergodicity for pseudo-Laplacians}
\author{Elie Studnia}
\begin{document}
\maketitle
\begin{abstract}
We prove quantum ergodicity for the eigenfunctions of the pseudo-Laplacian on surfaces with hyperbolic cusps and ergodic geodesic flows.
\end{abstract}

Quantum ergodicity states that for quantum systems with ergodic classical flow, almost all high-frequency eigenfunctions are equidistributed in phase-space. Quantum unique ergodicity corresponds to equidistribution of all high-frequency eigenfuctions. The main examples are given by 
compact Riemannian manifolds $(X,g)$ with ergodic geodesic flows, where one considers eigenfunctions of the Laplacian $\Delta_g$ associated to the metric, and negatively curved metrics are the typical models for ergodic geodesic flows. 

The first mathematical results in this direction are due to Schnirelman \cite{Sc}, and later by Zelditch \cite{Ze1} and Colin de Verdi\`ere \cite{CdV4}, who proved quantum ergodicity for closed manifolds with ergodic geodesic flows. In the case of manifolds with boundary, similar results were shown by G\'erard-Leichtnam \cite{GeLe} and Zelditch-Zworski \cite{ZeZw}.

In this work, we consider cases of non-compact manifolds, and the first examples one has in mind are surfaces with finite volume. In general, non-compactness often produces an essential spectrum for the Laplacian, and this is indeed the case for the simplest model of finite volume surfaces, namely hyperbolic surfaces realised as quotients $\Gamma\backslash \mathbb{H}^2$ of the 
hyperbolic plane by Fuchsian subgroups with a finite index. In that setting, there is however a way to get rid of this essential spectrum by a simple modification of the Laplacian, that is called \emph{pseudo-Laplacian}, introduced by Colin de Verdi\`ere \cite{CdV1, CdV2} (and related to the work 
Cartier-Hejhal \cite{CaHe}). This operator was very useful for obtaining a meromorphic extension of the Eisenstein series and the resolvent of the Laplacian, with important generalisation to the higher rank 
case by M\"uller \cite{Mu}. There is however a different route in the setting of hyperbolic surfaces with finite volume that was taken by Zelditch \cite{Ze2}, who proved a quantum ergodicity statement for the Laplacian, but it involves the contribution of the continuous spectrum through the Eisenstein series. The proof has been recently generalised by Bonthonneau-Zelditch \cite{BoZe} to variable curvature and all dimensions, while Jakobson \cite{Ja} and Luo-Sarnak \cite{LuSa} proved some quantum unique ergodicity in the case of the modular surface. 
The problem of quantum ergodicity for the eigenfunctions of the pseudo-Laplacian was first proposed by S. Zelditch \cite{Ze-Pr}. This means that in this paper, we deal with the case of the pseudo-Laplacian, that we denote by $\Delta_c$, which is defined as an unbounded operator on $L^2$,with domain $\mathcal{D}_c$ and that has discrete spectrum. Here, $\Op_h$ will denote a semiclassical quantization for compactly supported symbols, see section $1$. Our main result is the following quantum ergodicity statement for this operator:
\begin{thm}\label{main}
Let $X$ be a Riemannian surface with a finite number of constant curvature hyperbolic cusps such that the geodesic flow on $S^* X$ is ergodic.
Let $u_j \in {\mathcal D}_c$ be
an orthonormal family of eigenfuctions of $\Delta_c - \frac{1}{4}$
with eigenvalues $ \lambda_0^2 < \lambda_1^2 \leq \lambda_2^2 
\leq \cdots $, covering all the eigenvalues of $\Delta_c - \frac{1}{4}$ except a finite number of non-positive ones. Let $a \in S^0(T^* X)$ be compactly supported in space.  

Then, as $\lambda\to \infty$, we have 
$$\frac{ 1 } { N( \lambda ) } 
\sum_{ \lambda_j \leq \lambda} \left| 
\langle \Op_{h_j}( a ) u_j , u_j \rangle - 
\int_{ S^* X } a \right|^2 \longrightarrow 0,$$
where $ N ( \lambda ) = | \{ j: \lambda_j \leq \lambda \}|$ and $h_j = \lambda_j^{-1}$. 
\end{thm}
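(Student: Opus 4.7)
The plan is to follow the Schnirelman--Zelditch--Colin de Verdi\`ere variance strategy, adapted to the pseudo-Laplacian and the cusp geometry. Writing $a = \bar a + a_0$ with $\bar a = \int_{S^*X} a$, the statement reduces to
\[\frac{1}{N(\lambda)}\sum_{\lambda_j\le\lambda}\bigl|\langle\Op_{h_j}(a_0)u_j,u_j\rangle\bigr|^2\longrightarrow 0.\]
The core trick is to replace $a_0$ by its Birkhoff time average $a_{0,T} = \frac{1}{T}\int_0^T a_0\circ\phi_t\,dt$. Via an Egorov-type theorem for the unitary group $U_c(t) = e^{-it\sqrt{\Delta_c - 1/4}}$ on $\mathcal{D}_c$, one has $\langle\Op_{h_j}(a_0)u_j,u_j\rangle = \langle\Op_{h_j}(a_{0,T})u_j,u_j\rangle + O_T(h_j)$, and Cauchy--Schwarz then bounds the variance above by $\frac{1}{N(\lambda)}\sum_{\lambda_j\le \lambda}\langle\Op_{h_j}(|a_{0,T}|^2)u_j,u_j\rangle$. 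A local Weyl law would push this to $\int_{S^*X}|a_{0,T}|^2$, which tends to $0$ as $T \to \infty$ by the von Neumann mean ergodic theorem applied to $\phi_t$ on $S^*X$.

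Three ingredients are therefore required: (i) a global Weyl law $N(\lambda) \sim c\lambda^2$; (ii) an Egorov theorem $U_c(-t)\Op_h(a)U_c(t) = \Op_h(a\circ\phi_t) + O(h)$ for fixed time and compactly supported $a$; and (iii) a local Weyl law $\frac{1}{N(\lambda)}\sum_{\lambda_j\le\lambda}\langle\Op_{h_j}(b)u_j,u_j\rangle\to\int_{S^*X} b$. Item (i) and the compactly supported case of (iii) should follow from functional calculus and heat- or wave-trace methods, using that $\Delta_c$ coincides with $\Delta$ on functions supported in the compact part, while the cusp modification only serves to remove the continuous spectrum. For (ii), since the difference between $\Delta_c$ and $\Delta$ acts on the zero Fourier mode in the cusps while compactly supported symbols are spatially away from cusps, the usual symbol-calculus proof of Egorov should apply to the finite-time evolution after verifying that the commutator $[\Delta_c,\Op_h(a)]$ agrees with $[\Delta,\Op_h(a)]$ modulo lower-order terms on $\mathcal D_c$.

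The main obstacle is that the time-averaged symbol $a_{0,T}$ is \emph{not} compactly supported, because the geodesic flow carries mass into the cusps and orbits may perform arbitrarily long excursions. Extending the local Weyl law (iii) to such symbols forces one to control the mass of the eigenfunctions of $\Delta_c$ at arbitrary depth in each cusp. Here the pseudo-Laplacian's domain is precisely what makes the argument work: the condition forbids a constant Fourier mode in the cusps above the cut-off height $y_0$, and the remaining Fourier modes decay exponentially in $y$ by the cusp separation of variables. This should yield a cusp-tail bound of the form $\sum_{\lambda_j\le\lambda}\|u_j\|_{L^2(\{y>Y\})}^2 = o(N(\lambda))$ as $Y\to\infty$, uniformly in $\lambda$, which combined with the compactly supported local Weyl law would produce (iii) for the non-compactly-supported symbols $|a_{0,T}|^2$. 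Establishing this mass-in-cusps estimate uniformly in the eigenvalue parameter is the delicate analytic heart of the argument, and is where the specific structure of the pseudo-Laplacian must be exploited most carefully.
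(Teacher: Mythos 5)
Your overall strategy (time-average, Egorov, variance/local Weyl, ergodic theorem) matches the high-level scheme of the paper, but two of your concrete steps are genuinely flawed, and both are precisely the places where the paper has to work hardest.

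\textbf{The Egorov step.} You propose to prove Egorov for $U_c(t)=e^{-it\sqrt{\Delta_c-1/4}}$, arguing that $[\Delta_c,\Op_h(a)]$ should coincide with $[\Delta,\Op_h(a)]$ modulo lower order. This does not hold. The pseudo-Laplacian is not a differential or pseudodifferential operator; its domain $\mathcal{D}_c$ carries the constraint that kills the zero Fourier mode past $r=c$, and a generic $\Op_h(a)$ with $\pi(\supp a)$ meeting a neighbourhood of the singular circle does not preserve $\mathcal{D}_c$. Even if one restricted $\supp a$ away from $r=c$, the wave group $U_c(t)$ for finite $t$ propagates singularities across $\{r=c\}$, where it behaves like a boundary problem for the zero mode and does not quantize the free geodesic flow. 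For that reason the paper does \emph{not} use $U_c(t)$: it propagates via the genuine Laplacian $\Delta$, writes $(h_j^2\Delta-1)u_j = h_j^2 Q_j\delta_c + \tfrac{h_j^2}{4}u_j$, and pays for the fact that $u_j$ is only an approximate eigenfunction of $\Delta$ with the singular correction $Q_j\delta_c$. Making this pay requires (a) restricting the propagated symbol to the full-measure set $\Sigma_T$ of points whose orbit stays away from $T^*X_0\cap\{\eta=0\}$ (so that $\Op_h(a\circ\Phi^t)\delta_c$ is $O(h^\infty)$), and (b) proving the nontrivial $\ell^2$ bound $\sum_j|Q_j|^2 h_j^4<\infty$, obtained by an exact oscillatory-integral computation of the zero mode of $u_j$. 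None of this is captured by a commutator substitution $\Delta_c\rightsquigarrow\Delta$.

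\textbf{Reduction to zero average.} Writing $a=\bar a+a_0$ does not reduce the problem, because $\bar a$ is a constant on $T^*X$ and hence not a compactly supported symbol; $a_0=a-\bar a$ is therefore also not compactly supported, and $\Op_h(\cdot)$ is not defined for it in the framework of the theorem. On a compact manifold one quantizes $1$ and $\langle\Op_h(1)u_j,u_j\rangle=1$ cancels the average; here that identity is unavailable. This is exactly the content of the paper's Section 5: one must manufacture compactly supported symbols $1-\phi_R^8$ with average close to $1$ and with $\limsup_\lambda M(1-\phi_R^8,\lambda)\le Ce^{-R/4}$, and then replace $a$ by $a-\tfrac{\alpha}{1-I_R}(1-\phi_R^8)$. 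Establishing the cusp estimate involves microlocalizing each Fourier mode $v_{j,k}$ near the characteristic variety $\xi^2+(2\pi kh)^2e^{2r}=\omega_j^2$, counting modes, and Hilbert--Schmidt bounds for cutoff operators; the "exponential decay of nonzero modes" you invoke is only available past the turning point $r\sim\log(\lambda_j/|k|)$ and does not directly give a uniform-in-$\lambda$ tail estimate. Incidentally, for fixed $T$ the symbol $a_{0,T}$ \emph{is} compactly supported (the flow moves $\pi(\supp a)$ only a bounded distance in time $T$), so the issue you flag with the time average is not real; the actual obstruction is the missing quantization of the constant, as above.

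Finally, your "variance bound" step would also have to account for the fact that $u_j$ is not an exact eigenfunction of $h_j^2\Delta-1$; the paper's ellipticity-in-the-mean estimate (Section 3) again reduces to bounding $\|\Op_h(b)\delta_c\|_{L^2}$ for $S^{-2}$ symbols and to the $\ell^2$-control of $Q_j$. This is where the "precise control of eigenfunctions of the pseudo-Laplacian" announced in the introduction is used, and it is absent from your outline.
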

For a precise review of the geometry of the considered Riemannian manifolds, we refer to Section \ref{notations}, while for the definition of the pseudo-Laplacian this is done in Section \ref{pseudo}. 
There are very natural examples of such manifolds given by negatively curved surfaces with finite volume and hyperbolic cusps.

Let us make several remarks about the Theorem. First, by a standard argument (see for instance 
\cite[Section 15.5]{EZ}) Theorem \ref{main} implies that 
\[ \langle \Op ( a ) u_j , 
u_j \rangle \rightarrow \int_{S^* X }a  \] 
for a sequence of density one, when $a$ has compact support. Moreover, since we are only interested in quantizing symbols with a compact support in the space variable, we can use a standard quantization procedure, see for instance \cite[Section 14.2]{EZ}. That means however that the estimates are not uniform far in the cusp.

In the same geometric setting, we also mention that there are other works by Dyatlov \cite{Dy} and Bonthonneau \cite{Bo} on the microlocal limits of 
non-$L^2$ eigenfunctions of the Laplacian but with complex eigenvalues, where one instead get a sort of ``quantum unique ergodicity".

For simplicity, the proof will be presented in the case where there is one cusp, the argument being the same with several cusps. The method of proof follows the scheme from \cite{ZeZw} and has two steps:\\
1) a pointwise ``ellipticity bound" that states that the eigenfunctions are microlocalized on the cosphere bundle. This implies that in the limit $\lambda \rightarrow \infty$, $$M(a,\lambda)^2 := \frac{ 1 } { N( \lambda ) } 
\sum_{ \lambda_j \leq \lambda} \left| 
\langle \Op_{h_j}( a ) u_j , u_j \rangle - 
\int_{ S^* X } a \right|^2$$ is controlled by $\|a_{|S^*X}\|_{L^2}^2$.

\noindent 2) Taking a symbol with average zero, we propagate it by the geodesic flow to get a new symbol that is small on the cosphere bundle (by the $L^2$ ergodic theorem); we have to prove that this does not modify much $M(a,\lambda)$: this is the point of the ``flow invariance" theorems. 

We stress that working with a pseudo-Laplacian entails new difficulties, as compared to the compact setting. 
For the first step, since we are working with a pseudo-Laplacian, the pointwise ellipticity bound (and the subsequent microlocalization) works only outside the singular circle, and we need to prove that the needed correction is small enough. This requires a precise control of the eigenfunctions of the pseudo-Laplacian. \\
For the second step, it is important to notice that the eigenfunctions we are interested in are not eigenfunctions of the propagator we are using for the proof. 
We are able to prove that $M(a,\lambda)$ does not change much when replacing $a$ by $a\circ \Phi^t$ if $\Phi^t$ is the geodesic flow, but we have to assume that the symbol $a$ is supported quite far away from the singular circle. Since the admissible support has full measure, the $L^2$ control of $\limsup M(a,\lambda)$ we still get at step $1$ leads to the same result.\\ 
Finally, \cite{ZeZw} work with a compact manifold, and $M(1,\lambda)$ thus vanishes. 
This is not the case for us since we only use symbols with a compact support in space. 
Our proof has a third step which consists in finding symbols $a$ with average close to $1$ such that $\limsup M(a,\lambda)$ is arbitrarily close to zero. For that purpose, we shall prove that the modes of the eigenfunctions of the pseudo-Laplacian are microlocalized in the cusp.  

\textbf{Acknowledgements}
This work was written during a visit at UC Berkeley, under the direction of S. Dyatlov and M. Zworski. We thank this institution and S. Dyatlov and M. Zworski for their help, suggestions and comments. We also thank C. Guillarmou for helpful comments on a first version of the paper. Partial support form the National Science Foundation grant DMS-1500852 is also acknowledged.

\section{Preliminaries}

\subsection{Notations}\label{notations}

We let $X$ be a Riemannian surface with one hyperbolic cusp, i.e. a cusp with constant curvature. This means that $X$ can be split into two parts, $X = X_0 \cup X_1$, where $X_1$ is a compact Riemannian surface with boundary, and $X_0 = (c_0,\infty)_r \times (\R/\Z)_\theta$ with metric $dr^2+e^{-2r}d\theta^2$. Using the notation $\xi dr+\eta d\theta$ for cotangent vectors in $X_0$, 
the Hamiltonian induced by the metric in the cusp is given by 
\[  p(r,\theta;\xi,\eta) = \xi^2 + e^{ 2 r } \eta^2 . \]
In $X_0$, any $u \in L^2_{\text{loc}}$ function can be expanded into Fourier series in the $\theta$ variable:
\[u(r,\theta) = \sum_{n \in \Z}{u_n(r)}e^{2i\pi n\theta},\]
where the $u_n$ are in $L^2_{{\rm loc}}((c_0,\infty);e^{-r}dr)$. 
The metric induces a natural measure $\mu$, called Liouville measure, on the unit cotangent bundle $S^*X$ and for simplicity we shall normalize it so that it is a probability measure. The projection $T^*X\to X$ on the base will be denoted by $\pi$.
Finally, $C>0$ will denote a generic constant that is independent of the parameters we consider (except when indicated), and that will change from line to line.

\subsection{Definition of the pseudo-Laplacian}\label{pseudo}

\begin{defi}
Let $c > c_0$. Let us denote $L^2_{0,c}$ (resp. $H^1_{0,c}$) the space of all $u \in L^2(X)$ (resp. $u \in H^1(X)$) such that $u_0(r)=0$ for every $r \geq c$. 
The pseudo-Laplacian $\Delta_c$ is the unbounded non-negative self-adjoint operator on $L^2_{0,c}$ defined by the quadratic form using the Friedrichs method 
\[\forall u\in H^1_{0,c}, \quad  q(u) = \int_X{|\nabla^g u|_g^2}d{\rm v}_g.\] 
The Riemannian measure $d{\rm v}_g$ and the gradient are with respect to $g$.
\end{defi}
We note that the spaces $L^2_{0,c}$ and $H^1_{0,c}$ are closed vector subspaces of $L^2(X)$, and $H^1(X)$. The circle $r=c$ in $X_0$ will be referred to as \emph{the singular circle}.

The following results are proved in \cite[Theorem 2]{CdV3}. 

\begin{prop}
The operator $\Delta_c$ is an unbounded, non-negative, self-adjoint operator with compact resolvent and discrete spectrum.
\end{prop}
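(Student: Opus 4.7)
The plan is to invoke the Friedrichs construction to obtain the operator and then establish a compact Sobolev-type embedding adapted to the cusp. First, I would check the hypotheses of the Friedrichs method: the form domain $H^1_{0,c}$ is a closed subspace of $H^1(X)$ (since the constraint $u_0(r)=0$ for $r \geq c$ defines a closed hyperplane in $L^2$ of the $\theta$-projection of any slice), it is dense in $L^2_{0,c}$ (one can approximate $u \in L^2_{0,c}$ by cutting off in $r$ and mollifying in both variables in a way that preserves the zero-mean constraint in the cusp), and the quadratic form $q(u) = \|\nabla^g u\|_{L^2}^2$ is non-negative with form norm equivalent to the $H^1$ norm — so $q$ is closed. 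The Friedrichs construction then produces a non-negative self-adjoint operator $\Delta_c$ on $L^2_{0,c}$.

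The heart of the proof is to show that the inclusion $H^1_{0,c} \hookrightarrow L^2_{0,c}$ is compact. I would split $X = X_1 \cup X_0$ and, given $R > c$, decompose $X = X_R \cup X_R^c$ where $X_R = X_1 \cup \{c_0 < r \leq R\}$ is relatively compact. Standard Rellich--Kondrachov handles the $X_R$ part. On the tail $\{r \geq R\} \subset X_0$, I decompose $u = \sum_n u_n(r) e^{2i\pi n\theta}$. The zero mode vanishes identically there for $u \in H^1_{0,c}$. For the nonzero modes, the form contains
\[ q(u) \geq \sum_{n\neq 0}\int (2\pi n)^2 e^{2r}|u_n(r)|^2 e^{-r}\,dr = \sum_{n\neq 0} (2\pi n)^2 \int |u_n|^2 e^r\,dr, \]
so that
\[ \int_{r\geq R}\sum_{n\neq 0}|u_n(r)|^2 e^{-r}\,dr \leq e^{-2R}\sum_{n\neq 0}\int |u_n|^2 e^r\,dr \leq \frac{e^{-2R}}{(2\pi)^2} q(u). \]
Thus the $L^2$-mass of any bounded sequence in $H^1_{0,c}$ in the tail $\{r \geq R\}$ tends to $0$ as $R \to \infty$ uniformly in the sequence. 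A diagonal extraction combined with Rellich on the relatively compact pieces then yields an $L^2$-convergent subsequence.

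Once the compact embedding is secured, the resolvent $(\Delta_c+1)^{-1}: L^2_{0,c}\to L^2_{0,c}$ factors through the compact inclusion of the form domain into $L^2_{0,c}$ and is therefore compact. By the spectral theorem for compact self-adjoint operators, $\Delta_c$ has purely discrete spectrum accumulating only at $+\infty$, which in particular shows the operator is unbounded (otherwise its spectrum would be bounded).

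The main obstacle is the compact embedding step, because the non-compactness of $X$ rules out an immediate appeal to Rellich. The mechanism that saves us is specific to the pseudo-Laplacian setup: the zero-mode constraint kills the only Fourier component for which the metric contribution $(2\pi n)^2 e^{2r}$ vanishes, and for every other mode this factor acts as an unbounded confining weight. Keeping careful track of the dual weighting against the measure $e^{-r}dr\,d\theta$ is the only quantitative ingredient one really needs beyond classical arguments.
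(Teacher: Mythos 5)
Your argument is correct and complete. Note that the paper does not actually prove this proposition: it just cites \cite[Theorem~2]{CdV3} and moves on. Your proof (Friedrichs extension on the closed, densely defined, non-negative form $q$ on $H^1_{0,c}$, followed by the compact embedding $H^1_{0,c}\hookrightarrow L^2_{0,c}$ obtained from Rellich on a relatively compact exhaustion together with the uniform tail estimate
\[
\int_{r\geq R}\sum_{n\neq 0}|u_n(r)|^2 e^{-r}\,dr \leq \frac{e^{-2R}}{(2\pi)^2}\,q(u),
\]
which uses exactly the point that the zero-mode constraint removes the one mode for which the confining weight $(2\pi n)^2e^{2r}$ vanishes) is in substance the classical Colin de Verdi\`ere argument, so what you wrote is what the cited reference establishes. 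The only step you pass over somewhat quickly is the density of $H^1_{0,c}$ in $L^2_{0,c}$, which the Friedrichs construction requires; this is standard and can be seen, e.g., by noting that $C_c^\infty(X)\cap L^2_{0,c}$ is already dense in $L^2_{0,c}$ and contained in $H^1_{0,c}$ (approximate separately the zero mode, which must vanish for $r\geq c$, and the non-zero modes). Everything else is rigorous.
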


We will denote $(u_j)_j$ an orthonormal family of eigenfunctions with positive eigenvalues of $\Delta_c-\frac{1}{4}$, that is, $\Delta_cu_j = \left(\lambda_j^2+\frac{1}{4}\right)u_j$, where $(\lambda_j)_j$ is a positive, non-decreasing sequence going to $+\infty$. Note that the orthogonal of 
${\rm Span}\{u_j,\,j \geq 0\}$ in $L^2_{0,c}$ is a finite-dimensional space that possesses an orthonormal basis of eigenfuctions of $\Delta_c$. 
We will denote, for each $j \geq 0$, $h_j = \lambda_j^{-1}$.

Note that we extend $\Delta_c$ as an unbounded self-adjoint operator from $L^2$ to $L^2$ with compact resolvent by declaring that $\Delta_cv=0$ whenever $v \in L^2(X)$ has support in $\{r \geq c\}$ and $v$ does only depend on $r$. 

\subsection{Review of semiclassical analysis}
We shall use the following semiclassical quantization procedure, which is similar to \cite[Chapter 14.2]{EZ}: we fix a cover by countably may open sets $U_i$ of $X$, $i \geq 0$, 
with diffeomorphisms $\varphi_i:\,U_i \rightarrow V_i$, where the $V_i \subset \R^2$ are open sets, and take a partition of unity $(\chi_i^2)_i$ associated with it. A compactly supported symbol $a\in S_{\rm comp}^m(X)$ is a smooth function $a\in C^\infty(T^*X)$ whose support projects to $X$ into a compact set 
 and satisfying uniform bounds 
 \[ |\partial_x^\alpha \partial_\xi^\beta a(x,\xi)|\leq C_{\alpha,\beta}\cjg \xi\cjd^{m-|\beta|}
  \]
for all multi-indices $\alpha,\beta$.
Then for any symbol $a \in S_{\rm comp}^m(X)$ with compact space support, and $h > 0$, we define 
\[\Op_h(a):= \sum_i{\chi_i(\varphi_i)^*((\varphi_i)_*a)^w(x,hD)(\varphi_i)_*\chi_i}.\]
where $b^{w}(x,hD)$ means the Weyl quantization.
When $U_i\cap \pi(\supp(a))=\emptyset$ (which always happen but for a finite number of $i$), $i$ does not contribute to the sum, because $((\varphi_i)_*a) = 0$. In any case, $(\varphi_i)_*a \in S^m_{{\rm loc}}(V_i)$. 

The specific choice of the partition of unity is not important, because the difference between two different such quantizations is then an $\mathcal{O}(h)_{L^2 \rightarrow L^2}$ for any $S_{\rm comp}^0(X)$ symbol. We shall thus make the following choices:
\begin{itemize}
\item $V_0 = V_1 = (c-\eps,c+\eps) \times (0,1)$
\item $U_0 = (c-\eps,c+\eps) \times (0,1) \subset (c_0,\infty) \times \R/\Z$ and $\varphi_0$ is the Identity. 
\item $U_1 = (c-\eps,c+\eps) \times \left(-\frac{1}{2},+\frac{1}{2}\right) \subset (c_0,\infty) \times \R/\Z$ and $\varphi_1$ is a shift in the second coordinate only. 
\item $\chi_0$ and $\chi_1$ are only $r$-dependent. 
\item For every $i \geq 2$, $\chi_i = 0$ in $\left\{c-\frac{\eps}{2} < r < c+\frac{\eps}{2}\right\}$. 
\item Every $V_i$ is convex. 

\end{itemize}

With this procedure all the useful properties (about composition, Lie brackets, $L^2$ operators bounds for $S^0$ quantized symbols) hold: the proofs from \cite[Chapters 14, 15]{EZ} still apply when the symbols are compactly supported in $x$, however the constants depend on the size of the supports.

\section{Estimates on the singular circle}

\subsection{Riemannian Laplacian of the eigenfunctions}
In this section, we study the the family $(\Delta-\lambda_j^2-\frac{1}{4})u_j$.
We will denote by $\delta_c$ the Lebesgue measure with total mass $1$ 
on the circle $r=c$ of the cusp of $X$. 

The following lemma is an easy application of Stokes's theorem. 
\begin{lem}
\label{laplacian_truncated}
Let $\varphi$ be a smooth function on $X$ such that $\varphi(r,\theta)=\tilde{\varphi}(r)$ in the cusp on $r > c-\eps$ for some $0 < \eps < c-c_0$, where $\tilde{\varphi}:(c_0, \infty) \rightarrow \C$ is smooth. Assume that $\tilde{\varphi}(c) = 0$, then $\Delta(\mathbbm{1}_{r \geq c}\varphi) = \mathbbm{1}_{r \geq c}\Delta\varphi-e^{-c}\tilde{\varphi}'(c)\delta_c$.
\end{lem}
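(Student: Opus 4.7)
The plan is to prove the identity in the sense of distributions by pairing $\Delta(\mathbbm{1}_{r\geq c}\varphi)$ against an arbitrary test function $\psi\in C^\infty_c(X)$ and applying Green's formula on the domain $\Omega=\{r\geq c\}$. Using self-adjointness of $\Delta$ on smooth compactly supported functions, we write
\[
\langle \Delta(\mathbbm{1}_{r\geq c}\varphi),\psi\rangle
= \int_\Omega \varphi\,\Delta\psi\, d{\rm v}_g,
\]
and then integrate by parts twice, transferring the Laplacian onto $\varphi$. This produces the bulk term $\int_\Omega (\Delta\varphi)\psi\, d{\rm v}_g=\langle \mathbbm{1}_{r\geq c}\Delta\varphi,\psi\rangle$ plus a boundary contribution on the singular circle $\{r=c\}$.

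The boundary term of Green's identity is $\int_{\partial\Omega}(\psi\,\partial_\nu\varphi-\varphi\,\partial_\nu\psi)\,d\sigma$, where $\nu$ is the outward unit normal to $\Omega$. Since $\tilde\varphi(c)=0$, the second piece vanishes, and only $\int_{\partial\Omega}\psi\,\partial_\nu\varphi\,d\sigma$ remains. In the cusp coordinates $(r,\theta)$ with metric $dr^2+e^{-2r}d\theta^2$, the outward normal to $\{r\geq c\}$ is $\nu=-\partial_r$, so that $\partial_\nu\varphi=-\tilde\varphi'(c)$, while the induced surface element on the circle $\{r=c\}$ is $d\sigma=e^{-c}d\theta$, giving total mass $e^{-c}$. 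Combined with the normalization of $\delta_c$ as a probability measure on the circle, the boundary contribution becomes $-e^{-c}\tilde\varphi'(c)\langle\delta_c,\psi\rangle$.

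Putting the pieces together yields
\[
\langle \Delta(\mathbbm{1}_{r\geq c}\varphi),\psi\rangle
= \langle \mathbbm{1}_{r\geq c}\Delta\varphi,\psi\rangle - e^{-c}\tilde\varphi'(c)\langle\delta_c,\psi\rangle
\]
for every test $\psi$, which is the claimed distributional identity. The only subtle points are bookkeeping: making sure one uses the analyst's sign convention for $\Delta$ (consistent with $\Delta_c\geq 0$ in the paper) so that Green's identity has the right sign, identifying $\nu=-\partial_r$ as the outward normal on $\partial\Omega$, and correctly reading off the factor $e^{-c}$ from $\sqrt{\det g}=e^{-r}$. The hypothesis that $\varphi$ depends only on $r$ in a neighborhood of the circle guarantees that $\partial_\nu\varphi$ is well-defined and constant along the circle, so no $\theta$-integration issues arise. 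This is the main (and essentially only) obstacle — the computation is otherwise routine.
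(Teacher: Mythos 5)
Your proof is correct and follows exactly the route the paper intends — the paper states only that the lemma ``is an easy application of Stokes's theorem,'' and your Green's-identity computation on $\Omega=\{r\geq c\}$, with $\nu=-\partial_r$, $d\sigma=e^{-c}\,d\theta$, and the paper's convention that $\delta_c$ has total mass $1$ (so $\langle\delta_c,\psi\rangle=\int_{\R/\Z}\psi(c,\theta)\,d\theta$), is precisely that application.
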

Now, we can write the Laplacian of $u_j$ as a function of its zero mode.  

\begin{cor}
\label{defi_alphaq}
For $j \geq 0$ and $h_j=\lambda_j^{-1}$, we have $(u_j)_0(r) = \alpha_je^{r/2}\sin{\frac{r-c}{h_j}}$ for some $\alpha_j\in\mathbb{R}$ in the region $c_0 < r \leq c$, it vanishes when $r \geq c$, and 
\[\left(\Delta-\lambda_j^2-\frac{1}{4}\right)u_j = Q_j\delta_c,\] 
where $Q_j=+e^{-c/2}\frac{\alpha_j}{h_j}$.
\end{cor}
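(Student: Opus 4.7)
The plan is to prove the two statements in turn: first the explicit formula for $(u_j)_0$ on $c_0 < r \leq c$, then the identification of $(\Delta-\lambda_j^2-\tfrac14)u_j$ as a multiple of $\delta_c$.

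For the first part, $u_j \in \mathcal{D}_c \subset H^1_{0,c}$ gives $(u_j)_0(r) = 0$ for $r \geq c$, and by elliptic regularity $u_j$ is smooth on $\{r < c\}$ with $\Delta u_j = (\lambda_j^2+\tfrac14) u_j$ pointwise there. In the cusp $\Delta = -\partial_r^2 + \partial_r - e^{2r}\partial_\theta^2$, so the zero Fourier mode $v := (u_j)_0$ solves $-v''+v' = (\lambda_j^2+\tfrac14) v$ on $c_0 < r < c$. The substitution $v = e^{r/2} w$ reduces this to $w'' + \lambda_j^2 w = 0$, whose general solution is a combination of $\cos\frac{r-c}{h_j}$ and $\sin\frac{r-c}{h_j}$; the trace of $v$ at $r=c$ must agree with the value $0$ from the right (since $u_j \in H^1$ and $v \equiv 0$ on $r \geq c$), forcing $v(c) = 0$, so the cosine coefficient vanishes and I obtain $(u_j)_0(r) = \alpha_j e^{r/2}\sin\frac{r-c}{h_j}$ with $\alpha_j \in \mathbb{R}$ (taking $u_j$ real-valued, as we may since $\Delta_c$ has real coefficients).

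For the second part, I would decompose in the cusp $u_j = \tilde v \,\mathbbm{1}_{r\leq c} + w$, where $\tilde v(r) := \alpha_j e^{r/2}\sin\frac{r-c}{h_j}$ is smoothly extended to all $r \in (c_0,\infty)$ and $w := \sum_{n\neq 0}(u_j)_n(r)\,e^{2i\pi n\theta}$. By construction $\tilde v$ satisfies the radial ODE on all of $(c_0,\infty)$, hence $\Delta\tilde v = (\lambda_j^2+\tfrac14)\tilde v$ classically. Writing $\tilde v = \tilde v\,\mathbbm{1}_{r\leq c}+\tilde v\,\mathbbm{1}_{r\geq c}$, using $\tilde v(c)=0$, and applying Lemma \ref{laplacian_truncated} to the right-hand piece yields
\[(\Delta-\lambda_j^2-\tfrac14)(\tilde v\,\mathbbm{1}_{r\leq c}) = e^{-c}\tilde v'(c)\,\delta_c,\]
and a direct computation gives $\tilde v'(c) = \alpha_j\lambda_j e^{c/2} = \alpha_j e^{c/2}/h_j$, producing the coefficient $Q_j = e^{-c/2}\alpha_j/h_j$.

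The main obstacle, modest but real, is to ensure that $w$ contributes nothing on the singular circle — that is, each non-zero Fourier mode is smooth across $r = c$. I would establish this by testing the form identity $\int \nabla u_j \cdot \nabla v = (\lambda_j^2+\tfrac14) \int u_j v$ against $v = \phi(r) e^{2i\pi n\theta}$ for arbitrary $\phi \in C^\infty_c((c_0,\infty))$ and $n \neq 0$; any such $v$ automatically lies in $H^1_{0,c}$ because its zero Fourier mode vanishes identically, so $(u_j)_n$ satisfies the radial eigenvalue ODE distributionally on all of $(c_0,\infty)$, hence classically by ODE regularity. Summing the smooth contribution of $w$ with the singular contribution of the zero mode then produces the claimed identity.
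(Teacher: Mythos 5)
Your proof is correct, and it arrives at the same formula $Q_j = e^{-c/2}\alpha_j/h_j$, but it takes a genuinely different route at the key step. The paper's proof cites the construction from \cite{CdV3} (Theorem 4 there): it takes as an input that $u_j' := u_j + \mathbbm{1}_{r\geq c}\alpha_je^{r/2}\sin\frac{r-c}{h_j}$ is a (non-$L^2$) eigenfunction of $\Delta$, writes $u_j = u_j' - \mathbbm{1}_{r\geq c}\alpha_je^{r/2}\sin\frac{r-c}{h_j}$, and then applies Lemma \ref{laplacian_truncated} to the truncated term. You instead avoid invoking that result: you split $u_j$ in the cusp into the zero mode $\tilde v\,\mathbbm{1}_{r\leq c}$ plus the higher modes $w$, handle the zero mode via the same Lemma \ref{laplacian_truncated} (with the complementary truncation), and dispose of $w$ by proving directly that each $(u_j)_n$, $n\neq 0$, satisfies the mode-$n$ radial ODE distributionally across $r=c$ — since test functions of the form $\phi(r)e^{2i\pi n\theta}$, $n\neq 0$, lie in $H^1_{0,c}$ automatically — and hence is smooth by ODE regularity. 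Your approach is more self-contained (it proves, rather than cites, the fact that all the singularity sits in the zero Fourier mode), at the cost of the brief additional weak-formulation argument and a minor convergence point: one should note that summing $(\Delta-\lambda_j^2-\tfrac14)$ over the Fourier modes is justified because $u\mapsto(\Delta-\lambda_j^2-\tfrac14)u$ is continuous $H^1_{\rm loc}\to H^{-1}_{\rm loc}$ and the Fourier decomposition of $u_j$ converges there. The paper's version is shorter but offloads the crux to an external reference.
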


\begin{proof} 
On $\{c_0 < r < c\}$, $(u_j)$ is an eigenfunction of the positive Riemannian Laplacian $\Delta$ with eigenvalue $\lambda_j^2+\frac{1}{4}$. In our coordinates, $\Delta = -\partial_r^2 + \partial_r - e^{2r}\partial_{\theta}^2$. On the zero Fourier mode of $u_j$, $\partial_{\theta}$ acts as $0$, thus $(-\partial_r^2+\partial_r)(u_j)_0 = (\lambda_j+\frac{1}{4})^2(u_j)_0$. Setting $(u_j)_0(r) = v_0(r)e^{r/2}$ yields $-\partial_r^2v_0 = \lambda_j^2v_0$. The boundary condition $(u_j)_0(c) = 0$ then gives the formula for $(u_j)_0$. \\
From the proof of Theorem $4$ in \cite{CdV3}, $$u'_j := u_j + \mathbbm{1}_{r \geq c}\alpha_je^{r/2}\sin{\frac{r-c}{h_j}}$$ is a non-$L^2$ eigenfunction of the positive Laplacian with eigenvalue $\lambda_j^2 + \frac{1}{4}$. Therefore, using lemma \ref{laplacian_truncated}, 
\begin{align*}
(\Delta-\Delta_c)u_j &= \left(\Delta-\lambda_j^2-\frac{1}{4}\right)u_j = \left(\Delta - \lambda_j^2-\frac{1}{4}\right)(u_j-u'_j) \\
&=- \alpha_j\left(\Delta-\lambda_j^2-\frac{1}{4}\right)\left(\mathbbm{1}_{r \geq c}e^{r/2}\sin{\frac{r-c}{h_j}}\right) \\
&= +e^{-c/2}\frac{\alpha_j}{h_j}\delta_c.
\end{align*}
This completes the proof.
\end{proof}

To estimate the $\Delta u_j$, we need an adequate description of the constants $\alpha_j$ from Corollary \ref{defi_alphaq}.  

\begin{prop}
There exists a smooth compactly supported function $\tilde{\phi}$ on $X$, and a sequence $(I_j)_{j \geq 0}$ such that $I_j\alpha_j = \langle u_j,\,\tilde{\phi}\rangle$ (it is the $L^2$ inner product) for every $j \geq 0$ and $I_j = -h_j+\mathcal{O}(h_j^{\infty})$. 
\end{prop}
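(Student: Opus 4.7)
The plan is to take $\tilde\phi$ supported in the cusp and depending only on the coordinate $r$ there, so that integration in $\theta$ cuts the inner product down to the zero Fourier mode of $u_j$. Writing $\tilde\phi(r,\theta) = \psi(r)$ in the cusp, with $\psi$ smooth and compactly supported in $(c-\eps,c+\eps)$ (the interval chosen small enough to lie inside $(c_0,\infty)$), and using the explicit formula for $(u_j)_0$ from Corollary~\ref{defi_alphaq} together with the Riemannian volume element $e^{-r}\,dr\,d\theta$, one obtains
\[
\langle u_j,\tilde\phi\rangle \;=\; \alpha_j\,I_j,\qquad I_j := \int_{c_0}^{c} e^{-r/2}\sin\frac{r-c}{h_j}\,\psi(r)\,dr.
\]
After the change of variables $u = c-r$ and setting $\tilde\psi(u) := e^{-(c-u)/2}\psi(c-u)$, which is smooth and compactly supported in $(-\eps,\eps)$, this reduces to
\[
I_j \;=\; -\int_0^{\eps}\sin(u/h_j)\,\tilde\psi(u)\,du.
\]

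Next, I would extract the $h_j$-asymptotics of this oscillatory integral by repeated integration by parts. Since $\tilde\psi$ vanishes to infinite order at $\eps$, boundary contributions at $\eps$ disappear, and since $\sin 0 = 0$, only the cosine boundary terms at $0$ survive. Alternating $\sin\leftrightarrow\cos$ twice returns a sine integral with two additional derivatives on $\tilde\psi$ and a factor of $-h_j^2$; induction then yields, for every $N\geq 0$,
\[
I_j \;=\; -\sum_{k=0}^{N}(-1)^k h_j^{2k+1}\,\tilde\psi^{(2k)}(0) \;+\; \mathcal{O}(h_j^{2N+3}),
\]
with constants depending on $N$ and on $\sup|\tilde\psi^{(2N+2)}|$ but uniform in $j$.

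To finish, it is enough to choose $\tilde\psi$ so that $\tilde\psi(0) = 1$ and $\tilde\psi^{(2k)}(0) = 0$ for every $k\geq 1$, which collapses the expansion to $I_j = -h_j + \mathcal{O}(h_j^\infty)$. A natural explicit choice is $\tilde\psi(u) = (1+u)\chi(u)$, where $\chi\in\CIc(\R)$ equals $1$ on a neighbourhood of $0$ and is supported in $(-\eps,\eps)$: near $0$, $\tilde\psi$ coincides with the affine function $1+u$, so the only nonzero Taylor coefficients of $\tilde\psi$ at $0$ are the constant $1$ and the linear term. Reversing the substitution gives $\psi(r) = e^{r/2}\tilde\psi(c-r)$, smooth and compactly supported in $(c-\eps,c+\eps)$, and extending $\tilde\phi(r,\theta) = \psi(r)$ by zero outside the cusp produces the required smooth compactly supported function on $X$. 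The one mildly delicate point is arranging an infinite list of Taylor conditions at a single point by a smooth compactly supported function, which is handled by the explicit construction above; everything else is a routine integration-by-parts asymptotic.
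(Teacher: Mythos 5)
Your proof is correct and follows essentially the same route as the paper: a radial test function of the form $e^{r/2}\times(\text{cutoff with prescribed even-order Taylor data at }r=c)$, so that the volume factor $e^{-r}$ cancels and the inner product reduces to the one-dimensional oscillatory integral $I_j$, whose leading term $-h_j$ comes from the boundary value at the singular circle. The only (inessential) difference is how the $\mathcal{O}(h_j^\infty)$ remainder is obtained: you iterate integration by parts and kill all higher boundary terms via the vanishing even derivatives, whereas the paper performs one integration by parts and then uses an even reflection of $\phi_1'$ together with the rapid decay of the Fourier transform of a compactly supported smooth function.
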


\begin{proof}
Let $\phi$ be any smooth compactly supported function on $\R$ such that:
\begin{itemize}
\item $\phi=0$ on $\left(-\infty,\frac{c_0+c}{2}\right)$
\item $\phi(c) = 1$
\item for every $p \geq 1$, $\phi^{(2p)}(c) = 0$.
\end{itemize}
Let $\tilde{\phi}(r,\theta) = e^{r/2}\phi(r)$ (and $\tilde{\phi}$ is zero outside the cusp), such that $\tilde{\phi}$ is well-defined on $X$, smooth, compactly supported. 
Now, since $\tilde{\phi}$ has no non-zero $\theta$-Fourier mode, using its 
support property and the nature of the hyperbolic metric, we know that 
$\langle u_j,\,\tilde{\phi}\rangle = \alpha_jI_j$, where 
\[I_j = \int_{(c_0+c)/2}^c{\phi(r)\sin{\frac{r-c}{h_j}}dr} = \int_{-\infty}^c{\phi(r)\sin{\frac{r-c}{h_j}}dr} = -h_j+h_j\int_{-\infty}^c{\phi'(r)\cos{\frac{r-c}{h_j}}}.\]

Set $\phi_1(r) = \phi(r+c)$: there exists $\phi_2\in C_c^\infty(\R;\R)$  such that $\phi_2(r)=\phi_1'(r)$ if $r \leq 0$ and $\phi_2(r) = \phi_1'(-r)$ if $r \geq 0$ (recall that all derivatives of odd order of $\phi_1'$ vanish at $0$). Then 

$$2(I_j+h_j) = h_j\int_{-\infty}^0{\phi_2(r)e^{ir/h_j}dr}+h_j\int_0^{\infty}{\phi_2(r)e^{ir/h_j}dr} = h_j\left(\mathcal{F}\phi_2\right)\left(\frac{1}{h_j}\right) = \mathcal{O}(h_j^{\infty})$$ 
and we are done.
\end{proof}

\begin{cor}
\label{ef_are_ell2}
We have: $$\sum_{j}{|Q_j|^2h_j^4} < \infty.$$
\end{cor}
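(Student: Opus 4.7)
The strategy is short. By Corollary \ref{defi_alphaq}, $Q_j = e^{-c/2}\alpha_j/h_j$, so
\[
|Q_j|^2 h_j^4 = e^{-c}\,|\alpha_j|^2 h_j^2,
\]
and it suffices to prove $\sum_j |\alpha_j|^2 h_j^2 < \infty$.

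The preceding proposition furnishes a smooth compactly supported $\tilde\phi$ on $X$ with $\alpha_j I_j = \langle u_j,\tilde\phi\rangle$ and $I_j = -h_j + \mathcal{O}(h_j^\infty)$. Hence $h_j/I_j \to -1$ as $j\to\infty$, and in particular there exist $C>0$ and $j_0$ such that $|h_j/I_j| \leq C$ for all $j\geq j_0$. Then
\[
|\alpha_j|^2 h_j^2 = \left|\frac{h_j}{I_j}\right|^2 |\langle u_j,\tilde\phi\rangle|^2 \leq C^2\, |\langle u_j,\tilde\phi\rangle|^2 \qquad (j\geq j_0).
\]

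The sequence $(u_j)_j$ is orthonormal in $L^2(X)$, so Bessel's inequality gives
\[
\sum_j |\langle u_j,\tilde\phi\rangle|^2 \leq \|\tilde\phi\|_{L^2(X)}^2 < \infty,
\]
since $\tilde\phi$ is smooth and compactly supported. Combining the two displays (and absorbing the finitely many terms $j<j_0$) yields $\sum_j |\alpha_j|^2 h_j^2 < \infty$, and therefore $\sum_j |Q_j|^2 h_j^4 < \infty$. There is no real obstacle here; the only substantive input is the asymptotic $I_j \sim -h_j$ already proved, together with the fact that the $u_j$ form an orthonormal family in $L^2$ (not merely in some restricted subspace), so that Bessel applies directly to the inner products $\langle u_j,\tilde\phi\rangle$.
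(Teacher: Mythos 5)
Your proof is correct and follows essentially the same route as the paper: rewrite $|Q_j|^2 h_j^4$ in terms of $\alpha_j$ and $h_j$ via Corollary \ref{defi_alphaq}, trade $h_j$ for $I_j$ using the asymptotic $I_j \sim -h_j$, and then close with Bessel's inequality applied to $\langle u_j,\tilde\phi\rangle$. Your write-up is a bit more careful than the paper's (e.g.\ making explicit the uniform bound on $|h_j/I_j|$ and the handling of finitely many exceptional $j$), but the underlying argument is identical.
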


\begin{proof}
Since $h_j \sim -I_j$ as $j$ goes to infinity, and since $Q_j = F\alpha_j(h_j)^{-1}$ for some constant $F$, we find that $|Q_j|^2h_j^4$ is positive and is equivalent to $|F|^2\alpha_j^2I_j^2 = |F|^2\langle u_j,\,\tilde{\phi}\rangle^2$ (with the above notations). Now, since the $(u_j)_j$ forms an orthonormal family in $L^2$, $$\sum_j{\langle u_j,\,\tilde{\phi}\rangle^2} \leq \|\tilde{\phi}\|^2_{L^2} < \infty,$$ which proves the claim.   
\end{proof}

\subsection{Pseudo-differential operators acting on $\delta_c$}

Following up on the previous subsection, we have:

\begin{prop}
\label{singular_S-2}
Let $a \in S_{\rm comp}^{-2}(X)$ with  $\pi(\supp(a))\subset \{c-\eps < r < c+\eps\}$. Then, for some $C > 0$ not depending on $a$, for every $1 > h > 0$, $$\|\Op_h(a)\delta_c\|_{L^2}^2 \leq \frac{C}{h}\int_{\R / \Z}{\int_{\R}{|a|^2(c,\theta,\xi,0)d\xi}d\theta}+ C\|a\|^2_{S^{-2}},$$ where $\|\cdot\|_{S^{-2}}$ is some $S^{-2}$ seminorm (in every estimate of that kind in the following, the seminorm will have to be universal). 
\end{prop}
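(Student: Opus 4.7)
The plan is to compute $\|\Op_h(a)\delta_c\|_{L^2}^2$ via a $T^*T$ argument combined with the semiclassical symbol calculus, reducing the problem to pairings of the form $\langle \Op_h(b)\delta_c,\delta_c\rangle$ that can be evaluated in closed form by a Fourier calculation in the cusp chart. Writing $\|\Op_h(a)\delta_c\|^2=\langle \Op_h(a)^*\Op_h(a)\delta_c,\delta_c\rangle$ and using the semiclassical symbol calculus, one has $\Op_h(a)^*\Op_h(a)=\Op_h(\overline{a}\#a)+O(h^\infty)_{L^2\to L^2}$ modulo partition of unity errors, with Moyal expansion $\overline{a}\#a=|a|^2+h s_1+h^2 s_2+\cdots$ where each $s_k\in S^{-4-k}$ is a polynomial in the derivatives of $a,\overline{a}$ whose seminorms are controlled by $\|a\|_{S^{-2}}^2$.

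The heart of the proof is the closed-form identity, valid for any $b\in S^{-N}$ ($N$ large) with $\pi(\supp b)\subset\{c-\eps<r<c+\eps\}$,
$$\langle \Op_h(b)\delta_c,\delta_c\rangle = \frac{1}{2\pi h}\int_{\R/\Z}\int_\R b(c,\theta,\xi,0)\,d\xi\,d\theta + O(h^\infty).$$
To see this, work in the chart $V_0$ (the contribution from $V_1$ and the chart gluing are handled identically because the cutoffs $\chi_i$ are $r$-only and $\sum_i\chi_i(c)^2=1$). The Weyl kernel of $\Op_h(b)$ at $\bigl((c,\theta_1),(c,\theta_2)\bigr)$ equals
$$\frac{1}{(2\pi h)^2}\iint e^{i(\theta_1-\theta_2)\xi_2/h}\,b\!\left(c,\tfrac{\theta_1+\theta_2}{2},\xi_1,\xi_2\right)\,d\xi_1\,d\xi_2,$$
since $\delta_c$ imposes $x_1=y_1=c$ and the $\xi_1$-phase trivializes. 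Pairing against $\delta_c\otimes\delta_c$ and changing variables to $u=(\theta_1+\theta_2)/2$, $v=\theta_1-\theta_2$, the compact $\theta$-support of $b$ (inherited from the cutoffs) lets one extend $v$ to $\R$, producing $\int_\R e^{iv\xi_2/h}\,dv=2\pi h\,\delta(\xi_2)$ after $O(h^\infty)$ integration-by-parts corrections in $\xi_2$. This localizes $\xi_2=0$ and gives the claimed formula.

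Plugging $b=\overline{a}\#a$ into this identity yields
$$\|\Op_h(a)\delta_c\|^2 = \frac{1}{2\pi h}\int|a|^2(c,\theta,\xi,0)\,d\xi\,d\theta + \frac{1}{2\pi}\int s_1(c,\theta,\xi,0)\,d\xi\,d\theta + O(h)\|a\|_{S^{-2}}^2,$$
in which the first term is the claimed $\tfrac{C}{h}$-contribution and the remainder is bounded by $C\|a\|_{S^{-2}}^2$, since $s_1\in S^{-5}$ has $\xi$-integrable decay and its $L^1$ norm at $\xi_2=0$ is dominated by $\|a\|_{S^{-2}}^2$. The main obstacle is the careful treatment of the partition of unity and the passage from the bounded $\theta$-chart to the Euclidean integration required by the delta formula: the cutoffs $\chi_0,\chi_1$ localize the effective symbols, and integration-by-parts / Riemann--Lebesgue estimates absorb all tail contributions into the $\|a\|_{S^{-2}}^2$ term.
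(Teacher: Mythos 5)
Your proposal is correct but takes a genuinely different route from the paper. The paper proves Lemma~\ref{chart_singular} by writing $(2\pi h)^4\|a^w(x,hD)\nu\|^2_{L^2}$ as a single $8$-fold oscillatory integral and applying stationary phase in the six ``inner'' variables $(r,\xi',\phi,\phi',\eta,\eta')$, with an explicit non-degenerate $6\times 6$ Hessian; Proposition~\ref{singular_S-2} then follows by splitting $a$ into a piece supported near $\{r=c\}$ (treated by that lemma) and a piece at positive distance from it (killed by the wavefront set of $\delta_c$). You instead reduce via $T^*T$ to the scalar pairing $\langle \Op_h(\bar a\# a)\delta_c,\delta_c\rangle$ and evaluate it by a direct Fourier calculation on the circle, using the Moyal expansion to organize the lower-order contributions. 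Your calculation is lower-dimensional (no Hessian to invert), needs no near/far splitting in $r$, and even yields the sharp leading constant $\tfrac{1}{2\pi}$ rather than a generic $C$; the price is that you must push the $\#$-calculus through the multi-chart quantization and then apply it to the distribution $\delta_c$, which requires slightly more care than you indicate.

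Two points you should tighten. First, the statement ``$\Op_h(a)^*\Op_h(a)=\Op_h(\bar a\# a)+O(h^\infty)_{L^2\to L^2}$ modulo partition of unity errors'' is not directly usable on $\delta_c\notin L^2$; what you actually need is that the remainder is in $h^\infty\Psi^{-\infty}_h$ with Schwartz kernel supported in a fixed compact set, hence smoothing, so that it maps $\delta_c$ to an $O_{C^\infty}(h^\infty)$ function whose pairing with $\delta_c$ is negligible. Second, in your closed-form identity, the extension of the $v=\theta_1-\theta_2$ integral from the bounded diamond to all of $\R$ is justified not by integration by parts in $\xi_2$ per se but by the rapid decay of the partial Fourier transform $\widehat{b}(\cdot,\,v/h)$ of $b$ in its $\eta$-variable: since $b$ is a smooth symbol, $\widehat{b}(w)=O(|w|^{-\infty})$, so $\int_{|v|>a}|\widehat b(-v/h)|\,dv=O(h^\infty)$ for any fixed $a>0$. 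One must also observe that, as $u$ approaches the edges of the chart where the admissible $v$-interval degenerates, the contribution is still controlled because the effective symbol (after the chart cutoffs) is already handled in the other chart $U_1$; this is precisely the role of the two overlapping cusp charts $U_0,U_1$ in the paper's quantization and should be said explicitly. With these points in place, your argument is complete and gives a cleaner derivation of the bound.
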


\begin{lem}
\label{chart_singular}
Let $a\in S_{\rm comp}^{-2}(\R^2)$, with  $\pi(\supp(a))\subset \left(c-\eps,c+\eps\right) \times (0,1)$. Let $\chi:\R \rightarrow [0,1]$ be smooth and zero outside $(0,1)$. Let $\langle\nu,\,\varphi\rangle = \int_0^1{\chi(\theta)\varphi(c,\theta)d\theta}$. 
Then, for some universal constant $C > 0$,
$$\|a^w(x,hD)\nu\|^2_{L^2} \leq \frac{C}{h}\int_{\substack{0 < \theta < 1 \\ \xi \in \R}}{|a(c,\theta,\xi,0)|^2|\chi(\theta)|^2\,d\theta d\xi} + C\|a\|^2_{S^{-2}}$$
\end{lem}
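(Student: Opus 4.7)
My plan is to compute $F := a^w(x,hD)\nu$ explicitly from the Weyl kernel formula and then extract its principal behavior by Taylor-expanding the symbol at $x_1 = c$. The Weyl kernel and the identity $\int (\,\cdot\,)\delta(y_1 - c)\,dy_1 = (\,\cdot\,)|_{y_1 = c}$ yield
\[ F(x_1,x_2) = \frac{1}{(2\pi h)^2}\int e^{i(x_1-c)\xi_1/h + i(x_2-y_2)\xi_2/h}\, a\!\left(\tfrac{x_1+c}{2},\tfrac{x_2+y_2}{2},\xi\right)\chi(y_2)\,dy_2\,d\xi_1\,d\xi_2. \]
Writing $a(u,v,\xi) = a(c,v,\xi) + (u-c)\,r(u,v,\xi)$ with $r \in S^{-2}$ the Taylor remainder and substituting $u = (x_1+c)/2$ decomposes $F = F_0 + F_1$, where $F_1$ carries a factor of $(x_1-c)/2$ in the integrand.

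For $F_0$ the symbol no longer depends on $x_1$, so
\[ F_0(x_1,x_2) = \frac{1}{2\pi h}\int e^{i(x_1-c)\xi_1/h}\,\Phi_0(x_2,\xi_1)\,d\xi_1, \]
where $\Phi_0(x_2,\xi_1) := \bigl[a(c,\cdot,\xi_1,\cdot)\bigr]^w(x_2, hD_{x_2})\chi(x_2)$ is a $1$-D semiclassical Weyl quantization. Plancherel in $x_1$ gives $\|F_0\|_{L^2}^2 = \tfrac{1}{2\pi h}\|\Phi_0\|^2_{L^2(dx_2\,d\xi_1)}$. I then invoke the standard 1D semiclassical expansion $b^w(x_2,hD)\chi = b(x_2,0)\chi(x_2) + h\,E_b\chi(x_2)$, obtained by Taylor-expanding $b((x_2+y_2)/2,\xi_2)$ around $(x_2, 0)$ and integrating by parts; combined with the observation that $b = a(c,\cdot,\xi_1,\cdot)$ has $\xi_2$-seminorms bounded by $C\langle\xi_1\rangle^{-2}\|a\|_{S^{-2}}$ (thanks to $a \in S^{-2}$), this yields
\[ \|\Phi_0(\cdot,\xi_1)\|^2_{L^2} \leq 2\int|a(c,x_2,\xi_1,0)|^2|\chi(x_2)|^2\,dx_2 + Ch^2\langle\xi_1\rangle^{-4}\|a\|^2_{S^{-2}}. \]
Integrating in $\xi_1$ (using $\int\langle\xi_1\rangle^{-4}d\xi_1 < \infty$) and dividing by $2\pi h$ produces the required $\tfrac{C}{h}$ principal contribution together with an error of order $h\|a\|^2_{S^{-2}}$.

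For $F_1$ I would use $\tfrac{x_1-c}{2}\,e^{i(x_1-c)\xi_1/h} = \tfrac{h}{2i}\,\partial_{\xi_1}e^{i(x_1-c)\xi_1/h}$ and integrate by parts in $\xi_1$ to rewrite $F_1 = -\tfrac{h}{2i}\,a_1^w(x,hD)\nu$, with $a_1 := \partial_{\xi_1}r \in S^{-3}$. A soft semiclassical Sobolev bound suffices here: a direct Fourier computation gives $\|\nu\|^2_{H^{-1}_h} \leq Ch^{-1}\|\chi\|^2_{L^2}$, and the $S^{-3}$ order of $a_1$ yields $a_1^w \colon H^{-1}_h \to H^{2}_h$ with operator norm controlled by $\|a\|_{S^{-2}}$, so $\|a_1^w\nu\|^2_{L^2} \leq Ch^{-1}\|a\|^2_{S^{-2}}$ and therefore $\|F_1\|^2_{L^2} \leq Ch\|a\|^2_{S^{-2}}$. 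Putting everything together via $\|F\|^2_{L^2} \leq 2(\|F_0\|^2_{L^2} + \|F_1\|^2_{L^2})$ and using $h \in (0,1)$ yields the stated inequality. The main subtlety is tracking the $\langle\xi_1\rangle^{-2}$ decay of the seminorms of $b$ so that the $\xi_1$-integral of the error converges to a universal constant; fixed dependencies on derivatives of $\chi$ that appear through the expansion are absorbed into $C$.
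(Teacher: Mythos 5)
Your argument is correct, and it follows a genuinely different route from the paper. The paper estimates $\|a^w(x,hD)\nu\|_{L^2}^2$ directly by writing the squared norm as a $6$-dimensional oscillatory integral in $(r,\xi',\phi,\phi',\eta,\eta')$ for each fixed $(\theta,\xi)$, and applying stationary phase at the non-degenerate critical point $r=c,\ \phi=\phi'=\theta,\ \eta=\eta'=0,\ \xi'=\xi$; the full-rank Hessian produces in one stroke both the leading term $Fh^3|a(c,\theta,\xi,0)\chi(\theta)|^2$ and the remainder $O\bigl(h^4\|a\|_{S^{-2}}^2(1+|\xi|^2)^{-1}\bigr)$. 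You instead work on $F=a^w(x,hD)\nu$ itself: Taylor expansion of the symbol at the first spatial variable equal to $c$ splits $F=F_0+F_1$; Plancherel in $x_1$ factors $F_0$ into a family of one-dimensional Weyl quantizations $\Phi_0(\cdot,\xi_1)$ acting on $\chi$, whose expansion at $\xi_2=0$ yields the main term, with the crucial $\langle\xi_1\rangle^{-2}$ decay of the seminorms of $a(c,\cdot,\xi_1,\cdot)$ (which you correctly flag) making the $\xi_1$-integral of the $O(h)$ errors converge; and $F_1$ is handled by one integration by parts in $\xi_1$ plus the soft bounds $\|\nu\|_{H^{-1}_h}^2\leq Ch^{-1}$ and $S^{-3}$ mapping properties. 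Your decomposition is more modular and avoids the $6$-dimensional Hessian computation, at the cost of needing two separate expansions and the remainder mapping estimates; the paper's single stationary-phase step is more compact but less transparent about where the evaluation at $(c,\theta,\xi,0)$ comes from. Two minor points to make explicit if you write this up: the kernel integrals with $a\in S^{-2}$ in dimension two are only conditionally convergent, so either first reduce to $a$ compactly supported in $\xi$ (as the paper does, checking the constants do not depend on the support) or interpret them as oscillatory integrals before Taylor-expanding and integrating by parts; and your error term, like the paper's, involves finitely many derivatives of $\chi$, which is harmless since in the application $\chi$ is a fixed cutoff from the quantization charts.
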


\begin{proof}
We may assume that $a$ is compactly supported, if we find out that $C$ does not depend on the support of $a$.
A computation gives 
\begin{align*}
(2h\pi)^4\|a^w(x,hD)\nu\|^2_{L^2} &= \int_0^1\int_{\R}\int_{\R^6}\chi(\phi)\chi(\phi')a\left(\frac{r+b}{2},\frac{\theta+\phi}{2},\xi,\eta\right)\overline{a}\left(\frac{r+b}{2},\frac{\theta+\phi'}{2},\xi',\eta'\right)\\
&\times\exp\left[\frac{i}{h}\left((r-c)(\xi-\xi') + \theta(\eta-\eta')+(\phi'\eta'-\phi\eta)\right)\right]dr d\phi d\phi' d\eta d\eta' d\xi'd\xi d\theta\\
&:= \int_0^1{\int_{\R}{I(h,\xi,\theta)d\xi}d\theta}
\end{align*}

Let $\varphi_{\xi,\theta}(r,\xi',\phi,\phi',\eta,\eta') = (r-c)(\xi-\xi') + \theta(\eta-\eta')+(\phi'\eta'-\phi\eta)$. $\varphi_{\xi,\theta}$ is a smooth function from $\R^6$ to $\R$, and its gradient is zero at the only point $r=c,\phi=\phi'=\theta,\eta=\eta'=0,\xi'=\xi$. Besides, at that point, the Hessian matrix of $\varphi_{\xi,\theta}$ is
$$\left(\begin{matrix}
0 & -1 & 0 & 0 & 0 & 0\\
-1 & 0 & 0 & 0 & 0 & 0\\
0 & 0 & 0 & 0 & -1 & 0\\
0 & 0 & 0 & 0 & 0 & 1\\
0 & 0 & -1 & 0 & 0 & 0\\
0 & 0 & 0 & 1 & 0 & 0
\end{matrix}\right),$$ so it has full rank and we see from the stationary phase method (say, \cite[Theorem 3.16]{EZ}), that for some constants $F,C$, $$|I(h,\xi,\theta) - Fh^3|a(c,\theta,\xi,0)\chi(\theta)|^2| \leq Ch^4\frac{\|a\|_{S^{-2}}^2}{1+|\xi|^2}.$$ 
The conclusion is easily drawn from this. 
\end{proof}

Now let us prove proposition \ref{singular_S-2}:
\begin{proof}
Let $\psi$ be a smooth function on $X$ such that $\psi=1$ on $\{2|r-c|<\eps\}$, and $\psi=0$ on $\{|r-c|>\eps\}$. Then write $a = a(1-\psi)+a\psi$. The support $\pi(\supp(a(1-\psi)))$ is at distance at most $\eps$ and at least $\frac{\eps}{2}$ from $\{r=c\}$. Therefore, $\|\Op_h(a(1-\psi))\delta_c\|^2_{L^2} \leq C\|a\|^2_{S^{-2}}$, for some universal constant $C > 0$. Now, apply lemma \ref{chart_singular} to the explicit quantization (as explained in section $1.2$) of $a(1-\psi)$ (where the only non-vanishing terms are for the charts $0$ and $1$). 
\end{proof}

\section{Ellipticity and variance bound}

In this section, we complete what we have called in the introduction the first step. We use the results of the former section, as well as an ellipticity estimate similar to the one from \cite{ZeZw}, to prove that the microlocalization of the eigenfunctions on the energy surface still holds, albeit on average only. 

\begin{defi}\label{def1}
We define, for any symbol $a\in S_{\rm comp}^0(X)$ and for any $h > 0$, $\lambda > 0$, 
\begin{align*}
N(\lambda) &:= |\{j,\,\lambda_j \leq \lambda\}|,\\
Y(a,h) &:= h\sqrt{\sum_{h/2 \leq h_j \leq 2h}{\|\Op_{h_j}(a)u_j\|_{L^2}^2}},\\
M(a,\lambda) &:= \sqrt{\frac{1}{N(\lambda)}\sum_{j,\,\lambda_j \leq \lambda}{\left|\langle\Op_{h_j}(a)u_j,\,u_j\rangle-\int_{S^*X}{a}\right|^2}}. 
\end{align*}
\end{defi}

\begin{rem}
The bound $M(a+b,t) \leq M(a,t)+M(b,t)$ holds, and similarly for $Y$.
\end{rem}
Let us mention the following very important result: 

\begin{prop}[Weyl law for Pseudo-Laplacians]\label{Weyl}
There is a constant $C>0$ such that  $N(\lambda) \sim C\lambda^2$ as $\lambda\to \infty$. 
As a consequence, there is $C_1>0,C_2>0$ such that for all $h>0$ small
\[C_1h^{-2}\leq |\{j\in\N;\,h/2 \leq h_j \leq 2h\}| \leq C_2h^{-2}.\] 
\end{prop}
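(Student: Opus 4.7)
The plan is a standard Dirichlet--Neumann bracketing argument. Fix any $R \in (c_0, c)$ and split $X = Y_1 \cup Y_2$ along the circle $\{r = R\}$ in the cusp, with $Y_1 = \{r \leq R\}$ (a compact surface with boundary) and $Y_2 = \{r \geq R\}$ (the cusp tail with boundary). Let $\Delta_c^D$ and $\Delta_c^N$ be the self-adjoint operators obtained from the quadratic form $q$ by restricting its form domain to functions in $H^1_{0,c}$ vanishing at $\{r=R\}$, respectively enlarging it to $H^1_{0,c}(Y_1) \oplus H^1_{0,c}(Y_2)$; each of these splits as a direct sum of operators on $Y_1$ and on $Y_2$. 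The min-max principle gives
$$N^D(\lambda) \leq N(\lambda) \leq N^N(\lambda),$$
where $N^{D/N}(\lambda)$ counts eigenvalues of $\Delta_c^{D/N} - \tfrac{1}{4}$ below $\lambda^2$, so it suffices to show that both bounds have the same asymptotic $\frac{\operatorname{Vol}(X)}{4\pi}\lambda^2$.

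For the $Y_1$ component, a compact surface with boundary, the classical Weyl law for the Dirichlet or Neumann Laplacian gives $\sim \frac{\operatorname{Vol}(Y_1)}{4\pi}\lambda^2$. For $Y_2$ the operator is block diagonal in the $\theta$-Fourier decomposition. The zero mode lives on $(R, c)$ with Dirichlet at $r = c$ and a Dirichlet/Neumann condition at $r = R$; this is a 1D Sturm--Liouville problem on a bounded interval, contributing only $O(\lambda)$ eigenvalues, which is negligible. For a nonzero mode $n$, conjugation by $e^{r/2}$ turns the radial operator into $-\partial_r^2 + \tfrac14 + 4\pi^2 n^2 e^{2r}$ on $L^2((R,\infty), dr)$, which has discrete spectrum because the potential blows up at $+\infty$; the semiclassical (Bohr--Sommerfeld) Weyl asymptotic gives
$$N_n(\lambda) \sim \frac{1}{\pi}\int_{R}^{\log(\lambda/(2\pi|n|))} \sqrt{\lambda^2 - 4\pi^2 n^2 e^{2r}}\, dr.$$
Substituting $u = 2\pi |n| e^r/\lambda$, summing over $n \ne 0$, and recognizing the resulting Riemann sum as $\frac{\lambda^2 e^{-R}}{4\pi}$ (equivalently: this is $(2\pi)^{-2}$ times the phase-space volume of $\{\xi^2 + e^{2r}\eta^2 \leq \lambda^2\}$ over $Y_2$), one obtains $\sum_{n \ne 0} N_n(\lambda) \sim \frac{\operatorname{Vol}(Y_2)}{4\pi}\lambda^2$. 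Combined with the $Y_1$ contribution this yields $N^{D/N}(\lambda) \sim \frac{\operatorname{Vol}(X)}{4\pi}\lambda^2$, proving the first assertion with $C = \operatorname{Vol}(X)/(4\pi)$.

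The second assertion follows immediately: $|\{j : h/2 \leq h_j \leq 2h\}| = N(2/h) - N((2h)^{-1} - 0)$ is asymptotically $C(4 - 1/4) h^{-2}$ as $h \to 0$, so it is sandwiched between two positive constants times $h^{-2}$ for all small $h$. The main obstacle in the argument is the uniform semiclassical remainder for the Fourier-mode sum on $Y_2$: the error in each 1D Weyl law must aggregate to $o(\lambda^2)$ across the $O(\lambda)$ modes that contribute nontrivially. This can be handled either by a direct $h$-pseudodifferential treatment of the 2D symbol $\xi^2 + e^{2r}\eta^2$, viewing the exponentially growing term as a confining potential in $\eta$-space, or by a Tauberian argument on the heat-trace of the cusp-tail operator; alternatively, one can invoke Colin de Verdi\`ere's analysis in \cite{CdV3}, where the same Weyl asymptotic for $\Delta_c$ appears.
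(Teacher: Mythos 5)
Your proof is essentially correct, but it takes a genuinely different route from the paper: the paper's proof is a one-line reduction, observing that $N(\lambda)$ differs from the full eigenvalue counting function of $\Delta_c$ only by an additive constant (finitely many non-positive eigenvalues are omitted) and then quoting the Weyl law for pseudo-Laplacians from \cite[Theorem 6]{CdV3}, whereas you give a self-contained Dirichlet--Neumann bracketing argument. Your outline is sound and the bookkeeping is right: the compact piece gives $\frac{\mathrm{Vol}(Y_1)}{4\pi}\lambda^2$, the zero mode in the cusp contributes only $O(\lambda)$ because of the boundary condition at $r=c$, and your Riemann-sum evaluation of the nonzero-mode contribution does reproduce $\frac{\mathrm{Vol}(Y_2)}{4\pi}\lambda^2$ (it matches the phase-space volume $(2\pi)^{-2}\mathrm{vol}\{\xi^2+e^{2r}\eta^2\leq\lambda^2\}$, since $\mathrm{Vol}(Y_2)=e^{-R}$), and the deduction of the two-sided bound on $|\{j:\,h/2\leq h_j\leq 2h\}|$ from $N(\lambda)\sim C\lambda^2$ is immediate. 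The one step you assert rather than prove is exactly the one you flag: the one-dimensional Weyl asymptotics must hold with an error that is uniform over the $O(\lambda)$ contributing Fourier modes $n$. This can indeed be closed, e.g.\ by oscillation theory for the monotone confining potentials $4\pi^2n^2e^{2r}$ (after translating $r\mapsto r+\ln(2\pi|n|)$ all modes reduce to the single potential $e^{2r}$ with a shifted left endpoint, and the per-mode error is $O(1+\ln(1+\lambda e^{-R}/|n|))$, aggregating to $O(\lambda\log\lambda)=o(\lambda^2)$ --- this is precisely the source of the well-known second term in the cusp Weyl law); your alternative of simply invoking \cite{CdV3} is exactly what the paper does. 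What your approach buys is an explicit constant $C=\mathrm{Vol}(X)/(4\pi)$ and independence from the reference; what the paper's citation buys is brevity and the sharper remainder already established there. A cosmetic point: the Neumann form domain on $Y_1$ should just be $H^1(Y_1)$, since the zero-mode constraint only concerns $r\geq c$, which lies in $Y_2$.
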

\begin{proof} 
Actually, $N(\lambda)$ is, up to some additive constant, the number of eigenvalues of $\Delta_c$ that are not greater than $\lambda^2 + \frac{1}{4}$. The result is then proved in 
\cite[Theorem 6]{CdV3}. 
\end{proof}

\subsection{Ellipticity in the mean}

\begin{lem}
Let $a \in S_{\rm comp}^0(X)$ be a symbol and assume that $a_{|S^*X} = 0$. Then 
\begin{equation}\label{boundsell}
\|\Op_{h_j}(a)u_j\|^2_{L^2} \leq C|Q_j|^2h_j^3 \int_{\R / \Z}{\int_{\R}{\frac{|a(c,\theta,\xi,0)|^2}{(|\xi|^2-1)^2}d\xi}d\theta} + \mathcal{O}(h_j^2),
\end{equation} 
where the constant in the $\mathcal{O}(h_j^2)$ depends only on some $S^0$ seminorms of $a$ and on $\pi(\supp(a))$, $C$ is universal and $Q_j$ is the constant of Corollary \ref{defi_alphaq}.
\end{lem}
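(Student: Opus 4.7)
The plan is to use the vanishing $a|_{S^*X} = 0$, together with the near-eigenfunction equation from Corollary~\ref{defi_alphaq}, to reduce $\Op_{h_j}(a) u_j$ to the action of an $S^{-2}$-operator on the singular source $h_j^2 Q_j \delta_c$ (which is controlled by Proposition~\ref{singular_S-2}) plus small errors. The principal symbol of $h_j^2\Delta$ in cusp coordinates is $p(r,\theta,\xi,\eta) = \xi^2 + e^{2r}\eta^2$, so $S^*X = \{p = 1\}$ is a smooth hypersurface of $T^*X$ (its only critical point is on the zero section, where $p = 0$). The condition $a|_{S^*X} = 0$ then yields a smooth factorization $a = (p-1)b$ with $b \in S^{-2}_{\rm comp}(X)$: locally near $\{p=1\}$ this is the division lemma applied to the defining function $p-1$, while at large momenta $b \sim a/p \in S^{-2}$. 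The seminorms of $b$ and $\pi(\supp b) \subset \pi(\supp a)$ are controlled by $a$. Crucially, on the singular circle $(p-1)|_{\eta=0} = \xi^2 - 1$, so $b(c,\theta,\xi,0) = a(c,\theta,\xi,0)/(\xi^2-1)$, which will produce exactly the weight in \eqref{boundsell}.

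In cusp coordinates a direct calculation gives $h_j^2 \Delta = \Op_{h_j}(p) + i h_j \Op_{h_j}(\xi)$ for the Weyl quantizations, so the Moyal composition formula, together with the $O(h_j)_{\Psi^{-1}}$ chart-junction errors inherent in the partition-of-unity quantization of Section~1.3, yields
\[
\Op_{h_j}(a) = \Op_{h_j}(b)\bigl(h_j^2\Delta - 1\bigr) + h_j T_j,
\]
where $T_j \in \Psi^{-1}$ is uniformly bounded on $L^2$ with operator norm controlled by $S^0$-seminorms of $a$. Inserting Corollary~\ref{defi_alphaq},
\[
\Op_{h_j}(a) u_j = h_j^2 Q_j \Op_{h_j}(b) \delta_c + \frac{h_j^2}{4}\Op_{h_j}(b) u_j - h_j T_j u_j,
\]
where the last two terms are $O(h_j)_{L^2}$ uniformly in $j$ by $L^2$-boundedness of $\Op_{h_j}(b)$ and $T_j$ and the normalization $\|u_j\|_{L^2}=1$. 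Squaring and applying Proposition~\ref{singular_S-2} to $b$,
\[
\|\Op_{h_j}(a) u_j\|^2_{L^2} \leq C h_j^3 |Q_j|^2 \int_{\R/\Z}\!\int_\R \frac{|a(c,\theta,\xi,0)|^2}{(\xi^2 - 1)^2}\, d\xi\, d\theta + C h_j^4 |Q_j|^2\|b\|^2_{S^{-2}} + O(h_j^2).
\]

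The remaining task is to fold the $C h_j^4 |Q_j|^2 \|b\|^2_{S^{-2}}$ term into $O(h_j^2)$, which amounts to showing $h_j^2 |Q_j|^2 = O(1)$. This follows from the explicit form of the zero mode in Corollary~\ref{defi_alphaq}: the squared $L^2$-norm of $\alpha_j e^{r/2}\sin((r-c)/h_j)$ with respect to the Riemannian measure $e^{-r}dr\,d\theta$ equals $|\alpha_j|^2\int_{c_0}^c \sin^2((r-c)/h_j)\,dr = |\alpha_j|^2(c-c_0)/2 + O(h_j)$, and since this is bounded by $\|u_j\|^2_{L^2} = 1$ we get $|\alpha_j| = O(1)$, hence $|Q_j| = O(h_j^{-1})$ and $h_j^4|Q_j|^2 = O(h_j^2)$.

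The main technical obstacle is tracking the symbol classes of $b$ and the Moyal remainders globally under the paper's partition-of-unity quantization, and then matching the spatial-support hypothesis of Proposition~\ref{singular_S-2}; this is handled by splitting $a = \chi a + (1-\chi)a$ with $\chi$ a cutoff equal to $1$ near $\{r=c\}$ and supported in $\{c-\eps < r < c+\eps\}$. The piece $\chi a$ enters Proposition~\ref{singular_S-2} directly, while for $(1-\chi)a$ the corresponding $b$ has support disjoint from $\{r=c\}$ so $\Op_{h_j}(b)\delta_c = O(h_j^\infty)_{L^2}$ and only contributes to the $O(h_j^2)$ remainder.
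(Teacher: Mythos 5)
Your proof follows the same route as the paper: factor $a=(p-1)b$ with $b\in S^{-2}_{\rm comp}$, write $\Op_{h_j}(a)=\Op_{h_j}(b)(h_j^2\Delta-1)+\mathcal{O}_{L^2\to L^2}(h_j)$, insert $(h_j^2\Delta-1)u_j = h_j^2 Q_j\delta_c+\tfrac{h_j^2}{4}u_j$ from Corollary~\ref{defi_alphaq}, split $b$ by a cutoff near and away from $\{r=c\}$, and use the wave-front-set argument for the far piece and Proposition~\ref{singular_S-2} for the near piece. (The paper calls the far cutoff $\psi$ and yours is $1-\chi$; same thing. Also a small slip: ``the piece $\chi a$ enters Proposition~\ref{singular_S-2} directly'' should read $\chi b$, since the proposition needs an $S^{-2}$ symbol, but your intent is clear.)

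The one genuinely new ingredient you add is the verification that the lower-order term $C|Q_j|^2h_j^4\|b\|_{S^{-2}}^2$ produced by Proposition~\ref{singular_S-2} is in fact $\mathcal{O}(h_j^2)$ with a $j$-independent constant, which is what the statement of the lemma promises. This requires $h_j^2|Q_j|^2=\mathcal{O}(1)$, equivalently $|\alpha_j|=\mathcal{O}(1)$, and your argument for it --- compare the $L^2$ norm of the zero mode $\alpha_j e^{r/2}\sin\frac{r-c}{h_j}$ against $\|u_j\|_{L^2}=1$, using that $\int_{c_0}^c\sin^2\frac{r-c}{h_j}\,dr\to\frac{c-c_0}{2}>0$ --- is correct. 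The paper's proof does not address this term at all; it simply says ``Proposition~\ref{singular_S-2} gives us the upper bound'' and stops. So your write-up closes a small but real gap in the paper's proof as stated. (One could also absorb that term only after averaging over $h/2\leq h_j\leq 2h$ using $\sum_j|Q_j|^2h_j^4<\infty$, which is all the subsequent Proposition~\ref{mean-ell-est} needs, but your pointwise bound is cleaner and makes the lemma literally true as stated.)
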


\begin{proof}
Write $a(x,\xi) = b(x,\xi)(|\xi|^2-1)$, where $a$ and $b$ have same (compact) space support and $b$ is $S^{-2}$. Then $\Op_{h_j}(a) = \Op_{h_j}(b)(h_j^2\Delta-1) + \mathcal{O}(h)$, the $\mathcal{O}$ referring to $L^2 \rightarrow L^2$ operator norm, and the constant satisfies the relevant dependencies. \\
Thus, $\Op_{h_j}(a)u_j = Q_jh_j^2\Op_{h_j}(b)\delta_c + \mathcal{O}_{L^2}(h_j)$. Let $\psi$ be a smooth function from $X$ to $[0,1]$ that is $1$ everywhere, except on $\{c-\eps < r < c+\eps\}$, and that is zero on $\{2|r-c| < \eps\}$. Therefore, we may write $\Op_{h_j}(a)u_j = Q_jh_j^2\Op_{h_j}(b\psi)\delta_c + Q_jh_j^2\Op_{h_j}(b(1-\psi))\delta_c + \mathcal{O}(h_j)$. \\
Now, since the phase space support of $b$ does not meet the wave front set of $\delta_c$ (which is $\{r=c,\eta = 0\}$), $\Op_{h_j}(b\psi)\delta_c$ is a smooth $\mathcal{O}(h_j^{\infty})$ function (with the required dependencies for the constants). Besides, proposition \ref{singular_S-2} gives us the upper bound for $\|\Op_h(b(1-\psi))\delta_c\|^2_{L^2}$. 
\end{proof}

\begin{prop}[Ellipticity in the mean]
\label{mean-ell-est}
Let $a_j \in S_{\rm comp}^0$ for every $j \geq 1$ and assume that $\cup_j(\pi(\supp(a_j))\subset \mathcal{K}$ for some fixed compact set $\mathcal{K}\subset X$ and that the family is bounded in $S^0$. Assume that for each $j$, $\left(a_j\right)_{|S^* X} = 0$. Then $$h^2\sum_{h/2 \leq h_j \leq 2h}{\|\Op_{h_j}(a_j)u_j\|^2_{L^2}} \leq Ch\sup\{\|a_j\|^2_{S^0}\} + \mathcal{O}(h^2),$$ where $\|\cdot\|_{S^0}$ is some $S^0$ seminorm, and $C$ is universal.
\end{prop}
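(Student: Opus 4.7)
The plan is to apply the preceding lemma to each $a_j$ individually and then aggregate, using Corollary~\ref{ef_are_ell2} and Proposition~\ref{Weyl}. The lemma gives, pointwise in $j$,
\[
\|\Op_{h_j}(a_j)u_j\|_{L^2}^2 \leq C|Q_j|^2 h_j^3\, \mathcal{I}_j + \mathcal{O}(h_j^2), \qquad \mathcal{I}_j := \int_{\R/\Z}\int_{\R}\frac{|a_j(c,\theta,\xi,0)|^2}{(|\xi|^2-1)^2}\,d\xi\, d\theta,
\]
where $C$ is universal and the implicit $\mathcal{O}$ depends only on fixed $S^0$-seminorms of $a_j$ and on $\pi(\supp(a_j))$; under our hypotheses (boundedness of the family in $S^0$, supports in a fixed $\mathcal{K}$), both dependencies are uniform in $j$.

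The first substantive step is to show that $\mathcal{I}_j \leq C\|a_j\|_{S^0}^2$ for a single $S^0$-seminorm. The hypothesis $(a_j)|_{S^*X}=0$ forces $a_j(c,\theta,\pm 1,0)=0$, since on the cusp $S^*X\cap\{r=c,\eta=0\}$ is cut out by $\xi^2=1$. By Hadamard's lemma one writes $a_j(c,\theta,\xi,0) = (|\xi|^2-1)\,b_j(c,\theta,\xi,0)$ with $b_j$ smooth in $\xi$, so the integrand equals $|b_j|^2$: near $\xi=\pm 1$ it is controlled by a $C^1$-bound on $a_j$ (itself absorbed into an $S^0$-seminorm), while for $|\xi|\geq 2$ it is majorised by $C|a_j|^2/\xi^4$, integrable since $a_j\in S^0$ is uniformly bounded. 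This yields $\mathcal{I}_j\leq C\sup_j\|a_j\|_{S^0}^2$.

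Granted this, multiply by $h^2$ and sum over $h/2 \leq h_j \leq 2h$. Since $h_j\geq h/2$ in this window, $h_j^3 \leq 2h_j^4/h$, and the main term is bounded by
\[
\frac{2C}{h}\cdot h^2\cdot \sup_j\|a_j\|_{S^0}^2\cdot \sum_{h/2\leq h_j\leq 2h}|Q_j|^2 h_j^4 \leq C'\, h\, \sup_j\|a_j\|_{S^0}^2,
\]
the final inequality using Corollary~\ref{ef_are_ell2} to bound the partial sum by a universal constant. For the error, Weyl's law (Proposition~\ref{Weyl}) shows the window contains $\mathcal{O}(h^{-2})$ indices, each contributing a uniform $\mathcal{O}(h^2)$; after the factor $h^2$ the total error is $\mathcal{O}(h^2)$.

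The main obstacle is the first substep: producing a single $S^0$-seminorm that controls $\mathcal{I}_j$ uniformly across the family. This relies crucially on $(a_j)|_{S^*X}=0$ to cancel the zero of $(|\xi|^2-1)^2$ at $\xi=\pm 1$, together with the uniform compactness of supports ensuring only a bounded range of $\xi$ needs the delicate Taylor argument. Once this is secured, the aggregation uses nothing beyond the two a priori bounds $\sum|Q_j|^2 h_j^4<\infty$ and the Weyl law, both already established.
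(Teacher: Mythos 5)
Your proof is correct and follows the same route as the paper: apply the pointwise bound \eqref{boundsell} to each $a_j$, observe that the error constants are uniform because the family is bounded in $S^0$ with supports in a fixed compact set, exploit $h/2\leq h_j\leq 2h$ to turn $h^2 h_j^3$ into $O(h\,h_j^4)$, and then close via Corollary~\ref{ef_are_ell2} and the Weyl law. The paper's proof is terse and simply asserts $I\leq \sup\|a_j\|_{S^0}^2$ for some suitable seminorm; you supply the justification via Hadamard's lemma (using $(a_j)|_{S^*X}=0$ to cancel the double zero of $(|\xi|^2-1)^2$ at $\xi=\pm 1$, with the $|\xi|^{-4}$ decay handling integrability at infinity). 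This is the right argument and fills a gap the paper leaves implicit.
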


\begin{proof}
Let $I$ be the supremum over $j \geq 0$ of the $$C\int_{\R / \Z}{\int_{\R}{\frac{|a_j(c,\theta,\xi,0)|^2}{(|\xi|^2-1)^2}d\xi}d\theta},$$ where $C$ is the constant in \eqref{boundsell}  and $K$ is the constant in the $\mathcal{O}(h_j^2)$ of \eqref{boundsell}. Then, using Weyl's law and corollary \ref{ef_are_ell2}:
$$h^2\sum_{h/2 \leq h_j \leq 2h}{\|\Op_{h_j}(a_j)u_j\|^2_{L^2}} \leq h\sum_{h/2 \leq h_j \leq 2h}{2I|Q_j|^2h_j^4 + 2Kh^3} \leq 2hC'I + K'h^2,$$ and we conclude by considering that for some suitable $\|\cdot\|_{S^0}$, $I \leq \sup\{\|a_j\|_{S^0}^2\}$. 
\end{proof}

\subsection{Bound on the variance}
In this section, we shall prove some bounds on the variance $M(a,\lambda)$ defined in Definition \ref{def1}.
\begin{lem}
Let $b \in C^\infty_c(T^* X)$. Then, there is some universal constant $C$ such that for all $h > 0$ small 
\[h^2\sum_j{\|\Op_h(b)u_j\|_{L^2}^2} \leq C\int_{T^*X}{|b|^2} + \mathcal{O}(h).\]
\end{lem}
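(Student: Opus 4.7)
My plan is to reduce the sum to the Hilbert--Schmidt norm squared of $\Op_h(b)$, and then compute that norm using the exact Weyl Hilbert--Schmidt identity on $\R^2$ in local charts.

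The first step uses Bessel's inequality for orthonormal families. Since $\{u_j\}_j$ is orthonormal in $L^2(X)$ (one can extend it to a full orthonormal basis $\{u_j\}\cup\{v_k\}$), and since $\Op_h(b)^*\Op_h(b)$ is positive self-adjoint,
$$\sum_j \|\Op_h(b)u_j\|_{L^2}^2 = \sum_j\langle \Op_h(b)^*\Op_h(b)u_j,u_j\rangle \leq \Tr\bigl(\Op_h(b)^*\Op_h(b)\bigr) = \|\Op_h(b)\|_{\mathrm{HS}}^2.$$
It then suffices to show $\|\Op_h(b)\|_{\mathrm{HS}}^2 \leq C h^{-2}\int_{T^*X}|b|^2 + \mathcal{O}(h^{-1})$.

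For this, I would decompose $\Op_h(b) = \sum_i T_i$ with $T_i = \chi_i\varphi_i^*((\varphi_i)_*b)^w(\varphi_i)_*\chi_i$. Since $b$ is compactly supported in $X$, only finitely many $T_i$ are nonzero. Expanding,
$$\|\Op_h(b)\|_{\mathrm{HS}}^2 = \sum_{i,j}\Tr(T_i^*T_j).$$
The key input is the exact Weyl identity $\|c^w(x,hD)\|_{\mathrm{HS}}^2 = (2\pi h)^{-2}\int|c|^2$ on $\R^2$. Combined with the semiclassical symbolic calculus applied to the composition $T_i^*T_j$ (whose principal symbol in the overlap chart involves $\chi_i\chi_j|b|^2$) and a change of variables in each chart, I get $|\Tr(T_i^*T_j)| \leq Ch^{-2}\int \chi_i\chi_j|b|^2 + \mathcal{O}(h^{-1})$; for pairs with $\supp\chi_i\cap\supp\chi_j=\emptyset$, pseudo-locality of the kernel yields $\mathcal{O}(h^\infty)$. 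Summing over $i,j$,
$$\|\Op_h(b)\|_{\mathrm{HS}}^2 \leq Ch^{-2}\int\Bigl(\sum_i\chi_i\Bigr)^{\!2}|b|^2 + \mathcal{O}(h^{-1}),$$
and since the cover $(U_i)$ has bounded multiplicity (by construction in Section 1), Cauchy--Schwarz against $\sum_i\chi_i^2=1$ gives $\bigl(\sum_i\chi_i\bigr)^2\leq N$ pointwise for some integer $N$. Multiplication by $h^2$ then concludes.

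The main obstacle is controlling the off-diagonal terms $\Tr(T_i^*T_j)$ without inheriting a constant that grows with $\supp b$; a naive Cauchy--Schwarz $\|\sum T_i\|_{\mathrm{HS}}^2\leq N_b\sum\|T_i\|_{\mathrm{HS}}^2$ would fail, since the number $N_b$ of nonzero $T_i$ depends on $b$. The resolution is to use the symbolic composition formula to bound each cross-term by a $\chi_i\chi_j$-weighted integral of $|b|^2$, after which the partition-of-unity summation identity produces a constant depending only on the fixed geometric data (cover and partition), i.e.\ universal in $b$.
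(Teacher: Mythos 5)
Your first step (Bessel / the bound $\sum_j\|\Op_h(b)u_j\|^2\le\|\Op_h(b)\|_{\mathrm{HS}}^2=\Tr(\Op_h(b)^*\Op_h(b))$) is exactly the paper's opening move, so the two proofs agree up to that point. From there the routes diverge. The paper first applies the global composition calculus to write $\Op_h(b)^*\Op_h(b)=\Op_h(|b|^2+f_h)$ with $f_h=\mathcal{O}(h)$ in every seminorm, and then invokes the trace formula proved in the appendix, $\Tr(\Op_h(a))=(2\pi h)^{-2}\int_{T^*X}a$; the universal constant is then just $(2\pi)^{-2}$ and the $\mathcal{O}(h)$ absorbs all $b$-dependence. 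You instead expand $\|\Op_h(b)\|_{\mathrm{HS}}^2=\sum_{i,j}\Tr(T_i^*T_j)$ and estimate each cross-term in charts. This is a legitimate alternative, and in effect it re-proves a version of the appendix's trace formula inline rather than citing it; the paper's factorization through Proposition A.1 is cleaner, since it hides all the chart bookkeeping in a reusable lemma, while your version has to redo it for the pair $T_i^*T_j$, where the exact Weyl Hilbert--Schmidt identity does not directly apply for $i\neq j$ (one has a chart-change conjugation sitting in the middle, so you genuinely need the composition calculus and a diagonal-kernel trace computation in the overlap chart, not the identity $\|c^w\|_{\mathrm{HS}}^2=(2\pi h)^{-2}\int|c|^2$ by itself). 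Two small points in your write-up: the principal symbol of $T_i^*T_j$ is $\chi_i^2\chi_j^2|b|^2$, not $\chi_i\chi_j|b|^2$, since each $T_i$ has symbol $\chi_i^2 b$ to leading order; this is actually in your favor, because $\sum_{i,j}\chi_i^2\chi_j^2=\bigl(\sum_i\chi_i^2\bigr)^2=1$ exactly, so you do not need any bounded-multiplicity hypothesis on the cover (which the paper never states). Your concern that the number $N_b$ of contributing charts depends on $b$ is real but only affects the $\mathcal{O}(h^{-1})$ error terms, and those are allowed to depend on $b$; the leading constant comes out universal either way. So the proposal is correct in outline, with the symbol exponents fixed, but it is a more hands-on path than the paper's, which prefers to quote the appendix trace formula.
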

\begin{proof}
We write 
\[h^2\sum_j{\|\Op_h(b)u_j\|_{L^2}^2} \leq h^2\|\Op_h(b)\|^2_{\rm{HS}} = h^2\Tr\left(\Op_h(b)^*\Op_h(b)\right) = h^2\Tr\left(\Op_h(|b|^2+f_h)\right),\] 
where $f_h$ are smooth functions such that $\supp\,f_h \subset \supp\,b$ is compact, and $\|f_h\|_{S^p} = \mathcal{O}(h)$ for each $p \in \Z$. Here ${\rm HS}$ means the Hilbert-Schmidt norm. We finally apply the trace formula in \ref{trace} (in the appendix).
\end{proof}

\begin{prop}\label{boundonY}
Let $a \in S_{\rm comp}^0(X)$. There is a universal constant $C$ such that for all $h > 0$ small, 
\[Y(a,h)^2 \leq C\int_{S^*M}{|a|^2} + \mathcal{O}(h).\]
\end{prop}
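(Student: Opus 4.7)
The plan is to split $a = a_1 + a_2$, with $a_1 := a \cdot \chi(|\xi|^2)$ and $a_2 := a(1-\chi(|\xi|^2))$ for a fixed cutoff $\chi \in C_c^\infty(\R)$, $\chi(1)=1$, supported in $[1/2,2]$; then $a_1 \in C_c^\infty(T^*X)$ is compactly supported, $a_2 \in S^0_{\rm comp}(X)$ vanishes on $S^*X$, and $Y(a,h)^2 \leq 2Y(a_1,h)^2 + 2Y(a_2,h)^2$. Proposition \ref{mean-ell-est} applied to the constant family $a_2$ immediately gives $Y(a_2,h)^2 = O(h)$.

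For the main part $Y(a_1,h)^2$, standard symbolic calculus first gives $\|\Op_{h_j}(a_1)u_j\|^2 = \langle\Op_{h_j}(|a_1|^2)u_j,u_j\rangle + O(h)$. The key reduction uses the scaling relation $\Op_{h_j}(b) = \Op_h(b(x,s_j\xi))$ with $s_j := h_j/h \in [1/2,2]$, together with the eigenvalue equation $h\sqrt{\Delta_c - 1/4}\,u_j = s_j^{-1}u_j$: the difference $|a_1|^2(x,s_j\xi) - |a|^2(x,\xi/|\xi|)$ vanishes on $\{|\xi|=s_j^{-1}\}$ (the $h$-semiclassical microlocal support of $u_j$) and factors smoothly as $(|\xi|-s_j^{-1})r_{s_j}(x,\xi)$ with $r_{s_j}$ uniformly bounded and compactly supported in $T^*X\setminus\{0\}$. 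Symbolic calculus and the eigenvalue equation then yield
\[
\Op_{h_j}(|a_1|^2)\,u_j = \Op_h(\tilde g)\,u_j + O(h), \qquad \tilde g(x,\xi) := |a|^2(x,\xi/|\xi|)\,\rho(|\xi|^2),
\]
for a cutoff $\rho\in C_c^\infty((0,\infty))$ with $\rho=1$ on $[1/4,4]$.

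Since $\Op_h(\tilde g)$ no longer depends on $j$, the sum becomes a spectral trace. Fix $\chi_0 \in C_c^\infty((0,\infty))$ with $\chi_0 = 1$ on $[1/4,4]$; then $\chi_0(h^2(\Delta_c - 1/4))u_j = u_j$ for $h_j \in [h/2,2h]$, and the positivity $\tilde g \geq 0$ (combined with sharp G\aa rding to handle the transition from the sharp window $[h/2,2h]$ to the smoothed cutoff) gives
\[
\sum_{h_j \in [h/2,2h]}\langle\Op_h(\tilde g)u_j,u_j\rangle \leq \Tr\bigl(\Op_h(\tilde g)\,\chi_0(h^2(\Delta_c - 1/4))\bigr) + O(h^{-1}).
\]
By functional calculus, $\chi_0(h^2(\Delta_c - 1/4))$ is a semiclassical PDO with principal symbol $\chi_0(|\xi|^2)$, so the trace formula \ref{trace} gives the right-hand side as $Ch^{-2}\int\tilde g\,\chi_0(|\xi|^2)\,dxd\xi + O(h^{-1})$. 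Polar coordinates $\xi = \rho\omega$ evaluate the integral to a universal constant times $\int_{S^*X}|a|^2\,d\mu$ (using $\tilde g(x,\rho\omega) = |a(x,\omega)|^2\rho_0(\rho^2)$), and multiplying through by $h^2$ yields $Y(a_1,h)^2 \leq C'\int_{S^*X}|a|^2 + O(h)$. Combining with Step~1 completes the proof.

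The principal technical hurdle is the scaling/factorization step that permits replacing the $j$-dependent quantization $\Op_{h_j}$ by the fixed-scale $\Op_h$. It depends crucially on $u_j$ being a strict eigenfunction of $\Delta_c - 1/4$, together with the functional calculus for $\Delta_c$ used in the trace estimate. Since $\Delta_c$ is self-adjoint with compact resolvent, $\chi_0(h^2(\Delta_c - 1/4))$ is well-defined; its representation as a semiclassical PDO modulo $O(h)$ follows from Helffer-Sj\"ostrand away from the singular circle, while boundary contributions are controlled via Corollary \ref{ef_are_ell2}.
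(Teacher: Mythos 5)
Your decomposition at the start is close to the paper's (the paper uses the radialized symbol $b(x,\xi)=\chi(|\xi|)a(x,\xi/|\xi|)$ and controls $Y(a-b,h)$ by Proposition \ref{mean-ell-est}), and your rescaling identity $\Op_{h_j}(f)=\Op_h(f(x,s_j\xi))$ is the same one the paper exploits. But after that point your argument diverges, and the key step has a real gap.

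The problem is the functional calculus for the pseudo-Laplacian. You want $\chi_0(h^2(\Delta_c-\tfrac14))$ to be an $h$-pseudodifferential operator with principal symbol $\chi_0(|\xi|^2)$, so that Proposition \ref{trace} applies to $\Tr(\Op_h(\tilde g)\chi_0(h^2(\Delta_c-\tfrac14)))$. But $\Delta_c$ is not a pseudodifferential operator: it is a Friedrichs extension on the closed subspace $L^2_{0,c}$, and its eigenfunctions satisfy $(\Delta-\lambda_j^2-\tfrac14)u_j=Q_j\delta_c$ (Corollary \ref{defi_alphaq}), so $\Delta_c$ differs from $\Delta$ by a distributional correction supported on the singular circle. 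The Helffer-Sj\"ostrand formula expresses $\chi_0(P)$ through the resolvent $(P-z)^{-1}$, which is a global object; there is no "away from the singular circle" version of it, and nothing in Corollary \ref{ef_are_ell2} (which only gives $\ell^2$-summability of the constants $h_j^2Q_j$, useful for testing against a single symbol) produces a PDO representation of $\chi_0(h^2(\Delta_c-\tfrac14))$. Without that representation, the trace step and hence the whole argument stalls. A related soft spot is your factorization of $|a_1|^2(x,s_j\xi)-\tilde g(x,\xi)$ by $|\xi|-s_j^{-1}$ together with the eigenvalue equation for $\sqrt{\Delta_c-\tfrac14}$: this again asks for microlocal control of a function of $\Delta_c$ applied to $u_j$, which runs into the same structural difficulty.

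The paper avoids all of this. Instead of spectrally localizing to the window $[h/2,2h]$, it replaces $\Op_{h_j}(b)$ by $\Op_h(b)+\Op_{h_j}(b-b_{1/\tau_j})$, observes that $(b-b_{1/\tau_j})|_{S^*X}=0$ so that Proposition \ref{mean-ell-est} controls the second term, and then bounds $h^2\sum_{h/2\le h_j\le 2h}\|\Op_h(b)u_j\|^2$ by the sum over \emph{all} $j$ via the Hilbert--Schmidt norm: $h^2\sum_j\|\Op_h(b)u_j\|^2\le h^2\|\Op_h(b)\|^2_{\rm HS}=h^2\Tr(\Op_h(|b|^2+O(h)))$, which needs only Proposition \ref{trace} for a compactly supported symbol and nothing whatsoever about $\Delta_c$. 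Since $b$ has compact support in $T^*X$, dropping the spectral cutoff costs nothing beyond the universal constant $C$, which is all the statement asks for. If you want to salvage your approach, you would need an actual lemma establishing a parametrix for $\chi_0(h^2(\Delta_c-\tfrac14))$ (or for $(h^2\Delta_c-z)^{-1}$) as a PDO plus a controllable singular-circle remainder; the paper deliberately sidesteps having to prove such a statement.
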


\begin{proof}
Let us denote $f_{\tau}(x,\xi) = f(x,\tau\xi)$ for any $\tau > 0$ and any symbol $f$. 
Let $b \in \CIc(T^*X)$ be such that $b(x,\xi) = \chi(|\xi|)a\left(x,\frac{\xi}{|\xi|}\right)$, where $\chi\in C_c^\infty((0,\infty))$ is real valued such that $\chi=1$ on $[2/5,5/2]$. 
From Proposition \ref{mean-ell-est} (with the symbols $a-b$), $Y(a-b,h)=\mathcal{O}(h^{1/2})$, so $Y(a,h)^2 \leq 2Y(b,h)^2+\mathcal{O}(h^{1/2})^2 = 2Y(b,h)^2+\mathcal{O}(h)$. So we may focus on $Y(b,h)$. 

Now, since $\Op_{h\tau}(f) = \Op_h(f_{\tau})$ (because of the quantization procedure), let us denote $\tau_j = h_j/h$. Then, \begin{align*}Y(b,h)^2 &= h^2\sum_{h/2 \leq h_j \leq 2h}{\|\Op_h(b_{\tau_j})u_j\|^2_{L^2}} \\&\leq 2h^2\sum_{h/2 \leq h_j \leq 2h}{\|\Op_{h_j}(b-b_{1/\tau_j})\|^2_{L^2 \rightarrow L^2}} + 2h^2\sum_{h/2 \leq h_j \leq 2h}{\|\Op_h(b)u_j\|_{L^2}^2}.\end{align*}
Since $\int_{T^*X}{|b|^2} \leq C\int_{S^*X}{|a|^2},$
by the previous lemma, it is enough to prove that 
\[\left\|h^2\sum_{h/2 \leq h_j \leq 2h}\Op_{h_j}(b-b_{1/\tau_j})u_j\right\|_{L^2\rightarrow L^2}^2 = \mathcal{O}(h).\] 
Now, since $1/2 \leq \tau_j \leq 2$, the $b-b_{1/\tau_j}$ are bounded in $S^0$, have a uniform support in space $\pi(\supp(b-b_{1/\tau_j}))\subset \pi(\supp(a))$ support, the results follows from Proposition \ref{mean-ell-est}.
\end{proof}

\begin{prop}[Variance bound]\label{variancebound}
If $a \in S^0_{\rm comp}(X)$, then for some universal constant $C$, 
\[M(a,\lambda)^2 \leq C\int_{S^*X}{|a|^2}+\mathcal{O}(\lambda^{-1}).\]
\end{prop}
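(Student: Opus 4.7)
The plan is to combine the single-scale bound on $Y(a,h)^2$ from Proposition \ref{boundonY} with a dyadic decomposition in the frequency parameter $h_j$, closed off via the Weyl law of Proposition \ref{Weyl}. The point is that $Y(a,h)^2$ controls the sum of $\|\Op_{h_j}(a)u_j\|_{L^2}^2$ over a dyadic window of $h_j$ weighted by $h^2$, so stripping the weight and summing the dyadic contributions recovers the full sum $\sum_{\lambda_j\leq\lambda}\|\Op_{h_j}(a)u_j\|_{L^2}^2$ up to an acceptable error.

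First, by Cauchy--Schwarz and $\|u_j\|_{L^2}=1$,
\[M(a,\lambda)^2 \leq \frac{2}{N(\lambda)}\sum_{\lambda_j\leq\lambda}\|\Op_{h_j}(a)u_j\|_{L^2}^2 + 2\left|\int_{S^*X}a\right|^2,\]
and since the Liouville measure was normalized to be a probability measure, $|\int_{S^*X}a|^2\leq \int_{S^*X}|a|^2$ by Jensen. So it suffices to bound the sum on the right.

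Next, $\lambda_j\leq \lambda$ is equivalent to $h_j\geq \lambda^{-1}$, and trivially $h_j\leq h_0:=\lambda_0^{-1}$, so with $h^{(k)}=2^k/\lambda$ for $0\leq k\leq K:=\lceil \log_2(\lambda/\lambda_0)\rceil$ the shells $S_k=\{j:h^{(k)}\leq h_j<2h^{(k)}\}$ partition $\{j:\lambda_j\leq\lambda\}$. Each $S_k$ sits inside the window $[h^{(k)}/2,2h^{(k)}]$ used to define $Y(a,h^{(k)})$, so by Proposition \ref{boundonY},
\[\sum_{j\in S_k}\|\Op_{h_j}(a)u_j\|_{L^2}^2 \leq (h^{(k)})^{-2}Y(a,h^{(k)})^2 \leq C(h^{(k)})^{-2}\int_{S^*X}|a|^2+\mathcal{O}((h^{(k)})^{-1}).\]
Summing over $k$, the main term is bounded by $C\int_{S^*X}|a|^2\cdot\lambda^2\sum_{k\geq 0}4^{-k}\leq C'\lambda^2\int_{S^*X}|a|^2$, and the error by $C\lambda\sum_{k\geq 0}2^{-k}\leq C'\lambda$. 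Dividing by $N(\lambda)\sim c\lambda^2$ from Proposition \ref{Weyl} then yields the claim.

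No serious obstacle is expected: all of the analytic content is already packaged in Proposition \ref{boundonY}, and the dyadic-shell technique is standard. The only quantitative check is that the two geometric series converge, and they do precisely because the factor $h^2$ in $Y(a,h)^2$ cancels the Weyl density $\asymp h^{-2}$ of eigenvalues at scale $h$, so that the per-shell contributions are uniformly bounded and the estimate closes uniformly in $\lambda$.
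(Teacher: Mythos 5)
Your proof is correct and follows essentially the same strategy as the paper's: apply Cauchy--Schwarz to reduce the variance to $\frac{1}{N(\lambda)}\sum_{\lambda_j\leq\lambda}\|\Op_{h_j}(a)u_j\|_{L^2}^2$ plus $\left|\int_{S^*X}a\right|^2$, bound the sum by a dyadic decomposition in $h_j$ using Proposition~\ref{boundonY} on each shell (the $h^2$ weight in $Y$ cancels the eigenvalue density), and finish with the Weyl law of Proposition~\ref{Weyl}. The only cosmetic difference is that you use dyadic shells of ratio $2$ (so the $Y$-windows overlap, giving harmless double-counting) whereas the paper uses ratio $4$ to tile without overlap; the geometric series converge either way.
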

\begin{proof}
Let $h = 1/(2\lambda)$. Let $I :=\int_{S^*X}{|a|^2}$, where $C$ is as above. Then using Cauchy Schwartz for the $\int a$ term and Proposition \ref{boundonY}, we get
\begin{align*}
N(\lambda)M(a,\lambda)^2 &\leq 2N(\lambda)I+ 2\sum_{1 \leq 4^k \leq 4h_0\lambda}{16^{-k}h^{-2}Y(a,4^kh)^2}\\
&\leq 2N(\lambda)I+ 8C\lambda^2\sum_{1 \leq 4^k \leq 4h_0\lambda}{16^{-k}I+4^{-k}h} \leq C'\lambda^2I+\mathcal{O}(\lambda),
\end{align*}
where the sum is over the $k \geq 0$ such that, for some integer $j \geq 0$, $2^{2k-1}h \leq h_j \leq 2^{2k+1}h$: in particular, if $2^{2k-1}h > h_0$ (ie $4^k \leq 4h_0\lambda$), the corresponding term does not contribute.

We conclude using again Proposition \ref{Weyl}. 
\end{proof} 

\section{Egorov theorem}

This section deals with step $2$: similarly to \cite{ZeZw}, we want to prove that propagating some symbol $a$ through the geodesic flow does not change $M(a,\lambda)$ too much. The main difference here is the fact that the operator we study (ie the pseudo-Laplacian) is not the generator of the propagator we use. From a geometric point of view, we solve this by requiring that our symbols have a support far from the singular circle.  

The main result of this section is proposition \ref{egorov-goal}, which gives a precise statement about the idea above.

\subsection{A good set for propagation}
We define
\[\Sigma_T=\{ ( z, \zeta ) \in T^* X : \ \forall \, |t| \leq T , \Phi^t (z , \zeta ) \notin 
T^*X_0\cap \{\eta = 0\}\} ,\]
where $\Phi^t$ is the geodesic flow on $T^*X$ and the region where the coordinate $\eta$ is defined in the cusp.

\begin{prop}
$ \Sigma_T $ is an open set of full measure.
\end{prop}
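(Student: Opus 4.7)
The plan is to handle openness and full measure separately.

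For openness, I would show that the complement $\Sigma_T^c = \bigcup_{|t|\le T}\Phi^{-t}(N)$, with $N := T^*X_0\cap\{\eta=0\}$, is closed. Given $(z_n,\zeta_n)\to(z,\zeta)$ in $\Sigma_T^c$, extract $t_n\in[-T,T]$ with $\Phi^{t_n}(z_n,\zeta_n)\in N$, then a subsequence with $t_n\to t^*\in[-T,T]$. Continuity of $\Phi^{t}$ gives $\Phi^{t^*}(z,\zeta)$ as a limit of points in $N$, whose $\eta$-coordinate is therefore $0$ and whose $r$-coordinate is $\ge c_0$. If $r>c_0$ this limit lies in $N$; if $r=c_0$, then since $\eta=0$ on the non-zero energy shell forces $\xi\neq 0$, the geodesic crosses $\{r=c_0\}$ transversally, so the same orbit visits $N$ at a nearby time in $[-T,T]$. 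Either way $(z,\zeta)\in\Sigma_T^c$.

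For full measure, the decisive input is the conservation law $\dot\eta=-\partial_\theta p=0$, valid throughout the cusp because $p=\xi^2+e^{2r}\eta^2$ is $\theta$-independent. Thus $N$ is foliated by orbit segments, and on $S^*X$ the condition $\eta=0$ forces $\xi=\pm 1$ and a purely radial dynamics $r(t)=r_0\pm 2t$. In particular, for any fixed $r_*>c_0$, every orbit that meets $N$ must cross the one-dimensional cross-section $\Sigma_0:=\{r=r_*,\,\eta=0,\,\xi=\pm1\}\subset S^*X$ during the corresponding cusp visit, since $r$ varies monotonically and ranges over $(c_0,\infty)$. Consequently $\Sigma_T^c\cap S^*X\subseteq F(\R\times\Sigma_0)$, where $F(t,p):=\Phi^t(p)$ is smooth from a two-dimensional manifold into the three-dimensional $S^*X$. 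The image of such a $C^1$ map has zero Liouville measure (a standard consequence of Sard's theorem, or equivalently a Fubini-type bound along the flow). Therefore $\mu(\Sigma_T^c)=0$, i.e.\ $\Sigma_T$ has full measure.

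The only delicate step is the boundary case $r=c_0$ in the openness argument, which is resolved by the transversality observation above; the full-measure part is, once $\dot\eta=0$ is extracted, a routine dimension count using a Poincar\'e-type cross-section.
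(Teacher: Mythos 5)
Your proof is correct in substance, but it takes a genuinely different route from the paper on both halves. For openness, the paper works directly with $\Sigma_T$: for $(z,\zeta)\in\Sigma_T$ the compact orbit segment $\{\Phi^t(z,\zeta):|t|\le T\}$ is at a positive distance $\eps>0$ from the set $\{\eta=0\}$, and equicontinuity of $\{\Phi^t\}_{|t|\le T}$ then propagates this to a whole neighbourhood of $(z,\zeta)$; you instead show that the complement is sequentially closed by extracting a convergent subsequence of hitting times $t_n\in[-T,T]$. The two are close in spirit, and both gloss over the same delicacy at the boundary $\{r=c_0\}$ (the set $T^*X_0\cap\{\eta=0\}$ is not closed, so ``avoids it'' does not automatically give ``positive distance''; and in your argument the transversality fix does not obviously cover the case $t^*=\pm T$ with the trajectory entering $X_0$); you at least flag the issue, which the paper does not. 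For full measure the difference is larger: the paper writes $\Sigma_T=\bigcap_{r\in\Q,\,|r|\le T}\Phi^{-r}\bigl(T^*X\setminus(T^*X_0\cap\{\eta=0\})\bigr)$ and uses only that this bad set is Lebesgue-null and that $\Phi^r$ is a measure-preserving diffeomorphism, so that a countable intersection of full-measure sets has full measure. You instead use the conservation law $\dot\eta=0$ to funnel every bad orbit through the one-dimensional Poincar\'e section $\Sigma_0=\{r=r_*,\,\eta=0,\,\xi=\pm1\}$ and then apply Sard/Fubini to the smooth map $F:\R\times\Sigma_0\to S^*X$. Your route is more geometric and makes the mechanism transparent (it exhibits the bad set as a two-parameter family), at the cost of invoking Sard and of restricting to $S^*X$; the paper's route is shorter and works directly in $T^*X$. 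It is worth noting that the paper's reduction to rational times secretly uses the same conservation law you exploit explicitly: without $\dot\eta=0$ in the cusp it would not be clear that avoiding the null set at all rational $t\in[-T,T]$ forces avoiding it at all real $t\in[-T,T]$, precisely because that set is not closed.
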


\begin{proof}
Let $(z,\zeta) \in \Sigma_T$. Then the set $\{\Phi^t(z,\zeta),\,|t| \leq T\}$ is at positive distance from $\{\eta = 0\}$, say $\eps > 0$. Since the $\Phi^t$, $|t| \leq T$ are equicontinuous (the proof is the same as Heine's theorem), there exists $\eps' > 0$ such that if $(z',\zeta')$ is at distance at most $\eps'$ from $(z,\zeta)$, for every $|t| \leq T$, $\Phi^t(z,\zeta)$ and $\Phi^t(z',\zeta')$ are at distance at most $\eps/2$, thus, $\Phi^t(z',\zeta')$ is at distance at least $\eps/2 > 0$ from $\{\eta = 0\}$. Therefore, $(z',\zeta') \in \Sigma_T$. \\
It remains to prove the "full measure" part: one easily notes that 
\[\Sigma_T = \bigcap_{|r| \leq T, r \in \Q}{\Phi^{-r}(T^* X \backslash (T^*X_0\cap\{\eta = 0\}))}.\] 
Now, $T^*X_0\cap \{\eta = 0\}$ has null measure, thus its complement has full measure, and so has $\Phi^r(T^* X \backslash (T^*X_0\cap\{\eta = 0\}))$ because $\Phi^r$ is a diffeomorphism. 
\end{proof}

\subsection{Flow invariance of the eigenfunctions}

\begin{lem}
Let $T > 0$, let $a \in C_c^\infty(T^* X)$ with  $\supp(a)\subset \Sigma_T$. Then there exists some constant $C > 0$, depending only on $a$ and $T$, such that for every $j \geq 0$, and every $0 \leq t \leq T$, 
$$\left|\langle \Op_{h_j}(a)u_j,\,u_j\rangle-\langle\Op_{h_j}(a \circ \Phi^t)u_j,\,u_j\rangle\right| \leq Ch_j.$$
\end{lem}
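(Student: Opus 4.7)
Set $b_s = a\circ\Phi^s$ for $s\in[0,T]$. The idea is to show $\partial_s\langle\Op_{h_j}(b_s)u_j,u_j\rangle = \mathcal{O}(h_j)$ uniformly in $s\in[0,T]$, so that integration from $0$ to $t\leq T$ yields the stated bound with $C$ depending only on $a$ and $T$. The support hypothesis $\supp a\subset\Sigma_T$, combined with $\Phi^{-s}(\supp a)\subset\bigcup_{|\tau|\leq T}\Phi^{\tau}(\supp a)$, implies that $\supp b_s$ stays at a uniform positive distance from $T^*X_0\cap\{\eta=0\}$ for all $s\in[0,T]$. In particular, $\supp b_s$ avoids the wavefront set $\{r=c,\eta=0\}$ of the distribution $\delta_c$, uniformly in $s$.

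Differentiating and using the Hamilton equation $\partial_s b_s = \{p, b_s\}$, with $p=|\xi|^2_g$ the principal symbol of $-h_j^2\Delta$ generating $\Phi^s$, gives
\[\partial_s\langle\Op_{h_j}(b_s)u_j,u_j\rangle = \langle\Op_{h_j}(\{p,b_s\})u_j,u_j\rangle.\]
By the principal symbol calculus applied to the compact $x$-support of $b_s$, one has $\Op_{h_j}(\{p,b_s\}) = \tfrac{i}{h_j}[-h_j^2\Delta,\Op_{h_j}(b_s)] + h_j^2 R_{j,s}$ with $R_{j,s}\colon L^2\to L^2$ bounded uniformly in $j$ and in $s\in[0,T]$. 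It therefore suffices to estimate $\tfrac{i}{h_j}\langle[-h_j^2\Delta,\Op_{h_j}(b_s)]u_j,u_j\rangle$.

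This is where the argument departs from the classical Egorov scheme: $u_j$ is an eigenfunction of $\Delta_c$, not of $\Delta$. By Corollary \ref{defi_alphaq}, $-h_j^2\Delta u_j = -(1+h_j^2/4)u_j - h_j^2 Q_j\delta_c$ as a distribution. Writing $B=\Op_{h_j}(b_s)$, the function $Bu_j$ is smooth and compactly supported, because $b_s\in C_c^\infty(T^*X)$ and $\supp b_s$ is disjoint from the microlocal singularity of $u_j$ by the support condition. The distributional pairing $\langle-h_j^2\Delta(Bu_j),u_j\rangle = \langle Bu_j,-h_j^2\Delta u_j\rangle$ is then legitimate, the eigenvalue contributions cancel, and one obtains
\[\langle[-h_j^2\Delta,B]u_j,u_j\rangle = h_j^2 Q_j\bigl(\langle B\delta_c,u_j\rangle - \langle Bu_j,\delta_c\rangle\bigr).\]

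The two remaining pairings are $\mathcal{O}(h_j^\infty)$, uniformly in $s\in[0,T]$: the uniform disjointness $\supp(b_s)\cap\WF(\delta_c)=\emptyset$ gives $B\delta_c = \mathcal{O}_{C^\infty}(h_j^\infty)$ by standard microlocal calculus, and similarly $B^*\delta_c = \mathcal{O}_{C^\infty}(h_j^\infty)$ since the adjoint has the same essential symbol support, whence $\langle Bu_j,\delta_c\rangle = \langle u_j,B^*\delta_c\rangle = \mathcal{O}(h_j^\infty)$. The $L^2$-normalization of $u_j$ bounds $\int_{c_0}^c|\alpha_j|^2\sin^2((r-c)/h_j)\,dr$ by $1$, forcing $\alpha_j = \mathcal{O}(1)$ and hence $|Q_j| = \mathcal{O}(h_j^{-1})$, so $h_j^2|Q_j|\cdot\mathcal{O}(h_j^\infty)=\mathcal{O}(h_j^\infty)$. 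Dividing by $h_j$ and adding the $h_j^2 R_{j,s}$ contribution gives $\partial_s\langle\Op_{h_j}(b_s)u_j,u_j\rangle = \mathcal{O}(h_j)$; integrating over $s\in[0,t]$ completes the proof.

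The main obstacle is the failure of the usual Egorov cancellation $\langle[P,B]u_j,u_j\rangle = 0$: because $u_j$ is only a pseudo-Laplacian eigenfunction, a commutator defect proportional to $Q_j\delta_c$ remains, and the geometric condition $\supp a\subset\Sigma_T$ is precisely what makes this defect microlocally $\mathcal{O}(h_j^\infty)$ for all $s\in[0,T]$.
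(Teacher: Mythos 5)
Your proof is correct and follows essentially the same route as the paper: differentiate $t\mapsto\langle\Op_{h_j}(a\circ\Phi^t)u_j,u_j\rangle$, convert the Poisson bracket into a commutator with the (semiclassical) Laplacian, use $\Delta u_j=(\lambda_j^2+\tfrac14)u_j+Q_j\delta_c$, and eliminate the $\delta_c$ contributions via the uniform disjointness of $\supp(a\circ\Phi^t)$ from $\WF(\delta_c)$ provided by $\supp a\subset\Sigma_T$. The only cosmetic differences are that you exploit the exact cancellation of the eigenvalue terms in the commutator (the paper bounds the two halves $U$ and $V$ separately, each absorbing a harmless $\tfrac{h_j}{4}\langle\Op_{h_j}(a)u_j,u_j\rangle$ term) and that you derive $|Q_j|=\mathcal{O}(h_j^{-1})$ directly from $\|u_j\|_{L^2}=1$ rather than using the paper's boundedness of $h_j^2Q_j$; both variants are harmless.
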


\begin{proof}
Let $P:=(h_j^2\Delta -1)$, then we have 
 $Pu_j = h_j^2Q_j\delta_c + \frac{h_j^2}{4}u_j$, and $\delta_c$ is $H^{-1}$ (see the beginning of section $2.1$ for the definition) and $(h_j^2Q_j)_j$ is in $\ell^2(\N)$ (Proposition \ref{ef_are_ell2}) hence bounded. 
Let $s_j(t) := \langle \Op_{h_j}(a\circ \Phi^t)u_j,\,u_j\rangle$; every $s_j$ is smooth, and 
\begin{align*}
\partial_t s_j(t) &= \langle \Op_{h_j}(\{p,a \circ \Phi^t\})u_j,u_j\rangle = -ih_j^{-1}\langle [P,\Op_{h_j}(a\circ\Phi^t)]u_j,\,u_j\rangle +ih_j\langle R(h_j,t)u_j,\,u_j\rangle\\
&= \frac{-i}{h_j}\langle P\Op_{h_j}(a \circ \Phi^t)u_j,\,u_j\rangle - \frac{-i}{h_j}\langle \Op_{h_j}(a \circ \Phi^t)Pu_j,\,u_j\rangle + h_j\langle R(h_j,t)u_j,\,u_j\rangle\\
&= U-V+W.
\end{align*}
In this computation, for every $0 \leq t \leq T$, $R(h,t) \in \Psi^{-\infty}_h(X)$ is a pseudodifferential operator satisfying 
\[\sup\{\|R(h,t)\|_{L^2 \rightarrow L^2};\,0 < h < h_{0},0 \leq t \leq T\} < \infty.\] 
Therefore $|W| \leq C_1h_j$. Next, we get
 \[|V| = \left|\frac{1}{h_j}\langle \Op_{h_j}(a\circ\Phi^t)Pu_j,\,u_j\rangle\right| \leq |Q_j|h_j\left|\langle \Op_{h_j}(a\circ\Phi^t)\delta_c,u_j\rangle\right| + \frac{h_j}{4}\left|\langle\Op_{h_j}(a)u_j,\,u_j\rangle\right|.\] 
 Let us notice that ${\rm WF}_h(\delta_c) \subset \{r=c, \eta = 0\}$, while ${\rm WF}_h(\Op_{h}(a\circ\Phi^t)) \subset \Phi^{-t}(\Sigma_T)$ is at positive distance from $\{\eta = 0\}$ (uniformly in $|t| \leq T$). 
 Therefore, $\|\Op_{h_j}(a \circ \Phi^t)\delta_c\|_{L^2} \leq C'_j(t)h_j^2$, where $(h_j^2Q_jC'_j(t))_{j \geq 1,0 \leq t \leq T}$ is bounded by some $B > 0$. Thus for some constant $B$ depending only on $a$ and $T$, $|V| \leq Bh_j$. 
Finally, 
\[|U| = \left|\frac{1}{h_j}\langle P\Op_{h_j}(a\circ\Phi^t)u_j,\,u_j\rangle\right| = h_j^{-1}\left|\langle \Op_{h_j}(a\circ\Phi^t)u_j,\,Pu_j\rangle\right|,\] 
so let us write $\Op_h(a\circ\Phi^t)^* = \Op_h((\overline{a+hc_{h,t}})\circ \Phi^t)$, where $\bigcup\limits_{h}\supp\,c_{h,t} \subset \supp\,a$. Thus $$ih_jU = \langle\Op_{h_j}(\overline{a+hc_{h,t}}\circ\Phi^t)\delta_c,\,u_j\rangle + \frac{h_j^2}{4}\langle\Op_{h_j}(a)u_j,\,u_j\rangle.$$ The same argument as for the bound on $V$ can be re-used to get $|ih_jU| \leq B'h_j^2$, ie $|U| \leq B'h_j$.  
So we have $|\partial_t s_j(t)| \leq (C_1+B+B')h_j = Ah_j$ uniformly in $0 \leq t \leq T$, where clearly $C_1,B,B'$ depend only on $a$ and $T$. Now, this yields $|s_j(t)-s_j(0)| \leq ATh_j$ when integrating.
\end{proof}

A direct consequence is the following:
\begin{cor}
Let $T > 0$, let $a \in C^\infty_c(T^*X)$ with $\supp(a)\subset \Sigma_T$. There exists some constant $C > 0$ such that for every $j \geq 0$, \[\left|\langle \Op_h(a-\langle a \rangle_T)u_j,\,u_j\right| \leq C h_j\]
with $\langle a \rangle_T:=\frac{1}{T}\int_{0}^T a\circ \Phi^t dt$.
\end{cor}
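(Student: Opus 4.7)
The plan is to derive the corollary as an immediate averaging consequence of the preceding lemma. The key observation is the identity
$$a - \langle a\rangle_T \;=\; \frac{1}{T}\int_0^T \bigl(a - a\circ\Phi^t\bigr)\,dt,$$
which is valid pointwise in $(x,\xi)$ and realizes $a - \langle a\rangle_T$ as a smooth, compactly supported function of $(x,\xi)$ (with phase-space support contained in the compact set $\bigcup_{0\le t\le T}\Phi^{-t}(\supp a)$).

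Next, I would use the linearity and continuity of the quantization $\Op_{h_j}$ constructed in Section 1.3, together with the linearity of the matrix element $\langle \cdot\, u_j,u_j\rangle$, to interchange integration in $t$ with quantization:
$$\langle \Op_{h_j}(a - \langle a\rangle_T)u_j, u_j\rangle \;=\; \frac{1}{T}\int_0^T \Bigl(\langle \Op_{h_j}(a)u_j, u_j\rangle - \langle \Op_{h_j}(a\circ\Phi^t)u_j, u_j\rangle\Bigr)dt.$$
This interchange is routine because $\Op_{h_j}$ is built from Weyl quantizations in local charts composed with multiplications by the fixed cut-offs $\chi_i$, all of which are continuous on the Fréchet space of smooth symbols with uniformly controlled space-support, and $t\mapsto a\circ\Phi^t$ is smooth into that space on $[0,T]$.

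Finally, since $\supp(a)\subset \Sigma_T$, I can apply the preceding lemma pointwise in $t\in[0,T]$ to bound the integrand by $C h_j$, with $C$ depending only on $a$ and $T$. Integrating this uniform bound over $[0,T]$ and dividing by $T$ yields
$$\bigl|\langle \Op_{h_j}(a - \langle a\rangle_T)u_j, u_j\rangle\bigr| \;\le\; C h_j,$$
which is the claim. There is no substantive obstacle: the whole difficulty (the Egorov-type commutator estimate together with the control of the singular-circle contribution via the bound on $Q_j$ and the wavefront condition on $\delta_c$) has already been absorbed into the preceding lemma, and the corollary is a one-line averaging argument on top of it.
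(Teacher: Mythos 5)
Your proof is correct and matches the paper's intent exactly: the paper simply labels this corollary ``a direct consequence'' of the preceding lemma, and the averaging identity $a - \langle a\rangle_T = \frac{1}{T}\int_0^T (a - a\circ\Phi^t)\,dt$ together with the uniform-in-$t$ bound from that lemma is precisely the one-line argument intended.
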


An easy argument then yields (taking into account the fact that $a-\langle a \rangle_T$ has average zero): 

\begin{prop}
\label{egorov-goal}
Let $T > 0$, let $a \in C^\infty_c(T^*X)$ with $\supp(a)\subset \Sigma_T$. Then, as $\lambda \longrightarrow \infty$, $M(a-\langle a \rangle_T,\lambda) \longrightarrow 0$. 
\end{prop}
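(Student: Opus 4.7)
The plan is to combine the pointwise estimate from the preceding corollary with the Weyl law of Proposition \ref{Weyl}. Set $b := a - \langle a \rangle_T$. The first step is to observe that $\int_{S^*X} b\, d\mu = 0$: indeed, the Liouville measure on $S^*X$ is invariant under the geodesic flow, so $\int_{S^*X} a\circ \Phi^t\, d\mu = \int_{S^*X} a\, d\mu$ for every $t \in [0,T]$, and averaging over $t$ yields $\int_{S^*X} \langle a \rangle_T = \int_{S^*X} a$. Consequently, in the definition of $M(b,\lambda)$ the subtracted integral vanishes, and
\[ M(b,\lambda)^2 = \frac{1}{N(\lambda)}\sum_{\lambda_j \leq \lambda} \left|\langle \Op_{h_j}(b)u_j, u_j\rangle\right|^2. \]

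Next, the preceding corollary provides the uniform pointwise bound $|\langle \Op_{h_j}(b) u_j, u_j\rangle| \leq C h_j = C/\lambda_j$ for every $j\geq 0$, with $C$ depending only on $a$ and $T$. Plugging this in:
\[ M(b,\lambda)^2 \leq \frac{C^2}{N(\lambda)}\sum_{\lambda_j \leq \lambda} \frac{1}{\lambda_j^2}. \]

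The last step is to estimate the remaining sum by summation by parts against the Weyl counting function. Writing the sum as a Stieltjes integral and using $N(t) \sim C't^2$:
\[ \sum_{1 \leq \lambda_j \leq \lambda} \frac{1}{\lambda_j^2} = \frac{N(\lambda)}{\lambda^2} - N(1) + 2\int_1^\lambda \frac{N(t)}{t^3}\, dt = O(\log \lambda), \]
the (finitely many) indices with $\lambda_j < 1$ contributing a fixed constant. Combining with Proposition \ref{Weyl} gives $M(b,\lambda)^2 = O(\lambda^{-2}\log\lambda) \to 0$, as required.

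I do not anticipate any real obstacle: all of the analytic content (the commutator computation, the wavefront argument confining $\delta_c$ away from $\supp(a\circ \Phi^t)$, and the $\ell^2$ estimate on the $Q_j$) has already been carried out in the preceding lemma and corollary. The only conceptual point here is checking that $\int_{S^*X} b = 0$, so that the uniform bound of size $h_j$ can be applied directly; after that, the conclusion is a mechanical application of the Weyl asymptotics.
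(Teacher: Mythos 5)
Your proposal is correct, and it supplies exactly the ``easy argument'' the paper leaves unwritten: flow-invariance of the normalized Liouville measure makes $\int_{S^*X}(a-\langle a\rangle_T)\,d\mu=0$, so $M(a-\langle a\rangle_T,\lambda)^2$ reduces to an average of $|\langle\Op_{h_j}(a-\langle a\rangle_T)u_j,u_j\rangle|^2\le C^2h_j^2$, which tends to zero. Your summation-by-parts step, while correct and giving the quantitative rate $O(\lambda^{-2}\log\lambda)$, is slightly more than is needed: since $h_j\to 0$ and $N(\lambda)\to\infty$ (both already guaranteed by Proposition \ref{Weyl}), the conclusion already follows from the Ces\`aro-mean observation that averages of a null sequence tend to zero.
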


\section{Analysis far in the cusp}

If we joined the main results of sections $3$ and $4$, we would be able to prove the main theorem for symbols with average zero. This will be done in section $6$.  

But if we want to prove the main theorem for general symbols in $S_{\rm comp}^0$, 
we have to find some symbols with non-zero average for which the result holds. A direct proof turns out to be difficult: so we will exhibit symbols $s$ with average arbitrarily close to some non-zero constant and such that $\limsup_{\lambda \rightarrow \infty}\,M(s,\lambda)$ is arbitrarily close to $0$.
\\

Before, we need to introduce some cutoff functions:
let us set some $R > e^{c_0}$, and $\chi_R$ be a smooth nondecreasing function such that 
\begin{equation}\label{chiR}
\chi_R(r)=1 \textrm{ on }[R+1,\infty), \quad \chi_R(r)=0  \,\,  \textrm{ on } (-\infty,R].
\end{equation}
Let $\phi_R:X \rightarrow [0;1]$ be a smooth function that is zero outside the cusp and such that if $r > c_0,\theta \in \R/\Z$, $\phi_R (r,\theta) = \chi_R(r)$. Note that $1-\phi_R^4\in C_c^\infty(X)$.

We will show the following: 
\begin{prop}
\label{cuspanalysis-goal}
There exists a universal constant $C > 0$ such that for any $R > e^c$,
$$\limsup_{\lambda \rightarrow \infty}\,M(1-\phi_R^{8},\lambda) \leq Ce^{-R/4}.$$
\end{prop}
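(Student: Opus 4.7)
The plan is to reduce the estimate to a spectral average of $\int_X \phi_R^8 |u_j|^2\,dv_g$, which I handle via a heat-trace argument.

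Since $1-\phi_R^8$ depends only on the base point $x$, the quantization procedure forces $\Op_{h_j}(1-\phi_R^8)$ to coincide with multiplication by $1-\phi_R^8$ (the Weyl quantization of an $x$-only symbol is multiplication in each chart, and the partition of unity $\sum \chi_i^2=1$ reassembles this globally). Setting $c_R := \int_{S^*X}\phi_R^8\,d\mu$, which equals $\mathrm{vol}(X)^{-1}\int_X\phi_R^8\,dv_g$ and is bounded by $Ce^{-R}$ by direct computation in cusp coordinates, the summand inside $M(1-\phi_R^8,\lambda)^2$ becomes $\left(c_R - \int_X \phi_R^8|u_j|^2\,dv_g\right)^2$. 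Applying $(a-b)^2\leq 2a^2+2b^2$ and using $0\leq \int_X\phi_R^8|u_j|^2\,dv_g\leq 1$ (so its square is dominated by the quantity itself), I obtain
$$M(1-\phi_R^8,\lambda)^2\leq 2c_R^2+\frac{2}{N(\lambda)}\sum_{\lambda_j\leq\lambda}\int_X\phi_R^8|u_j|^2\,dv_g.$$

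The remaining sum I would estimate by a heat-trace bound. Since $e^{-(\lambda_j^2+1/4)/\lambda^2}\geq e^{-2}$ whenever $\lambda_j\leq\lambda$, up to finitely many anomalous eigenvalues,
$$\sum_{\lambda_j\leq\lambda}\int_X\phi_R^8|u_j|^2\,dv_g \leq e^2\,\Tr\left(\phi_R^8\,e^{-\Delta_c/\lambda^2}\right) = e^2\int_X\phi_R^8(x)\,K_{1/\lambda^2}^c(x,x)\,dv_g(x),$$
where $K_t^c$ denotes the heat kernel of $\Delta_c$. Invoking a uniform pointwise bound of the form $K_t^c(x,x)\leq C(1+1/t)$ for $x\in X$ and $t\in(0,1]$, together with $\int_X\phi_R^8\,dv_g\leq e^{-R}$, yields a bound of order $\lambda^2 e^{-R}$ on this trace. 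Dividing by $N(\lambda)\sim c\lambda^2$ (Proposition \ref{Weyl}) and combining with $c_R^2\leq C^2e^{-2R}$ gives $\limsup_\lambda M(1-\phi_R^8,\lambda)^2\leq C''e^{-R}$, which is even stronger than the advertised $Ce^{-R/4}$.

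The hard part is establishing the uniform heat-kernel bound $K_t^c(x,x)\leq C(1+1/t)$, particularly deep in the cusp, since the boundary condition at $r=c$ is not the usual one. A natural approach is the $\theta$-Fourier decomposition in the cusp: restricted to nonzero modes, $\Delta_c$ acts as the free hyperbolic Laplacian, whose cylindrical-cusp heat kernel is controlled by $C(1+1/t)$ via the explicit Bessel spectral resolution or by domination by the $\mathbb{H}^2$ heat kernel; the zero-mode contribution is concentrated in $r<c$ and can be handled separately using the explicit description of zero modes from Corollary \ref{defi_alphaq}. An alternative route, avoiding the heat kernel entirely, is to work mode by mode: each nonzero $\theta$-Fourier coefficient of $u_j$ is proportional to $K_{i\lambda_j}(2\pi|n|e^r)$ and enjoys Agmon-type exponential decay in the classically forbidden region $r>\log(\lambda_j/(2\pi|n|))$, while the ``allowed'' modes $|n|<\lambda_j e^{-R}/(2\pi)$ (which are only finitely many at fixed energy) must be summed carefully using their $L^2$ normalization.
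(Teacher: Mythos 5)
Your reduction is fine and coincides with the paper's first step: since $1-\phi_R^8$ depends only on $x$, $\Op_{h_j}(1-\phi_R^8)$ is multiplication, the term $\int_{S^*X}\phi_R^8\,d\mu=\mathrm{Vol}(X)^{-1}\int_X\phi_R^8\,d{\rm v}_g$ is $O(e^{-R})$, and everything reduces to bounding the spectral average of $\int_X\phi_R^8|u_j|^2\,d{\rm v}_g$. But the paper obtains that bound from the whole of Section 5 (the mode decomposition $v_{j,k}$, the cutoffs $\chi_{h,k}$, the microlocalization estimates of Corollary \ref{phase_loc} and Proposition \ref{full_loc}, and the Hilbert--Schmidt count of the operators $A'_{h,k}$, packaged as Corollary \ref{boundY} and a dyadic sum over $h$), whereas you replace all of it by a single unproven input: the uniform diagonal bound $K^c_t(x,x)\leq C(1+1/t)$ for the heat kernel of the pseudo-Laplacian deep in the cusp. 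That is exactly where the content of the proposition lies, so as written there is a genuine gap, and neither of your sketched routes closes it.

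Concretely, the ``domination by the $\mathbb{H}^2$ heat kernel'' idea fails: the diagonal heat kernel of the model hyperbolic cusp (the periodization of the $\mathbb{H}^2$ kernel) at a point of height $r$ is of size $e^{r}t^{-1/2}$ once $e^{-r}\ll\sqrt{t}$, and this growth is carried precisely by the zero Fourier mode. The bound $C(1+1/t)$ you need concerns the kernel with the zero mode removed, and a positivity or comparison argument cannot see that removal; the $O(1/t)$ bound must come from a genuine mode-by-mode analysis (each mode $n\neq 0$ carries the potential $(2\pi n)^2e^{2r}$), i.e.\ from the same kind of counting of ``allowed'' modes $|n|\lesssim \lambda e^{-R}$ that the paper performs via the Hilbert--Schmidt norms. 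Your alternative route (Bessel functions $K_{i\lambda_j}(2\pi|n|e^r)$ plus Agmon decay, then a careful summation over the allowed modes using the $L^2$ normalization) is essentially a re-derivation of Section 5, and the summation you defer is precisely where the paper's work is done. Moreover, even granting the model computation, one still needs a localization step relating the heat kernel of the global operator $\Delta_c$ (with its nonstandard zero-mode condition on the circle $r=c$ and variable curvature outside the cusp) to the model cusp operator at points $r\geq R$, which you do not address. None of this looks fatal --- your heat-trace strategy, if completed, would plausibly even yield the stronger rate $Ce^{-R/2}$ for $M(1-\phi_R^8,\lambda)$ --- but the central estimate is assumed rather than proved.
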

Our first step is to understand where the mass of the $u_j$ is localized.

Let us write, for every $j \geq 0, r > c_0, \theta \in \R/\Z$, $$u_j(r,\theta) = e^{r/2}\sum_{k\in \Z}{v_{j,k}(r)e^{2ik\pi\theta}}.$$ This is similar to the expansion of $u_j$ as a Fourier series in $\theta$, but the coefficients were renormalized, so that $\int_{\theta \in \R/\Z}{|u_j|^2\,d{\rm v}_g} = \sum_{k \in \Z}{|v_{j,k}|^2(r)}\,dr$.  

Let $\chi_1$ be a smooth nondecreasing function such that $\chi_1=0$ on $(-\infty,0]$, and $\chi_1=1$ on $[1,\infty)$. 
We will now denote $\chi_{h,k}(r) = \chi_1(-\ln{2|k|h\pi}-r+R/2)$. This function is going to be used to weaken the growth of the function $(2\pi kh)^2e^{2r}$, that is not a symbol: indeed, $\chi_{h,k}(r) = 0$ as soon as $(2\pi hk)^2e^{2r} \geq e^R$. 

Let $\chi_2$ be a smooth non-negative compactly supported function such that $\chi_2=1$ on $[-3,3]$, and $\chi_2=0$ outside $(-4,4)$, and $|\chi_2| \leq 1$.

We first have the following straightforward formula: for $u\in H^1(X)$, we have in the cusp, 
\[\,|\nabla^g u|^2_g(r,\theta) = |(\partial_r u)(r,\theta)|^2+ e^{2r}|(\partial_{\theta}u)(r,\theta)|^2.\]

\begin{cor}
\label{L2_v_bounds}
For $j \geq 0$, the following bounds hold true: 
\begin{align}
\label{exp_rmd_est}
\sum_{k \in \Z}{(2\pi k)^2\int_{c_0}^{\infty}{e^{2r}|v_{j,k}|^2(r)\,dr}} &\leq \lambda_j^2 + \frac{1}{4},\\
\label{norm_est}
\sum_{k \in \Z}{\int_{c_0}^{\infty}{|v_{j,k}|^2(r)\,dr}} &\leq 1,\\
\label{dvt_norm_est}
\sum_{k \in \Z}{\int_{c_0}^{\infty}{|v'_{j,k}|^2(r)\,dr}} &\leq 2\lambda_j^2 + 1
\end{align}
\end{cor}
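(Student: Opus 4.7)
The three bounds are just the $L^2$ energy identity $\|u_j\|_{L^2}^2=1$ and the form identity $\langle \Delta_c u_j,u_j\rangle=\lambda_j^2+\frac14$ rewritten in the Fourier basis, so the plan is to unpack these two identities carefully and discard the (nonnegative) contribution of $X_1$.

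First I would restrict everything to the cusp $X_0=(c_0,\infty)_r\times(\R/\Z)_\theta$, where the Riemannian measure is $e^{-r}\,drd\theta$. Writing $u_j(r,\theta)=e^{r/2}\sum_k v_{j,k}(r)e^{2ik\pi\theta}$, Parseval in $\theta$ gives
\[
\int_{\R/\Z}|u_j|^2\,d\theta=e^{r}\sum_{k}|v_{j,k}(r)|^2,\qquad \int_{\R/\Z}|\partial_\theta u_j|^2\,d\theta=e^{r}\sum_{k}(2\pi k)^2|v_{j,k}(r)|^2,
\]
and the factor $e^r$ exactly cancels with $e^{-r}$ from $dv_g$. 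Multiplying out and integrating in $r$ from $c_0$ to $\infty$, the identity $1=\|u_j\|_{L^2(X)}^2\geq \|u_j\|_{L^2(X_0)}^2$ gives \eqref{norm_est}, while the bound $\int_{X_0}e^{2r}|\partial_\theta u_j|_g^2\,dv_g\leq q(u_j)=\lambda_j^2+\frac14$ combined with the formula for $|\nabla^g u|_g^2$ stated just before the corollary yields \eqref{exp_rmd_est} directly.

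For \eqref{dvt_norm_est} I would compute $\partial_r u_j=e^{r/2}\sum_k\bigl(\tfrac12 v_{j,k}+v'_{j,k}\bigr)e^{2ik\pi\theta}$, so that the same Parseval computation gives
\[
\int_{X_0}|\partial_r u_j|^2\,dv_g=\int_{c_0}^\infty\sum_k\bigl|\tfrac12 v_{j,k}(r)+v'_{j,k}(r)\bigr|^2\,dr\leq \lambda_j^2+\tfrac14 .
\]
Applying the elementary inequality $|v'_{j,k}|^2\leq 2\bigl|\tfrac12 v_{j,k}+v'_{j,k}\bigr|^2+\tfrac12|v_{j,k}|^2$, summing over $k$, integrating, and combining with \eqref{norm_est} gives $\sum_k\int|v'_{j,k}|^2\,dr\leq 2(\lambda_j^2+\tfrac14)+\tfrac12=2\lambda_j^2+1$, which is \eqref{dvt_norm_est}.

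There is no real obstacle here: the only bookkeeping I would double-check is the cancellation of the $e^{r/2}$ renormalization against the hyperbolic measure $e^{-r}drd\theta$, and the fact that discarding $\int_{X_1}|\nabla^g u_j|_g^2\,dv_g\geq 0$ preserves the inequalities (so the identity on $X$ becomes an inequality on $X_0$). The mild subtlety is that the cross term $\int (|v_{j,k}|^2)'\,dr$ that would appear if one tried to expand $|\tfrac12 v_{j,k}+v'_{j,k}|^2$ is simply avoided by using the crude $(a+b)^2\leq 2a^2+2b^2$ estimate, which already gives the stated constants.
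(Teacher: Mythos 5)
Your proof is correct and follows the same route as the paper: Parseval in $\theta$ with the $e^{-r}$ measure cancelling the $e^{r}$ renormalization for \eqref{norm_est} and \eqref{exp_rmd_est}, then the identity $\partial_r u_j = e^{r/2}\sum_k(\tfrac12 v_{j,k}+v_{j,k}')e^{2ik\pi\theta}$ combined with the elementary $(a+b)^2$ bound for \eqref{dvt_norm_est}. The only cosmetic difference is that the paper writes that last inequality as $|\tfrac12 v+v'|^2\geq\tfrac12|v'|^2-|\tfrac12 v|^2$, which is just your $|v'|^2\leq 2|\tfrac12 v+v'|^2+\tfrac12|v|^2$ rearranged.
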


\begin{proof}
For the bound \eqref{exp_rmd_est}, we estimate
\begin{align*}
\lambda_j^2 + \frac{1}{4}&= \langle u_j,\,\Delta_c u_j\rangle = \|\nabla u_j\|^2_{L^2} \geq \int_{r > c_0}|\nabla^g u_j|^2_g\,d{\rm v}_g\\
&\geq \int_{r > c_0}{\int_{\R/\Z}{e^{2r}|\partial_{\theta}u_j(r,\theta)|^2d\theta}e^{-r}dr}\\
&\geq \sum_{k \in \Z}{(2k\pi)^2\int_{r > c_0}{e^{2r}|v_{j,k}|^2(r)\,dr}}.
\end{align*}
The bound \eqref{norm_est} is a direct consequence of the fact that $\|u_j\|^2_{L^2} = 1$. 
As for \eqref{dvt_norm_est}, note that in the cusp, $(\partial_r u_j)(r,\theta) = e^{r/2}\sum_{k \in \Z}{\left(\frac{1}{2}v_{j,k}+v'_{j,k}\right)e^{2ik\pi\theta}}$, so
\[\begin{split}
\lambda_j^2 + \frac{1}{4} & \geq \sum_{k \in \Z}{\int_{c_0}^{\infty}{\int_{\R/\Z}{e^{-r}|\partial_r u_j(r,\theta)|^2\,d\theta}dr}}  \geq \sum_{k \in \Z}{\int_{c_0}^{\infty}{\left|\frac{v_{j,k}}{2}+v'_{j,k}\right|^2\,dr}} \\
&\geq \sum_{k \in \Z}{\int_{c_0}^{\infty}{\left(\frac{1}{2}|v'_{j,k}|^2-\left|\frac{v_{j,k}}{2}\right|^2\right)\,dr}} \geq \frac{1}{2}\sum_{k \in \Z}{\int_{c_0}^{\infty}{|v'_{j,k}|^2(r)\,dr}} - \frac{1}{4}
\end{split}\]
and the proof is complete.\end{proof}

Let now $h > 0$ be very small, and $\lambda = h^{-1}$. Let $\omega_j^2 = (h\lambda_j)^2$, for every $j$ such that $h/2 \leq h_j \leq 2h$ (we say that $j$ is \emph{in the range}): then, if $h$ is small enough, $\omega_j$ is between $1/2$ and $2$. 

\begin{lem}
In the cusp region $r > c_0$, 
\[(-h^2\partial_r^2 - \omega_j^2 + (2kh\pi)^2e^{2r})v_{j,k} = 0.\] 
We can therefore extend $v_{j,k}$ as a smooth function of the whole real line with the same properties.  
\end{lem}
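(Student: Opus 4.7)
The plan is a direct computation combined with the explicit form of the zero mode from Corollary \ref{defi_alphaq}.

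First I would recall that in the cusp coordinates, the Laplacian reads $\Delta = -\partial_r^2 + \partial_r - e^{2r}\partial_\theta^2$, and that by Corollary \ref{defi_alphaq} we have $(\Delta - \lambda_j^2 - \tfrac{1}{4}) u_j = Q_j \delta_c$. Since $\delta_c$ is $\theta$-independent, projecting on the $k$-th Fourier mode for $k \neq 0$ kills the right-hand side, so $(u_j)_k(r) = e^{r/2} v_{j,k}(r)$ satisfies $\Delta (u_j)_k = (\lambda_j^2 + \tfrac{1}{4})(u_j)_k$ on all of $(c_0,\infty)$. For $k=0$ the same Corollary gives the explicit formula $(u_j)_0(r) = \alpha_j e^{r/2}\sin\!\bigl(\tfrac{r-c}{h_j}\bigr)$ on $(c_0, c]$, i.e.\ $v_{j,0}(r) = \alpha_j \sin\!\bigl(\tfrac{r-c}{h_j}\bigr)$ there.

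Next I would substitute $(u_j)_k = e^{r/2} v_{j,k}$ into the modal equation. A short Leibniz computation gives
\[
(-\partial_r^2 + \partial_r)\bigl(e^{r/2} v_{j,k}\bigr) = e^{r/2}\Bigl(\tfrac{1}{4} v_{j,k} - v''_{j,k}\Bigr),
\]
so the $\partial_r$ terms conspire with the $\tfrac{1}{4}$ shift to cancel, leaving
\[
-v''_{j,k} + (2k\pi)^2 e^{2r} v_{j,k} = \lambda_j^2 v_{j,k}.
\]
Multiplying by $h^2$ and recalling $\omega_j = h\lambda_j$ produces the stated ODE.

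Finally, for the smooth extension: when $k \neq 0$ the ODE is a linear second-order equation with smooth coefficients on all of $\R$, so the local solution on $(c_0,\infty)$ extends uniquely to a smooth solution on all of $\R$. When $k = 0$ the formula $v_{j,0}(r) = \alpha_j \sin\!\bigl(\tfrac{r-c}{h_j}\bigr)$ is already a global smooth solution of $(-h^2 \partial_r^2 - \omega_j^2) v_{j,0} = 0$, the $k=0$ case of the ODE; this is the desired extension. No real obstacle appears; the only subtle point worth flagging is that for $k=0$ this ODE-extension does \emph{not} coincide with the true Fourier coefficient of $u_j$ in $\{r > c\}$ (where the latter vanishes by the pseudo-Laplacian boundary condition), but the lemma only asks for a smooth extension of $v_{j,k}$ that solves the ODE, not one that represents $u_j$.
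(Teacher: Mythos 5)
Your proof is correct, and since the paper states this lemma without proof, your derivation supplies the natural argument: project the identity $(\Delta-\lambda_j^2-\tfrac14)u_j=Q_j\delta_c$ from Corollary \ref{defi_alphaq} onto the $k$-th Fourier mode, note that $\delta_c$ is $\theta$-independent so the right side drops out for $k\neq0$, and carry out the conjugation by $e^{r/2}$ to cancel the first-order term of $\Delta=-\partial_r^2+\partial_r-e^{2r}\partial_\theta^2$. The resulting ODE $-v_{j,k}''+(2k\pi)^2 e^{2r}v_{j,k}=\lambda_j^2 v_{j,k}$ is exactly the semiclassical form stated after multiplying by $h^2$, and the smooth extension for $k\neq0$ follows from linear ODE theory with smooth coefficients.

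One small caveat about your $k=0$ discussion: you correctly notice that $v_{j,0}$ vanishes on $\{r\geq c\}$ while the ODE solution $\alpha_j\sin((r-c)/h_j)$ does not, but the consequence is slightly stronger than you suggest. Since $v_{j,0}$ has a corner at $r=c$, it does not satisfy the ODE \emph{on the cusp region} $(c_0,\infty)$ (only on $(c_0,c)\cup(c,\infty)$), and there is no genuine smooth extension of $v_{j,0}$ solving the ODE. So the lemma as written is really a statement about $k\neq0$, which is consistent with how it is used: all downstream estimates (Corollary \ref{phase_loc}, Proposition \ref{full_loc}) sum only over $1\leq|k|\leq3\lambda$. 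Your defense that "the lemma only asks for a smooth extension that solves the ODE, not one that represents $u_j$" is not quite right, since an extension by definition must agree with $v_{j,0}$ on the original domain; but since the $k=0$ case is never invoked, this does not affect the argument.
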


This means that microlocally, the mass of $v_{j,k}$ is concentrated near the curve $\xi^2+(2kh\pi)^2e^{2r} = \omega_j^2$:

\begin{lem}
There exists a constant $C_R$ depending only on $R$ (and not on $h$) such that for every $j$ in the range, for every $1 \leq |k| \leq 3\lambda$, $$\|\Op_h((1-\chi_2)(\xi)\chi_R^2(r)\chi_{h,k}(r)^2)(\chi_Rv_{j,k})\|_{L^2}^2 \leq C_Rh^2\|v_{j,k}\|^2_{L^2(c_0,\infty)}+C_Rh^4\|v'_{j,k}\|_{L^2(c_0,\infty)}^2.$$ 
\end{lem}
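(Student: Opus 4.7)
The plan is to use an elliptic parametrix in the semiclassical $1$D calculus for the operator $P_{j,k} := -h^2\partial_r^2 - \omega_j^2 + (2\pi kh)^2e^{2r}$, with full symbol $p_{k,h}(r,\xi) := \xi^2 - \omega_j^2 + (2\pi kh)^2 e^{2r}$, combined with the fact $P_{j,k}v_{j,k}=0$ coming from the previous lemma. The key ellipticity observation is that on the support of $a(r,\xi):=(1-\chi_2)(\xi)\chi_R^2(r)\chi_{h,k}^2(r)$ one has $|\xi|\geq 3$, $\omega_j^2\leq 4$, and (by the very definition of $\chi_{h,k}$) $(2\pi kh)^2e^{2r}\leq e^R$, so
\[ p_{k,h}(r,\xi) \geq \xi^2 - \omega_j^2 \geq 5 \quad\text{on}\ \supp(a). \]

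I would then set $e(r,\xi):=a(r,\xi)/p_{k,h}(r,\xi)$, smoothly extended by $0$. This symbol is compactly supported in $r$ (with bounds depending only on $R$ through $\chi_{h,k}$), decays like $|\xi|^{-2}$, and the lower bound on $p_{k,h}$ gives uniform control of all its $S^{-2}$ seminorms. Using the algebraic identity $\{e,p_{k,h}\}=\{a,p_{k,h}\}/p_{k,h}$, bounded on $\supp(a)$ by a constant $C_R$ (the growth $|\xi|$ in $\partial_r a\cdot \partial_\xi p_{k,h}$ being compensated by the decay $|\xi|^{-2}$ of $1/p_{k,h}$), the Weyl composition---essentially finite since $p_{k,h}$ is a degree-$2$ polynomial in $\xi$ with smooth $r$-coefficients---yields
\[ \Op_h(e)\,P_{j,k} = \Op_h(a) + h\,\mathcal R_{j,k}, \qquad \|\mathcal R_{j,k}\|_{L^2\to L^2} \leq C_R, \]
together with $\|\Op_h(e)\|_{L^2\to L^2}\leq C_R$ by Calder\'on--Vaillancourt.

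To exploit $P_{j,k}v_{j,k}=0$ on $\{r>c_0\}$, note that $\chi_R$ is supported in $\{r>R>c_0\}$, so
\[ P_{j,k}(\chi_R v_{j,k}) = [P_{j,k},\chi_R]v_{j,k} = -h^2\bigl(\chi_R''\,v_{j,k} + 2\chi_R'\,v'_{j,k}\bigr); \]
since $\chi_R',\chi_R''$ have $L^\infty$-norm and $r$-support depending only on $R$, this gives $\|P_{j,k}(\chi_R v_{j,k})\|_{L^2} \leq C_Rh^2(\|v_{j,k}\|_{L^2(c_0,\infty)}+\|v'_{j,k}\|_{L^2(c_0,\infty)})$. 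Combining with the parametrix identity,
\[ \|\Op_h(a)(\chi_R v_{j,k})\|_{L^2} \leq C_R h^2(\|v_{j,k}\|+\|v'_{j,k}\|) + C_R h\|v_{j,k}\|, \]
and squaring (the $h^4\|v_{j,k}\|^2$ contribution being absorbed into $h^2\|v_{j,k}\|^2$) gives the announced bound.

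The main obstacle is the bookkeeping needed to show that all the parametrix estimates are uniform in $k$ (and in $j$ within the range). The lower bound $p_{k,h}\geq 5$ on $\supp a$, the uniform bound $(2\pi kh)^2e^{2r}\leq e^R$ on $\supp\chi_{h,k}$, and the algebraic simplification $\{e,p_{k,h}\}=\{a,p_{k,h}\}/p_{k,h}$ make this manageable, but one must verify seminorm by seminorm that all derivatives of $e$, of $\{e,p_{k,h}\}$ and of the higher-order Weyl correction terms are controlled by a constant depending only on $R$ and on fixed seminorms of $\chi_1,\chi_2,\chi_R$.
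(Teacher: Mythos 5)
Your overall strategy---divide by the elliptic symbol on the support of $(1-\chi_2)(\xi)\chi_R^2\chi_{h,k}^2$ and exploit the ODE through the exact commutator identity $P_{j,k}(\chi_Rv_{j,k})=[P_{j,k},\chi_R]v_{j,k}=-h^2(\chi_R''v_{j,k}+2\chi_R'v_{j,k}')$---is the same as the paper's, and the ellipticity bound $p_{k,h}\geq 5$ on $\supp a$, the uniform control of $e=a/p_{k,h}$, and the final squaring are fine. The gap is the parametrix identity $\Op_h(e)P_{j,k}=\Op_h(a)+h\mathcal R_{j,k}$ with $\|\mathcal R_{j,k}\|_{L^2\to L^2}\leq C_R$. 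The full symbol $p_{k,h}$ contains $(2\pi kh)^2e^{2r}$, which grows exponentially in $r$ and lies in no class $S(m)$ for an order function $m$, so the Weyl composition theorem you invoke does not apply; the real obstruction is not uniformity in $k$ but the behaviour of the potential \emph{outside} $\supp\chi_{h,k}$ (note also that the Moyal expansion does not terminate in the direction where $r$-derivatives fall on $e^{2r}$). Concretely, the Weyl kernel of $\Op_h(e)$ is supported where the midpoint $\frac{r+r'}{2}$ lies in $\supp_r e\subset[R,\;R/2-\ln(2\pi|k|h)]$ and decays off the diagonal only like $C_Nh^{N-1}|r-r'|^{-N}$, while on that support $(2\pi kh)^2e^{2r'}\leq e^{R}e^{|r-r'|}$ with near-equality for large $r'$; polynomial off-diagonal decay cannot compensate exponential growth, so $\Op_h(e)\circ(2\pi kh)^2e^{2r}$ is in general not even a bounded operator on $L^2(\R)$, and no uniform $O(h)$ remainder bound is available. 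Nor can you retreat to applying the identity only to $\chi_Rv_{j,k}$: bounding $(\Op_h(e)P_{j,k}-\Op_h(a))(\chi_Rv_{j,k})$ would require decay of $v_{j,k}$ deep in the classically forbidden region (control of $e^{2r}v_{j,k}$), which the $L^2$ bounds on $v_{j,k},v_{j,k}'$ you use do not provide.

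The repair is exactly the device in the paper: put the spatial cutoffs inside the elliptic factor instead of composing with $P_{j,k}$ itself. Set $a_-:=\chi_R\chi_{h,k}(1-\chi_2)/p_{k,h}$ and $a_+:=\chi_R\chi_{h,k}\,p_{k,h}$, so that $a_-a_+=a$, with $a_-$ bounded in $S(\langle\xi\rangle^{-2})$ and $a_+$ in $S(\langle\xi\rangle^{2})$ uniformly in $j,k,h$, precisely because $(2\pi kh)^2e^{2r}\leq e^R$ on $\supp\chi_{h,k}$. The standard composition and Calder\'on--Vaillancourt theorems then give $\|\Op_h(a)-\Op_h(a_-)\Op_h(a_+)\|_{L^2\to L^2}\leq C_Rh$, which applied to $\chi_Rv_{j,k}$ yields the main term $C_Rh^2\|v_{j,k}\|^2$; and $\Op_h(a_+)$ is a second-order differential operator with coefficients supported in $\supp(\chi_R\chi_{h,k})$, so $\Op_h(a_+)(\chi_Rv_{j,k})$ can be computed exactly and, via the ODE, reduces to your commutator terms $-h^2(2\chi_R'v_{j,k}'+\chi_R''v_{j,k})$ (plus Weyl correction terms of the same size), of norm at most $C_Rh^2(\|v_{j,k}\|+\|v_{j,k}'\|)$; composing with the bounded $\Op_h(a_-)$ gives the $h^4\|v_{j,k}'\|^2$ contribution. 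With this placement of the cutoff your argument coincides with the paper's proof.
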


\begin{proof}
Let us denote here
\begin{align*}
f_{j,k}^{(1)} &:= \Op_h\left(\chi_R(r)\chi_{h,k}(r)(\xi^2-\omega_j^2+(2kh\pi)^2e^{2r})\right)(\chi_Rv_{j,k}) \\
&= \chi_R(r)\chi_{h,k}(r)(-2h^2\chi_R'(r)v_{j,k}'-h^2\chi_R''(r)v_{j,k}),
\end{align*} and 
\[f_{j,k} := \Op_h\left(\chi_R(r)\chi_{h,k}(r)\frac{1-\chi_2(\xi)}{\xi^2-\omega_j^2+(2kh\pi)^2e^{2r}}\right)f_{j,k}^{(1)}.\]
Now, using \cite[Theorem 4.23]{EZ}, (remember that inside the symbol, when $(2|k|h\pi)e^r>C_R$, the $\chi_1$ term destroys everything, so all relevant $S^0$ seminorms are bounded uniformly in $j$, $k$, $h$, as long as $h\leq |k|h \leq 3$ and $j$ is in the range) we may write, for some constant $K_R > 0$ depending only on $R$:
\begin{align*}
\left\|\frac{1}{h^2}f_{j,k}\right\|_{L^2}^2 &= \frac{1}{h^4}\left\|\Op_h\left(\chi_R(r)\chi_{h,k}(r)\frac{1-\chi_2(\xi)}{\xi^2-\omega_j^2+(2hk\pi)^2e^{2r}}\right)f^{(1)}_{j,k}\right\|_{L^2}^2\\
&\leq \frac{1}{h^4}\left\|\Op_h\left(\chi_R(r)\chi_{h,k}(r)\frac{1-\chi_2(\xi)}{\xi^2-\omega_j^2+(2hk\pi)^2e^{2r}}\right)\right\|_{L^2 \rightarrow L^2}^2\|f_{j,k}^{(1)}\|^2\\
&\leq K_R(\|v_{j,k}\|^2_{L^2} + \|v'_{j,k}\|^2_{L^2}).
\end{align*}
Now, when $h$ is small, $1 \leq |k| \leq 3\lambda$, $j$ is in the range, the symbols 
\begin{align*}
a_- := \chi_R(r)\chi_{h,k}(r)\frac{1-\chi_2(\xi)}{\xi^2-\omega_j^2+(2kh\pi)^2e^{2r}}&, \quad a_+ := \chi_R(r)\chi_{k,h}(r)(\xi^2-\omega_j^2+(2kh\pi)^2e^{2r});\\
&a_* := a_-a_+
\end{align*}
are bounded by a constant depending only on $R$ in respectively the class of symbols $S(\langle \xi \rangle^{-2})$, $S^0(\langle \xi \rangle^2)$, $S(1)$ (using the notation of \cite[Section 4.4.1]{EZ}), thus by \cite[Theorems 4.18, 4.23]{EZ} we have $\|\Op_h(a_-)\Op_h(a_+)-\Op_h(a_*)\|_{L^2 \rightarrow L^2} \leq C_Rh$, for some constant $C_R > 0$ depending only on $R$. 
Therefore, we get that for some constant $C_R > 0$ depending only on $R$, 
\begin{align*}
\|\Op_h(a_*)(\chi_Rv_{j,k})\|^2_{L^2} &\leq 2C_Rh^2\|\chi_Rv_{j,k}\|^2_{L^2} + 2\|f_{j,k}\|^2_{L^2}\\
&\leq C_R(h^2\|v_{j,k}\|^2_{L^2(c_0,+\infty)} + h^4\|v'_{j,k}\|^2_{L^2(c_0,+\infty)}).
\end{align*}
\end{proof}

\begin{cor}
\label{phase_loc}
There exists a constant $C_R$ depending only on $R$ such that when $j$ is in the range 
$$\sum_{1 \leq |k| \leq 3\lambda}{\|\chi_R(r)\Op_h((1-\chi_2)(\xi)\chi_R^2(r)\chi_{h,k}^2(r))(\chi_Rv_{j,k})\|_{L^2}^2} \leq C_Rh^2.$$ 
\end{cor}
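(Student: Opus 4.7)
The corollary is essentially a summation of the previous lemma over $k$, combined with the $L^2$-bounds from Corollary \ref{L2_v_bounds}. The cutoff $\chi_R(r)$ appearing on the outside is bounded by $1$, so I would first drop it:
\[\|\chi_R(r)\Op_h((1-\chi_2)(\xi)\chi_R^2(r)\chi_{h,k}^2(r))(\chi_Rv_{j,k})\|_{L^2}^2 \leq \|\Op_h((1-\chi_2)(\xi)\chi_R^2(r)\chi_{h,k}^2(r))(\chi_Rv_{j,k})\|_{L^2}^2.\]

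Next, I would apply the previous lemma termwise and sum over $1 \leq |k| \leq 3\lambda$. The two resulting sums are
\[h^2\sum_{1 \leq |k| \leq 3\lambda}\|v_{j,k}\|_{L^2(c_0,\infty)}^2 \quad\text{and}\quad h^4\sum_{1 \leq |k| \leq 3\lambda}\|v'_{j,k}\|_{L^2(c_0,\infty)}^2.\]
By \eqref{norm_est} the first of these is bounded by $h^2$; by \eqref{dvt_norm_est} the second is bounded by $h^4(2\lambda_j^2+1)$. Since $j$ is in the range, we have $h_j \geq h/2$, i.e.\ $\lambda_j \leq 2/h$, so $h^4(2\lambda_j^2+1) \leq 8h^2 + h^4 \leq 9h^2$ for $h$ small. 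Combining the two contributions yields the desired $C_R h^2$ bound.

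There is essentially no obstacle here: the work was done in the previous lemma, whose point was precisely to produce bounds with the right dependence on $\|v_{j,k}\|_{L^2}$ and $\|v'_{j,k}\|_{L^2}$ so that the sum over $k$ can be absorbed using \eqref{norm_est}--\eqref{dvt_norm_est}. The only thing to check carefully is that the $h^4\|v'_{j,k}\|^2$ term does not blow up when summed; this is exactly where the constraint that $j$ lies in the range (forcing $\lambda_j^2 = O(h^{-2})$) is used, so that the $h^4$ prefactor is just enough to compensate the $h^{-2}$ growth of $\sum_k \|v'_{j,k}\|^2_{L^2}$.
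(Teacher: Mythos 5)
Your proof is correct and follows exactly the route the paper intends: apply the preceding lemma termwise, sum over $k$ using \eqref{norm_est} and \eqref{dvt_norm_est}, and absorb the factor $2\lambda_j^2+1$ into $h^{-2}$ using the range condition $h/2 \leq h_j \leq 2h$. The paper's own proof is a one-line remark stating precisely this reduction.
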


\begin{proof}
It is a consequence of the previous lemma and of corollary \ref{L2_v_bounds}, more precisely estimates (\ref{norm_est}) and (\ref{dvt_norm_est}). 
\end{proof}
The result hereafter is the main property of localization we were aiming at: it tells us that each $\phi_R^4u_j$ is localized along his lowest modes in the cusp, and each of these modes is microlocalized in a compact zone that depends very little on $j$: that is, $v_{j,k}$ is microlocalized in the zone $|\xi| \leq 4$, $R \leq r \leq R/2-\ln{2h|k|\pi}$.

Let us first define the operator $A_{h,k} := \Op_h(\chi_2(\xi)\chi_R^2(r)\chi_{h,k}(r)^2)$. 
\begin{prop}
\label{full_loc}
Let $j \geq 0$ be in the range. For some universal constant $C > 0$, and some constant $C_R > 0$ depending only on $R$, 
$$\|\phi_R^4u_j\|_{L^2}^2 \leq 4\sum_{1 \leq |k| \leq 3\lambda}{\|\chi_R(r)A_{h,k}(\chi_Rv_{j,k})\|^2_{L^2}} + Ce^{-R} + C_Rh^2.$$
\end{prop}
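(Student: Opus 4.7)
The plan is to exploit the Fourier decomposition $u_j(r,\theta)=e^{r/2}\sum_{k\in\Z}v_{j,k}(r)e^{2i\pi k\theta}$ to reduce the claim to one-dimensional estimates on the radial modes. Using the cusp measure $e^{-r}\,dr\,d\theta$ and orthogonality, a direct computation gives
\[\|\phi_R^4u_j\|^2_{L^2}=\sum_{k\in\Z}\int\chi_R(r)^8|v_{j,k}(r)|^2\,dr=\sum_{k\in\Z}\|\chi_R^4 v_{j,k}\|^2_{L^2(\R)},\]
and I will split this sum into the three regimes $k=0$, $1\leq|k|\leq 3\lambda$, and $|k|>3\lambda$, handling each separately.

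The $k=0$ contribution vanishes since by Corollary \ref{defi_alphaq} we have $v_{j,0}\equiv 0$ on $\{r\geq c\}$, and in the regime $R>c$ in which the target bound is non-vacuous the cutoff $\chi_R$ is supported there. For the high modes $|k|>3\lambda$, I will use the pointwise bound $\chi_R(r)^8\leq e^{-2R}e^{2r}$ valid on $\{r\geq R\}$, so each term is dominated by $e^{-2R}\int e^{2r}|v_{j,k}|^2\,dr$. Since $(2\pi k)^2\geq(6\pi\lambda)^2$ on this range, estimate \eqref{exp_rmd_est} combined with $\lambda_j\leq 2\lambda$ gives a total contribution of at most a universal constant times $e^{-R}$.

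The intermediate regime $1\leq|k|\leq 3\lambda$ is the crux. I split further as $\chi_R^4v_{j,k}=\chi_R^4\chi_{h,k}^2v_{j,k}+\chi_R^4(1-\chi_{h,k}^2)v_{j,k}$. On the support of $1-\chi_{h,k}^2$, the definition of $\chi_{h,k}$ forces $(2\pi kh)^2e^{2r}\geq e^{R-2}$, i.e.\ $1\leq(2\pi kh)^2e^{2-R}e^{2r}$ pointwise, so \eqref{exp_rmd_est} together with $h^2\lambda^2=1$ once again yields a contribution of $Ce^{-R}$ after summing in $k$. For the main part I exploit that $\chi_R^2(r)\chi_{h,k}(r)^2$ is independent of $\xi$, hence its Weyl quantization coincides with multiplication by itself, giving
\[\chi_R^2\chi_{h,k}^2\cdot(\chi_R v_{j,k})=A_{h,k}(\chi_R v_{j,k})+\Op_h\bigl((1-\chi_2)(\xi)\chi_R^2\chi_{h,k}^2\bigr)(\chi_R v_{j,k}).\]
Multiplying by $\chi_R$, applying $\|a+b\|^2\leq 2\|a\|^2+2\|b\|^2$, summing in $k$, and invoking Corollary \ref{phase_loc} to absorb the second term into $C_R h^2$, I obtain $\sum\|\chi_R^4\chi_{h,k}^2v_{j,k}\|^2\leq 2\sum\|\chi_R A_{h,k}(\chi_R v_{j,k})\|^2+2C_R h^2$. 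One further $\|a+b\|^2\leq 2\|a\|^2+2\|b\|^2$ produces the factor of $4$ in front of the main sum, and adding the three regimes yields the proposition.

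The main bookkeeping point will be ensuring that the constant in front of $e^{-R}$ is genuinely universal while the $R$-dependent constant is attached only to $h^2$: all three sources of $e^{-R}$ (the zero mode, the high-$k$ sum, and the $(1-\chi_{h,k}^2)$ piece) are estimated purely via the algebraic inequality \eqref{exp_rmd_est} without any pseudodifferential calculus, so no $R$-dependence creeps in there, while the $R$-dependent constant is confined to the single application of Corollary \ref{phase_loc}. This clean separation is exactly what the structure of the statement demands.
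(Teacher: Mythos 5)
Your proof is correct and follows the same route as the paper: the same $L^2$-orthogonal decomposition of $\|\phi_R^4 u_j\|^2$ into Fourier modes, the same three-way split (zero mode, $1\leq|k|\leq 3\lambda$, $|k|>3\lambda$), the same use of \eqref{exp_rmd_est} for the high modes and the $(1-\chi_{h,k}^2)$ piece (these are precisely Lemmas \ref{mode_loc} and \ref{space_loc}), and the same appeal to Corollary \ref{phase_loc} after inserting $1=\chi_2(\xi)+(1-\chi_2)(\xi)$ into the $\xi$-independent multiplier. Your explicit remark that the Weyl quantization of a $\xi$-independent symbol is multiplication is the observation the paper uses implicitly, and the bookkeeping of which constants are universal versus $R$-dependent matches.
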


We split the proof in several steps.

\begin{lem}
\label{mode_loc}
Let $j \geq 0$ be in the range. Then 
$$\sum_{|k| > 3\lambda}{\|\chi_R^4v_{j,k}\|_{L^2}^2} \leq e^{-2R}.$$
\end{lem}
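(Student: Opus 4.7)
The statement we want to prove is essentially a "high-frequency cutoff" estimate: for any $j$ whose eigenvalue is comparable to $\lambda^{-1}=h^{-1}$, the Fourier modes $v_{j,k}$ with $|k|>3\lambda$ carry only an exponentially small amount of mass once we localize to the region $\{r\geq R\}$ where $\chi_R$ is supported. The plan is to exploit the two available a priori bounds: the support property of $\chi_R$, and the weighted $L^2$ bound \eqref{exp_rmd_est} from Corollary \ref{L2_v_bounds}, which comes from the Dirichlet energy. The hyperbolic weight $e^{2r}$ appearing in \eqref{exp_rmd_est} is the source of the exponential gain.

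Concretely, I would first use $|\chi_R^4(r)|^2\leq \mathbbm{1}_{r\geq R}(r)$ to write, for each $k$,
\[
\|\chi_R^4 v_{j,k}\|_{L^2}^2 \leq \int_R^\infty |v_{j,k}(r)|^2\, dr
= \int_R^\infty e^{-2r}\cdot e^{2r}|v_{j,k}(r)|^2\,dr
\leq e^{-2R}\int_{c_0}^\infty e^{2r}|v_{j,k}(r)|^2\,dr.
\]
Summing over $|k|>3\lambda$, I insert and extract the factor $(2\pi k)^2$, using that $(2\pi k)^2\geq (6\pi\lambda)^2$ on that range:
\[
\sum_{|k|>3\lambda}\|\chi_R^4 v_{j,k}\|_{L^2}^2
\leq \frac{e^{-2R}}{(6\pi\lambda)^2}\sum_{|k|>3\lambda}(2\pi k)^2\int_{c_0}^\infty e^{2r}|v_{j,k}|^2\,dr
\leq \frac{e^{-2R}}{36\pi^2\lambda^2}\Bigl(\lambda_j^2+\tfrac14\Bigr),
\]
where the last inequality uses exactly \eqref{exp_rmd_est}.

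Since $j$ is in the range, $\omega_j=h\lambda_j\leq 2$, i.e.\ $\lambda_j\leq 2\lambda$, so $\lambda_j^2+\tfrac14\leq 4\lambda^2+\tfrac14$. Substituting gives
\[
\sum_{|k|>3\lambda}\|\chi_R^4 v_{j,k}\|_{L^2}^2
\leq e^{-2R}\cdot\frac{4\lambda^2+\tfrac14}{36\pi^2\lambda^2}
\leq e^{-2R}
\]
for all sufficiently small $h$ (equivalently, large $\lambda$), since the fraction tends to $1/(9\pi^2)<1$. No real obstacle arises: the computation is essentially forced once one realizes that the exponential factor comes from $r\geq R$ combined with $e^{-2r}$, and that the high-frequency cutoff $|k|>3\lambda$ lets us pay $(2\pi k)^2$ for free using the energy bound. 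The only care needed is in matching the range of $j$ with the factor of $\lambda_j^2$, which is exactly what "$j$ in the range" (so $\lambda_j\leq 2\lambda$) is designed to control.
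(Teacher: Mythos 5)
Your proof is correct and follows essentially the same route as the paper: use the support of $\chi_R$ to extract the $e^{-2R}$ factor, insert $(2\pi k)^2/(c\lambda^2)\geq 1$ on the range $|k|>3\lambda$, and close with the energy bound \eqref{exp_rmd_est} together with $\lambda_j\leq 2\lambda$. The only cosmetic difference is that you keep the factor $\pi^2$ in the denominator (dividing by $36\pi^2\lambda^2$), giving a marginally sharper constant than the paper's $36\lambda^2$; the argument is otherwise identical.
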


\begin{proof}
Using (\ref{exp_rmd_est}), we obtain the sequence of inequalities
\[\begin{split}
\sum_{|k| > 3\lambda}{\|\chi_R^4v_{j,k}}\|_{L^2}^2 &\leq \frac{1}{36}\sum_{|k| > 3\lambda}{e^{-2R}\frac{(2k\pi)^2}{\lambda^2}\int_{R}^{\infty}{e^{2r}|v_{j,k}|^2\,dr}} \\
& \leq \frac{1}{36e^{2R}\lambda^2}\sum_{k \in \Z}{(2\pi k)^2\int_{c_0}^{\infty}{e^{2r}|v_{j,k}|^2\,dr}} \leq e^{-2R}\frac{\lambda_j^2+\frac{1}{4}}{36\lambda^2} \leq e^{-2R}
\end{split}\]
which proves the claim.
\end{proof}

\begin{lem}
\label{space_loc}
Let $j \geq 0$ be in the range. Then the following holds true:
$$\sum_{1 \leq |k| \leq 3\lambda}{\|\chi_R^4(1-\chi_{h,k}^2)v_{j,k}\|_{L^2}^2} \leq 40e^{-R} + 3h^2$$
\end{lem}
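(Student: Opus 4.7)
\emph{Plan.} The point is that on the support of $1-\chi_{h,k}^2$, the quantity $(2\pi k h)^2 e^{2r}$ is large, and bound~\eqref{exp_rmd_est} provides exactly the integrability we need to exploit this.

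First, I would unpack the definition of $\chi_{h,k}$. By construction, $\chi_{h,k}(r)<1$ can only happen when $-\ln(2|k|h\pi)-r+R/2<1$, i.e.\ when $r>R/2-\ln(2|k|h\pi)-1$, which rearranges to
\[(2\pi|k|h)^2 e^{2r} > e^{R-2}\quad \text{on}\;\supp(1-\chi_{h,k}^2).\]
Hence on that support one has the pointwise inequality $1 \leq e^{2}h^{2}e^{-R}(2\pi k)^{2}e^{2r}$. Combined with $|\chi_R|\leq 1$ and $|1-\chi_{h,k}^2|\leq 1$, this yields the key estimate
\[\|\chi_R^4(1-\chi_{h,k}^2)v_{j,k}\|_{L^2}^2 \leq e^{2}h^{2}e^{-R}\,(2\pi k)^2\!\int_{c_0}^{\infty} e^{2r}|v_{j,k}|^2\,dr.\]

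Next, I would sum this inequality over $1\leq |k|\leq 3\lambda$, extend the sum to all $k\in\Z$ (which only makes the right-hand side larger), and invoke \eqref{exp_rmd_est}:
\[\sum_{1\leq |k|\leq 3\lambda}\|\chi_R^4(1-\chi_{h,k}^2)v_{j,k}\|_{L^2}^2 \leq e^{2}h^{2}e^{-R}\left(\lambda_j^2+\tfrac14\right).\]
Since $j$ is in the range, $h_j\geq h/2$, so $\lambda_j^2\leq 4/h^2$. Plugging this in:
\[e^{2}h^{2}e^{-R}\bigl(\lambda_j^2+\tfrac14\bigr) \leq 4e^{2}e^{-R}+\tfrac{e^{2}}{4}h^2 e^{-R}.\]
Finally I use the crude numerics $4e^2 < 40$ and $e^2/4 < 3$ (with $e^{-R}\leq 1$ to absorb it in the second term) to conclude the claimed bound $40e^{-R}+3h^2$.

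The entire argument is essentially a one-line computation once the geometric meaning of $\chi_{h,k}$ is recognized; there is no real obstacle. The only thing one has to be careful about is that the lower bound for $(2\pi k h)^2 e^{2r}$ on the support of $1-\chi_{h,k}^2$ is made quantitative enough (with the $e^{R-2}$) so that, after using \eqref{exp_rmd_est} and $\lambda_j\leq 2/h$, the resulting constant is indeed below the prescribed $40$.
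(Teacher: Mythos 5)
Your proof is correct and follows essentially the same route as the paper: unpack $\chi_{h,k}$ to see that $(2\pi|k|h)^2e^{2r}\geq e^{R-2}$ on $\supp(1-\chi_{h,k}^2)$, insert this weight, sum over $k$ and invoke~\eqref{exp_rmd_est}, then use $\lambda_j\leq 2/h$ for $j$ in the range to land on $4e^2e^{-R}+\tfrac{e^2}{4}h^2e^{-R}\leq 40e^{-R}+3h^2$. (Incidentally, your version fixes a small typo in the paper's third displayed line, where $(2\pi kh)^2$ should read $(2\pi k)^2$.)
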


\begin{proof}
Using again (\ref{exp_rmd_est}),
\begin{align*}
\sum_{1 \leq |k| \leq 3\lambda}{\|\chi_R^4(1-\chi_{h,k}^2)v_{j,k}\|_{L^2}^2} &\leq \sum_{1 \leq |k| \leq 3\lambda}{\int_{R/2-1-\ln{2\pi|k|h}}^{\infty}{|v_{j,k}|^2(r)\,dr}}\\
&\leq \sum_{1 \leq |k| \leq 3\lambda}{\int_{R/2-1-\ln{2\pi|k|h}}^{\infty}{e^{-R}(2e\pi kh)^2e^{2r}|v_{j,k}|^2(r)\,dr}}\\
&\leq e^2h^2e^{-R}\sum_{k \in \Z}(2\pi kh)^2{\int_{c_0}^{\infty}{e^{2r}|v_{j,k}|^2(r)\,dr}}\\
&\leq 40e^{-R}+\frac{1}{4}e^{2-R}h^2 \leq 40e^{-R}+3h^2
\end{align*}
which proves the claim.
\end{proof}

Now, we can prove Proposition \ref{full_loc}:

\begin{proof}[Proof of Proposition \ref{full_loc}]
One easily sees that:
$$\|\phi_R^4u_j\|_{L^2}^2 = \sum_{|k| \geq 1}{\int_R^{\infty}{\chi_R(r)^8|v_{j,k}|^2(r)\,dr}}\leq \sum_{ |k|\geq 1}{\|\chi_R^4v_{j,k}\|_{L^2}^2}.$$
Now, we split the sum between the $|k| > 3\lambda$, the sum of which is not greater than $e^{-2R}$ (Lemma \ref{mode_loc}), and the $1 \leq |k| \leq 3\lambda$.
Moreover, if $1 \leq |k| \leq 3\lambda$, 
$$\|\chi_R^4v_{j,k}\|^2_{L^2} \leq 2\|\chi_R^4(1-\chi_{h,k}^2)v_{j,k}\|^2_{L^2}+2\|\chi_R^4\chi_{h,k}^2v_{j,k}\|^2_{L^2}.$$
From Lemma \ref{space_loc} we see that the first term contributes only as $6h^2+80e^{-R}$, now we have to assess the second term. 
Now, $$2\|\chi_R^4\chi_{h,k}^2v_{j,k}\|^2_{L^2} \leq 4\|\chi_RA_{h,k}(\chi_Rv_{j,k})\|^2_{L^2} + 4\|\chi_R\Op_h((1-\chi_2(\xi))\chi_{h,k}^2(r)\chi_R(r)^2)(\chi_Rv_{j,k})\|^2_{L^2}.$$
We saw from Corollary \ref{phase_loc} that the second term contributes only as $C_Rh^2$, where $C_R > 0$ depends only on $R$, and this ends the proof. 
\end{proof}

Now, we turn the pointwise localization estimate we have on the $\phi_R^4u_j$ into an average estimate on $j$. Thanks to Hilbert-Schmidt norm estimates (operators will always be considered as from the relevant $L^2$ spaces into themselves) we obtain significantly better results.

Let us define the operator $A'_{h,k} := A_{h,k}\chi_R$, then let 
\[Aw(r,\theta) := \chi_R(r)e^{r/2}\sum_{1 \leq |k| \leq 3\lambda}{(A_{h,k}(\chi_Rw_k))(r)e^{2ik\pi\theta}}\] 
for every $r > c_0,\theta \in \R/\Z$, where $w(r,\theta) = e^{r/2}\sum_{k \in \Z}{w_k(r)e^{2ik\pi\theta}}$.

\begin{prop}
There exist constants $C > 0$ universal, and $C_R > 0$ depending on $R$ only such that:
\[h^2\sum_{h/2 \leq h_j \leq 2h}{\|\phi_R^4u_j\|^2_{L^2}} \leq C_Rh^2 + Ce^{-R} + 4h^2\sum_{1 \leq |k| \leq 3\lambda}{\|A'_{k,h}\|^2_{\mathrm{HS}}}.\]
\end{prop}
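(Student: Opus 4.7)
The plan is to combine the pointwise estimate of Proposition~\ref{full_loc} with a Hilbert--Schmidt argument on the operator appearing on the right-hand side, and to absorb the constant error terms using Weyl's law (Proposition~\ref{Weyl}).

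First I would sum the pointwise inequality
\[\|\phi_R^4u_j\|_{L^2}^2 \leq 4\sum_{1\leq|k|\leq 3\lambda}\|\chi_R A_{h,k}(\chi_R v_{j,k})\|^2_{L^2} + Ce^{-R} + C_R h^2\]
over those $j$ with $h/2\leq h_j\leq 2h$ and multiply by $h^2$. Since Proposition~\ref{Weyl} provides $|\{j:h/2\leq h_j\leq 2h\}|\leq C h^{-2}$, the constant error terms contribute only $h^2\cdot Ch^{-2}(Ce^{-R}+C_R h^2)=C'e^{-R}+C'_R h^2$, which already has the required shape. It therefore suffices to bound, for each fixed $k$, the sum $\sum_{h/2\leq h_j\leq 2h}\|\chi_R A_{h,k}(\chi_R v_{j,k})\|^2_{L^2}$ by $\|A'_{h,k}\|^2_{\mathrm{HS}}$.

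For that, I introduce the ``$k$-th mode extraction'' operator $P_k:L^2(X)\to L^2(\R)$ defined by
\[(P_k u)(r):=\mathbbm{1}_{r>c_0}\,e^{-r/2}\int_{\R/\Z} u(r,\theta)\,e^{-2ik\pi\theta}\,d\theta,\]
so that $P_k u_j=\mathbbm{1}_{r>c_0}v_{j,k}$. A direct computation gives the adjoint $(P_k^*\phi)(r,\theta)=\mathbbm{1}_{r>c_0}e^{r/2}e^{2ik\pi\theta}\phi(r)$ and hence $P_kP_k^*=\mathbbm{1}_{r>c_0}$ on $L^2(\R)$. Since $\chi_R$ is supported in $\{r\geq R\}\subset\{r>c_0\}$, I have $\chi_RA_{h,k}(\chi_R v_{j,k})=B_k u_j$ with $B_k:=\chi_R A_{h,k}\chi_R P_k$, and
\[B_k B_k^*=\chi_R A_{h,k}\chi_R\,(P_kP_k^*)\,\chi_R A_{h,k}^*\chi_R=(\chi_R A_{h,k}\chi_R)(\chi_R A_{h,k}\chi_R)^*,\]
so $\|B_k\|_{\mathrm{HS}}^2=\|\chi_R A_{h,k}\chi_R\|_{\mathrm{HS}}^2$.

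Extending $(u_j)$ to an orthonormal basis of $L^2(X)$, Bessel's inequality yields $\sum_{h/2\leq h_j\leq 2h}\|B_k u_j\|^2\leq\|B_k\|^2_{\mathrm{HS}}$. To relate this to $\|A'_{h,k}\|^2_{\mathrm{HS}}=\|A_{h,k}\chi_R\|^2_{\mathrm{HS}}=\Tr(\chi_R A_{h,k}^*A_{h,k}\chi_R)$, I use the cyclicity of the trace to write
\[\|\chi_R A_{h,k}\chi_R\|_{\mathrm{HS}}^2=\Tr(\chi_R A_{h,k}^*\chi_R^2 A_{h,k}\chi_R)\]
and invoke the inequality $T^*\chi_R^2 T\leq T^*T$ (valid since $0\leq\chi_R^2\leq 1$) with $T=A_{h,k}\chi_R$ to obtain $\|\chi_R A_{h,k}\chi_R\|_{\mathrm{HS}}^2\leq\|A'_{h,k}\|_{\mathrm{HS}}^2$. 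Summing over $1\leq|k|\leq 3\lambda$ and combining with the Weyl contribution completes the proof. There is no real obstacle here: the argument is a matter of bookkeeping, the only slightly delicate point being the identification $P_kP_k^*=\mathbbm{1}_{r>c_0}$ and the resulting reduction of $\|B_k\|_{\mathrm{HS}}$ to an honest Hilbert--Schmidt norm of a pseudodifferential operator truncated to the cusp.
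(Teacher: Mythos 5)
Your proof is correct and follows essentially the same strategy as the paper: sum the pointwise estimate of Proposition~\ref{full_loc}, absorb the constant error terms using Weyl's law, and reduce the remaining double sum to the Hilbert--Schmidt norms $\|A'_{h,k}\|_{\mathrm{HS}}$. The only difference is bookkeeping---the paper bundles all modes into a single operator $A$ and enumerates an adapted orthonormal basis of $L^2(X)$ to obtain $\|A\|^2_{\mathrm{HS}}\leq\sum_k\|\chi_R A_{h,k}\chi_R\|^2_{\mathrm{HS}}$, whereas you run the Bessel/trace argument mode-by-mode via the explicit projections $P_k$, arriving at the same bound $\|\chi_R A_{h,k}\chi_R\|_{\mathrm{HS}}\leq\|A'_{h,k}\|_{\mathrm{HS}}$ by the same positivity consideration.
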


\begin{proof}
From Proposition \ref{full_loc}, we know that for any $j$ in range, $\|\phi_R^4u_j\|^2 \leq 200e^{-R} + C_Rh^2 + 4\|Au_j\|^2_{L^2}$.
So, using Weyl's law, and the fact the the $(u_j)$ are orthonormal, $$h^2\sum_{h/2 \leq h_j \leq 2h}{\|\phi_R^4u_j\|^2_{L^2}} \leq Ce^{-R} + C_Rh^2 + 4h^2\|A\|^2_{\mathrm{HS}}.$$
Now, let $(f_p)$ be an orthonormal basis of $L^2(R,\infty)$, let $(g_q)$ be an orthonormal basis of $\{f \in L^2(X),\,\mathbbm{1}_{r > R}f = 0\}$. Then the family of all $e^{r/2}f_p(r)e^{2ik\pi\theta}$ and the $g_q$, is an orthonormal basis of $L^2(X)$. 
Therefore, when $f$ is an element of this orthonormal basis, we realize that only when $f = e^{r/2}f_p(r)e^{2ik\pi\theta}$, $1 \leq |k| \leq 3\lambda$, $\|Af\|^2$ does not vanish. Besides, it will always be lower or equal than $\|(\chi_R \circ A_{h,k}\circ\chi_R)f_p\|^2_{L^2(\R)}$. From this, it follows that 
$$\|A\|^2_{\mathrm{HS}} \leq \sum_{1 \leq |k| \leq 3\lambda}{\|\chi_R \circ A_{k,h} \circ \chi_R\|^2_{\mathrm{HS}}} \leq \sum_{1 \leq |k| \leq 3\lambda}{\|A'_{h,k}\|^2_{\mathrm{HS}}},$$
which completes the proof.
\end{proof}

Now, it is easy to give an upper bound on the Hilbert-Schmidt norm of the operators, and to turn it into a complete estimate:

\begin{prop} 
\label{bound on A'hk}
The following bound holds true for $|k|\leq 3\lambda$:
$$\|A'_{h,k}\|^2_{\mathrm{HS}} \leq 4(h\pi)^{-1}\left(-\ln{2|k|h\pi}-R/2\right)^+$$
\end{prop}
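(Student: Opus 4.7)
The plan is to invoke the standard Hilbert--Schmidt identity for Weyl quantization in one dimension,
\[
\|b^w(r,hD)\|_{\mathrm{HS}}^2 \;=\; \frac{1}{2\pi h}\int_{\R^2}|b(r,\xi)|^2\,dr\,d\xi,
\]
applied to the symbol $b(r,\xi)=\chi_2(\xi)\chi_R(r)^2\chi_{h,k}(r)^2$ of $A_{h,k}$, and then to exploit the product structure of $b$ together with the small overlap between the supports of $\chi_R$ and $\chi_{h,k}$.

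First I would observe that since $0\leq\chi_R\leq 1$, the multiplication operator by $\chi_R$ has $L^2\to L^2$ operator norm at most $1$, so the submultiplicativity $\|AB\|_{\mathrm{HS}}\leq\|A\|_{\mathrm{HS}}\|B\|_{\mathrm{op}}$ gives
\[
\|A'_{h,k}\|_{\mathrm{HS}}^2 \;=\;\|A_{h,k}\chi_R\|_{\mathrm{HS}}^2 \;\leq\; \|A_{h,k}\|_{\mathrm{HS}}^2.
\]
Next, since $b$ factors as a product of a function of $\xi$ only and a function of $r$ only, the Hilbert--Schmidt identity splits:
\[
\|A_{h,k}\|_{\mathrm{HS}}^2 \;=\; \frac{1}{2\pi h}\Bigl(\int_\R|\chi_2(\xi)|^2\,d\xi\Bigr)\Bigl(\int_\R \chi_R(r)^4\chi_{h,k}(r)^4\,dr\Bigr).
\]

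Then I would estimate the two one-variable integrals. For the $\xi$-integral, $|\chi_2|\leq 1$ and $\supp\chi_2\subset(-4,4)$ give $\int|\chi_2|^2\,d\xi\leq 8$. For the $r$-integral, the definitions of $\chi_R$ and $\chi_{h,k}$ imply that $\chi_R^4$ is supported in $[R,\infty)$ and $\chi_{h,k}^4$ is supported in $\bigl(-\infty,\,R/2-\ln(2|k|h\pi)\bigr]$, while both are bounded by $1$. The product therefore vanishes outside an interval of length exactly $\bigl(-\ln(2|k|h\pi)-R/2\bigr)^+$, and on this interval the integrand is bounded by $1$. Combining,
\[
\|A'_{h,k}\|_{\mathrm{HS}}^2 \;\leq\; \frac{8}{2\pi h}\bigl(-\ln(2|k|h\pi)-R/2\bigr)^+ \;=\; \frac{4}{\pi h}\bigl(-\ln(2|k|h\pi)-R/2\bigr)^+,
\]
which is the desired bound. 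The argument is essentially mechanical: there is no real obstacle, only the two minor points of recalling the Weyl Hilbert--Schmidt formula and checking that $\chi_R$ can be absorbed by an operator-norm bound; everything else is a direct measure computation.
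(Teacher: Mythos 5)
Your proof is correct and uses essentially the same mechanism as the paper's: both compute the Hilbert--Schmidt norm via the standard kernel/Plancherel identity for the one-dimensional Weyl quantization, reducing the bound to the $L^2$ norm of the symbol and then to the measure of the overlap of the supports of $\chi_R$ and $\chi_{h,k}$. The only cosmetic difference is how the trailing multiplication by $\chi_R$ is discarded: the paper carries it through the kernel integral and drops $\chi_R^2(y)\leq 1$ pointwise, whereas you absorb it up front via $\|A_{h,k}\chi_R\|_{\mathrm{HS}}\leq\|A_{h,k}\|_{\mathrm{HS}}\|\chi_R\|_{\mathrm{op}}$; this is a purely presentational choice.
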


\begin{proof}
Let $\psi \in \CIc(\R^2)$, $\hat{\psi}$ be its Fourier transform with respect to its second variable. Let $T = \Op_h(\psi)\chi_R$. For any $f \in L^2(\R)$, 
$$2h\pi Tf(x) = \int_{\R^2}{e^{i\frac{\xi(x-y)}{h}}\psi\left(\frac{x+y}{2},\xi\right)\chi_R(y)f(y)\,dyd\xi} = \int_{\R}{\hat{\psi}\left(\frac{x+y}{2},\frac{y-x}{h}\right)\chi_R(y)f(y)\,dy},$$
therefore 
\[\begin{split}
\|2h\pi T\|^2_{{\rm HS}} = & \int_{\R^2}{\left|\hat{\psi}\left(\frac{x+y}{2},\frac{y-x}{h}\right)\right|^2\chi_R^2(y)\,dxdy} \leq \int_{\R^2}{\left|\hat{\psi}\left(\frac{x+y}{2},\frac{y-x}{h}\right)\right|^2\,dxdy}\\
&\leq h\int_{\R^2}{\left|\hat{\psi}(x,y)\right|^2\,dydx} = 2h\pi\|\psi\|_{L^2}^2.
\end{split}\]
Now, we obtain
\[\begin{split}
\|\chi_2(\xi)\chi_R^2(r)\chi_1^2(-\ln{2|k|h\pi}-r+R/2)\|^2_{L^2} &
\leq \int_{\R^2}\mathbbm{1}_{\{|\xi| \leq 4\}}\mathbbm{1}_{\{r \geq R\}}\mathbbm{1}_{\{r \leq R/2
-\ln{2|k|h\pi}\}}\,d\xi dr \\
& \leq 8\left(-\ln{2|k|h\pi}-R/2\right)^+.
\end{split}\]
This completes the proof.
\end{proof}

\begin{cor}
The following estimate holds true:
$$4h^2\sum_{1 \leq |k| \leq 3\lambda}{\|A'_{k,h}\|^2_{\mathrm{HS}}} \leq Ce^{-R/2} + C_Rh,$$ where $C> 0$ is universal and $C_R$ depends only on $R$.
\end{cor}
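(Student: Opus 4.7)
The plan is to reduce the corollary directly to the bound supplied by Proposition \ref{bound on A'hk} and then estimate the resulting arithmetic sum by a standard sum-to-integral comparison. Set
\[
K_h \defeq \frac{e^{-R/2}}{2h\pi},
\]
and observe that the positive part $\bigl(-\ln(2|k|h\pi)-R/2\bigr)^+$ equals $\ln(K_h/|k|)$ precisely when $|k|\leq K_h$ and vanishes otherwise. In particular, if $K_h<1$ the sum we are bounding is empty and the inequality $e^{-R/2}<2\pi h$ makes the target bound $Ce^{-R/2}+C_Rh$ trivial. We may thus assume $K_h\geq 1$, in which case the constraint $|k|\leq K_h$ is automatically stronger than $|k|\leq 3\lambda$ since $K_h\leq 1/(2\pi h)<3/h$.

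Next, I plug the bound of Proposition \ref{bound on A'hk} into the sum and use the symmetry $k\leftrightarrow -k$ to write
\[
4h^2\sum_{1\leq |k|\leq 3\lambda}\|A'_{h,k}\|^2_{\mathrm{HS}}\leq \frac{16h}{\pi}\sum_{1\leq |k|\leq K_h}\ln\!\frac{K_h}{|k|}=\frac{32h}{\pi}\sum_{k=1}^{\lfloor K_h\rfloor}\ln\!\frac{K_h}{k}.
\]
The function $x\mapsto \ln(K_h/x)$ is positive and decreasing on $(0,K_h)$, so the standard comparison of a monotone sum with its integral yields
\[
\sum_{k=1}^{\lfloor K_h\rfloor}\ln\!\frac{K_h}{k}\leq \int_0^{K_h}\ln\!\frac{K_h}{x}\,dx = K_h\int_0^1\ln(1/u)\,du = K_h.
\]

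Combining these steps gives
\[
4h^2\sum_{1\leq |k|\leq 3\lambda}\|A'_{h,k}\|^2_{\mathrm{HS}}\leq \frac{32h}{\pi}\,K_h=\frac{16}{\pi^2}\,e^{-R/2},
\]
which is of the desired form with a universal constant. No genuine obstacle is present: the bookkeeping point that requires a little care is simply checking that the positive part of the logarithm is supported on $|k|\leq K_h\leq 3\lambda$, and that the monotone-decreasing comparison is applicable after absorbing the singularity of $\ln(K_h/x)$ at $x=0$ into the integral, which is integrable. The $C_Rh$ term allowed by the statement is not even needed, but the weaker form of the claim is convenient for the subsequent application in the proof of Proposition \ref{cuspanalysis-goal}.
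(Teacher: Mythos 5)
Your argument is correct and it starts from exactly the same place as the paper — the bound of Proposition~\ref{bound on A'hk} together with the observation that the positive part is supported on $|k|\leq K_h = e^{-R/2}/(2\pi h)$, which in turn sits inside $|k|\leq 3\lambda$. The only divergence is how the resulting logarithmic sum is estimated: the paper expands $-\sum_{k\leq n}\ln k = -\ln n!$ and invokes Stirling's formula, which requires a cancellation of the two $n\ln n$ terms and leaves behind a residual constant that produces the $C_R h$ (in fact universal) correction in the statement; you instead note that $x\mapsto \ln(K_h/x)$ is positive and decreasing on $(0,K_h)$ and compare the sum directly with the integral $\int_0^{K_h}\ln(K_h/x)\,dx = K_h$, which is both more elementary and slightly sharper — it yields $4h^2\sum\|A'_{h,k}\|^2_{\mathrm{HS}}\leq (16/\pi^2)e^{-R/2}$ with no $h$-dependent error term at all. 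Since the corollary only needs an upper bound of this shape, and the extra $C_Rh$ is later dominated anyway in the proof of Proposition~\ref{cuspanalysis-goal}, either form does the job; your version just avoids the bookkeeping around Stirling and floor functions.
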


\begin{proof}
Using Stirling's formula, assuming that $2\pi he^{R/2} \leq 1$ (else anyway the sum is just zero and the bound holds), for some universal constant $C$,
\begin{align*}
\frac{h}{4}\sum_{1 \leq |k| \leq 3\lambda}{\|A'_{k,h}\|^2_{HS}} &\leq \sum_{1 \leq k \leq 3\lambda}{\left(-\ln{2kh\pi}-\frac{R}{2}\right)^+}\\
&\leq \sum_{1 \leq k \leq (2\pi)^{-1}\lambda e^{-R/2}}{-\ln{2kh\pi}-R/2}\\
&\leq -\frac{\lambda e^{-R/2}}{2\pi}\ln{2h\pi} - \frac{\lambda e^{-R/2}}{2\pi}\frac{R}{2} - \sum_{1 \leq k \leq (2\pi)^{-1}\lambda e^{-R/2}}{\ln{k}}\\
&\leq \frac{\lambda e^{-R/2}}{2\pi}\ln{\frac{\lambda e^{-R/2}}{2\pi}} - \frac{\lambda e^{-R/2}}{2\pi}\ln\left(\frac{\lambda e^{-R/2}}{2\pi}\right) + \frac{\lambda e^{-R/2}}{2\pi} + C\\
&\leq \frac{e^{-R/2}}{2h\pi} +  \frac{Ch}{h}
\end{align*}
\end{proof}

\begin{cor}\label{boundY}
For some universal constant $C > 0$ and some constant $C_R$ depending only on $R$, 
one has $Y(\phi_R^4,h)^2 \leq Ce^{-R/2} + C_Rh$.
\end{cor}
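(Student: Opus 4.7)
The plan is to observe that the claimed bound is essentially a repackaging of the two estimates that immediately precede it, once $Y(\phi_R^4,h)$ has been correctly interpreted.

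The first step is to rewrite $\Op_{h_j}(\phi_R^4)$. The symbol $\phi_R^4$ depends only on $x$, not on $\xi$, so in each local chart the Weyl quantization $(\phi_R^4)^w(x,hD)$ reduces to multiplication by $\phi_R^4$. Since the family $(\chi_i^2)_i$ is a partition of unity on $X$, the sum defining $\Op_h(\phi_R^4)$ collapses to multiplication by $\phi_R^4$. (The quantization was defined in Section~1.3 for $S^0_{\rm comp}$ symbols, and $\phi_R^4$ is not space-compactly supported, but the extension is harmless because every local piece is just a multiplication by a bounded function.) Consequently
$$Y(\phi_R^4,h)^2 \;=\; h^2 \sum_{h/2 \leq h_j \leq 2h} \|\phi_R^4 u_j\|_{L^2}^2.$$

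Once this identification is made, the rest is immediate. I would apply the preceding proposition, which bounds the right-hand side by
$$C_R h^2 + C e^{-R} + 4 h^2 \sum_{1 \leq |k| \leq 3\lambda} \|A'_{h,k}\|_{\mathrm{HS}}^2,$$
and then invoke the corollary just above, which bounds the last sum by $C e^{-R/2} + C_R h$. Combining the two estimates and absorbing $C_R h^2 \leq C_R h$ (for $h$ small) and $C e^{-R} \leq C e^{-R/2}$ into the two error terms yields exactly the stated inequality.

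All the substantive work has already been carried out earlier in Section~5: the Fourier mode decomposition of $u_j$ in the cusp, the identification of the ODE satisfied by $v_{j,k}$, the microlocalization of $v_{j,k}$ to the region $\{\xi^2+(2\pi kh)^2 e^{2r}\approx \omega_j^2\}$ via the cutoffs $\chi_2$ and $\chi_{h,k}$, the tail estimates killing the high-frequency modes $|k|>3\lambda$ and the large-$r$ region, and finally the explicit Hilbert--Schmidt bound on $A'_{h,k}$ via the Fourier-transform computation. Consequently there is no real obstacle at this step; the only minor subtlety is the legitimacy of writing $Y(\phi_R^4,h)$ for a symbol that is not space-compactly supported, and this is precisely what the first step resolves.
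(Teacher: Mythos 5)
Your proposal is correct and follows exactly the route the paper intends: the paper's proof is the one-liner ``It is a consequence of all that precedes,'' i.e.\ combine the penultimate proposition with the immediately preceding corollary and absorb $C_Rh^2$ into $C_Rh$ and $Ce^{-R}$ into $Ce^{-R/2}$. You additionally spell out the (correct) observation that $\Op_{h_j}(\phi_R^4)$ is just multiplication by $\phi_R^4$, which the paper leaves implicit; that is the right way to reconcile the definition of $Y$ with a symbol that is not space-compactly supported.
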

\begin{proof}
It is a consequence of all that precedes.
\end{proof}

\begin{proof}[Proof of Proposition \ref{cuspanalysis-goal}] 
We write 
\begin{align*}
M(1-\phi_R^8, \lambda)^2 &= \frac{1}{N(\lambda)}\sum_{\lambda_j \leq \lambda}{\left|\langle\Op_{h_j}(1-\phi_R^8)u_j,\,u_j\rangle-\int_{S^*X}{1-\phi_R^8}\right|^2}\\
&\leq \frac{1}{N(\lambda)}\sum_{\lambda_j \leq \lambda}
\left|\int_X \phi_R^8|u_j|^2\,d{\rm v}_g -\int_{S^*X}{\phi_R^8}\right|^2\\
&\leq \frac{1}{N(\lambda)}\sum_{\lambda_j \leq \lambda}\left|\int_X\phi_R^8|u_j|^2\,d{\rm v}_g\right|^2 + \left(\int_{S^*X}{\phi_R^8}\right)^2\\
&= \frac{1}{N(\lambda)}\sum_{\lambda_j \leq \lambda}\left|\int_X\phi_R^8|u_j|^2\,d{\rm v}_g\right|^2 + \frac{1}{{\rm Vol}(X)}\left(\int_X \phi_R^8\,d{\rm v}_g\right)^2.
\end{align*}

The second term is lower than some $Ce^{-2R}$ for some constant $C$ independent of $R$, because $d{\rm v}_g(r,\theta) = e^{-r}drd\theta$.
The first term is not greater than 
$$\frac{1}{N(\lambda)}\sum_{1 \leq 4^k \leq 4h_0\lambda}{2^{2-4k}\lambda^2Y\left(\phi_R^4,\frac{2^{2k-1}}{\lambda}\right)^2},$$
which by Corollary \ref{boundY} is not greater than (using again Weyl's law)
\[\frac{C}{\lambda^2}\sum_{1 \leq 4^k \leq 4h_0\lambda}{Ce^{-R/2}2^{2-4k}\lambda^2 + C_R2^{1-2k}\lambda} \leq Ce^{-R/2} + C_R\frac{1}{\lambda}.\]
This concludes the proof. \end{proof}

\section{Proof of the main theorem}

Let $a \in S_{\rm comp}^0$ and assume first that $\int_{S^* X}{a\,d\mu} = 0$.
We let $T > 0$ and $\eps > 0$. We may write $a = a_1+a_2$, where $a_1$ is $S^{-\infty}$ 
satisfies $\supp(a_1)\subset \Sigma_T$, and $\int_{S^*X}{|a_2|^2} \leq \eps^2$.
Then, as $\lambda \rightarrow \infty$,  
\[M(a,\lambda) \leq M(a_1,\lambda) + C\eps \leq M(\langle a_1\rangle_T,\lambda) + C\eps \leq M(\langle a\rangle_T,\lambda) + 2C\eps \leq C(\|\langle a\rangle_T\|_{L^2(S^*X)} + 2\eps)\] 
where we used Proposition \ref{variancebound} for the last inequality.
So we let $\eps$ go to zero first, and then $T$ go to $\infty$ and the $L^2$ ergodic theorem proves the result.

In the general case, let $a \in S_{\rm comp}^0(X)$ and let $\alpha:=\int_{S^*X}a$. Let us denote, for every $R > e^{c_0}$, $I_R:=\int_{S^*X}\phi_R^8$. Then $I_R = \mathcal{O}(e^{-R})$, thus, if $R$ is large enough, $a_R = a-\frac{\alpha}{1-I_R}(1-\phi_R^8)$ belongs to  $S_{\rm comp}^0$ with 
\[\int_{S^*X}a_R=0.\]
Thus we have $M(a_R,\lambda) \rightarrow 0$ as $\lambda\to \infty$. Now, by Proposition \ref{cuspanalysis-goal} 
\[\limsup_{\lambda \rightarrow \infty}\,M\left(\frac{\alpha}{1-I_R}(1-\phi_R^8),\lambda\right) \leq \frac{C|\alpha|}{1-I_R}e^{-R/4}.\] 
Thus, \[\limsup_{\lambda \rightarrow \infty}\,M(a,\lambda) \leq \frac{C|\alpha|}{1-I_R}e^{-R/4}.\] 
Letting $R \rightarrow \infty$ yields $M(a,\lambda) \rightarrow 0$ and the proof of Theorem \ref{main} is complete.

\appendix

\section{Trace of a pseudo-differential operator}

\begin{prop}
\label{trace}
Let $a \in C^\infty_c(T^*X)$. Then ${\rm Op}_h(a)$ is trace class and
\[\Tr_{L^2 \rightarrow L^2}\left(\Op_h(a)\right) = \frac{1}{(2\pi h)^2}\int_{T^*X}{a(x,\xi)\,dxd\xi}.\]
\end{prop}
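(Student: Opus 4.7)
The plan is to reduce the statement to the standard Euclidean trace formula for Weyl quantization, and then carefully track the effect of the partition of unity, the chart diffeomorphisms $\varphi_i$, and the associated Jacobians.

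First, since $a\in C^\infty_c(T^*X)$ has compact space support, only finitely many charts $U_i$ meet $\pi(\supp(a))$, so $\Op_h(a)$ is a finite sum of operators of the form $T_i := \chi_i(\varphi_i)^*((\varphi_i)_*a)^w(x,hD)(\varphi_i)_*\chi_i$. Each pushforward symbol $(\varphi_i)_*a$ lies in $C^\infty_c(V_i\times \R^2)$, hence its Weyl quantization has smooth Schwartz kernel and is trace class, and so is $T_i$. It thus suffices to prove the formula on each chart and sum.

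The Euclidean input is the classical identity that for $b\in C^\infty_c(V\times \R^n)$ one has $\Tr(b^w(x,hD))=(2\pi h)^{-n}\int b\,dy\,d\eta$, which follows by evaluating the Schwartz kernel
\[K_b(y,y')=\frac{1}{(2\pi h)^n}\int e^{i\langle y-y',\eta\rangle/h}b\!\left(\tfrac{y+y'}{2},\eta\right)d\eta\]
on the diagonal. To apply this to $T_i$, I would compute the kernel of $T_i$ directly from the definitions $(\varphi_i)_*u(y)=u(\varphi_i^{-1}(y))$ and $(\varphi_i)^*v(x)=v(\varphi_i(x))$, which after a change of variable yields
\[K_{T_i}(x,x')=\chi_i(x)\chi_i(x')K_b(\varphi_i(x),\varphi_i(x'))\,|\det d\varphi_i(x')|,\qquad b=(\varphi_i)_*a.\]
Integrating along the diagonal and changing variables $y=\varphi_i(x)$ cancels the Jacobian:
\[\Tr(T_i)=\int_{V_i}\chi_i^2(\varphi_i^{-1}(y))\,K_b(y,y)\,dy=\frac{1}{(2\pi h)^2}\int_{V_i\times\R^2}\chi_i^2(\varphi_i^{-1}(y))\,((\varphi_i)_*a)(y,\eta)\,dy\,d\eta.\]

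The key geometric step is to recognize that the canonical transformation on the cotangent bundle $\tilde\varphi_i:T^*U_i\to T^*V_i$, $(x,\xi)\mapsto (\varphi_i(x),(d\varphi_i(x))^{-T}\xi)$, is a symplectomorphism, and therefore its Jacobian for the Liouville measure $dx\,d\xi$ equals one. Since by definition $((\varphi_i)_*a)(y,\eta)=a(\tilde\varphi_i^{-1}(y,\eta))$, changing variables back to $T^*U_i$ gives
\[\Tr(T_i)=\frac{1}{(2\pi h)^2}\int_{T^*U_i}\chi_i^2(x)\,a(x,\xi)\,dx\,d\xi.\]
Summing over $i$ and using $\sum_i\chi_i^2\equiv 1$ yields the result, since the Liouville measure $dx\,d\xi$ on $T^*X$ reads as Lebesgue in each local trivialization.

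I do not foresee a real obstacle: the only nontrivial input beyond routine bookkeeping is the symplectic invariance of the Liouville measure under the cotangent lift, which compensates exactly the Jacobian $|\det d\varphi_i|$ coming from the change of variable on the base. All the other manipulations—kernel computations, trace-class property, and finiteness of the sum—follow from the compact support assumption on $a$.
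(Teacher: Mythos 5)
Your proof is correct and follows essentially the same route as the paper's appendix: compute the Schwartz kernel of each chart contribution, restrict to the diagonal, and use the fact that the cotangent lift of $\varphi_i$ is a symplectomorphism (so the Liouville measure absorbs the base Jacobian) before summing with $\sum_i\chi_i^2=1$. The only difference is presentational—you work in chart coordinates and cancel the Jacobian explicitly, while the paper writes the kernel intrinsically with fiber integrals over $T^*_yX$—so there is nothing to add.
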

\begin{proof}
We may assume that $a$ is real-valued.
Let $u \in \CIc(X)$. Let $x \in X$, let $i \geq 0$ be such that $x \in U_i$. Using a change of variables, the following holds:
\begin{align*}
&4h^2\pi^2(\chi_i(\varphi_i)^*((\varphi_i)_*a^w(x,hD))(\varphi_i)_*\chi_iu)(x) \\
&=\chi_i(x)\int_{U_i}{\int_{T^*_yX}{a(m_i(x,y),(\varphi_i)_{y \rightarrow m_i(x,y)}(\xi))\chi_i(y)u(y)\exp\left(\frac{i}{h}\left(\varphi_i(x)-\varphi_i(y)\right)\cdot(\varphi_i)_{*,y}(\xi)\right)\,d\xi}\,dy}\\
&=\int_{X}{f_i(x,y)u(y)\,dy},
\end{align*}

where $$m_i(x,y) = \varphi_i^{-1}\left(\frac{\varphi_i(x)+\varphi_i(y)}{2}\right),$$
$(\varphi_i)_{*,y}$ is the linear mapping from $T^*_yX$ into $\R^2$ such that the thus enhanced $\varphi_i$ is a symplectomorphism, 
$(\varphi_i)_{y \rightarrow m_i(x,y)} = (\varphi_i)_{*,m_i(x,y)}^{-1} \circ (\varphi_i)_{*,y}$. 
$$f_i(x,y) = \chi_i(x)\chi_i(y)\int_{T^*_yX}{a(m_i(x,y),(\varphi_i)_{y \rightarrow m_i(x,y)}(\xi))\exp\left(\frac{i}{h}\left(\varphi_i(x)-\varphi_i(y)\right)\cdot(\varphi_i)_{*,y}(\xi)\right)\,d\xi},$$ the measure $d\xi$ being the Liouville measure. The integral is convergent because $a$ has compact support. 

Let $F$ be the set of all indices $i$ such that the space support of $a$ meets $U_i$; $F$ is finite. For every $i \notin F$, $f_i=0$; for every $i \in F$, $f_i:(\supp\,\chi_i)^2 \rightarrow \C$ is continuous, hence bounded; thus, $F = \sum_i{f_i}$ is in $L^{\infty}(X^2)$.

Thus, the operator
$$v \in L^2(X) \longmapsto 4h^2\pi^2\Op_h(a)v - \left(x \longmapsto \int_X{F(x,y)v(y)\,dy}\right) \in L^2(X)$$ is bounded and vanishes at $u$, so vanishes on $\CIc(X)$, and we have 
$$\forall u \in L^2(X),\, 4h^2\pi^2[\Op_h(a)u](x) = \int_X{F(x,y)u(y)\,dy},$$ hence 
\begin{align*}
\Tr\left(\Op_h(a)\right) &= \frac{1}{4h^2\pi^2}\int_X{F(x,x)\,dx} = \frac{1}{4h^2\pi^2}\sum_i{\int_X{f_i(x,x)\,dx}} \\
&= \frac{1}{4h^2\pi^2}\sum_i{\int_X{\chi_i^2(x)\int_{T^*_xX}{a(x,\xi)\,d\xi}\,dx}}\\
&= \frac{1}{4h^2\pi^2}\int_{T^*X}{a(x,\xi)\,dxd\xi}.
\end{align*}
\end{proof}

\end{document}